\DeclareFontFamily{U}{matha}{\hyphenchar\font45}
\DeclareFontShape{U}{matha}{m}{n}{
      <5> <6> <7> <8> <9> <10> gen * matha
      <10.95> matha10 <12> <14.4> <17.28> <20.74> <24.88> matha12
      }{}
\DeclareSymbolFont{matha}{U}{matha}{m}{n}
\DeclareFontFamily{U}{mathx}{\hyphenchar\font45}
\DeclareFontShape{U}{mathx}{m}{n}{
      <5> <6> <7> <8> <9> <10>
      <10.95> <12> <14.4> <17.28> <20.74> <24.88>
      mathx10
      }{}
\DeclareSymbolFont{mathx}{U}{mathx}{m}{n}
\DeclareMathSymbol{\obot}         {2}{matha}{"6B}
\DeclareMathSymbol{\bigobot}       {1}{mathx}{"CB}
\newcommand{\cO}{\mathcal{O}}
\DeclareMathOperator{\Q}{\mathbf{Q}}
\DeclareMathOperator{\R}{\mathbf{R}}
\DeclareMathOperator{\Z}{\mathbf{Z}}
\DeclareMathOperator{\F}{\mathbf{F}}
\DeclareMathOperator{\tr}{tr}
\DeclareMathOperator{\rank}{rank}
\DeclareMathOperator{\Vol}{Vol}
\DeclareMathOperator{\disc}{disc}
\DeclareMathOperator{\Gal}{Gal}
\DeclareMathOperator{\SL}{SL}
\DeclareMathOperator{\diag}{diag}
\DeclareMathOperator{\Hom}{Hom}
\DeclareMathOperator{\Cl}{Cl}
\theoremstyle{plain} 
\newtheorem{theorem}{Theorem}[section] 
\newtheorem{proposition}[theorem]{Proposition}
\newtheorem{corollary}[theorem]{Corollary}
\newtheorem{lemma}[theorem]{Lemma}
\theoremstyle{definition} 
\newtheorem{definition}[theorem]{Definition} 
\newtheorem{problem}[theorem]{Problem}
\theoremstyle{remark} 
\newtheorem{remark}{Remark}
\newtheorem{notation}{Notation}
\newcounter{tasknumber}
\newcommand{\task}[2][]{%
  \addtocounter{tasknumber}{1}%
  \begin{center}%
  \framebox[1.1\width]{\begin{minipage}{0.9\textwidth}%
  \textbf{Task \arabic{tasknumber}} \textit{\if!#1(unassigned)!\else (#1)\fi}: {#2}%
  \end{minipage}}%
  \end{center}%
}
\newcounter{assumptionnumber}
\newcommand{\assumption}[2][]{%
  \addtocounter{assumptionnumber}{1}%
  \begin{center}%
  \framebox[1.1\width]{\begin{minipage}{0.9\textwidth}%
  \textbf{Assumption \arabic{assumptionnumber}} \textit{\if!#1!\else (#1)\fi}: {#2}%
  \end{minipage}}%
  \end{center}%
}
\newcommand{\authnote}[2][]{\noindent {\if!#1!  {\bf TODO} \else {\small \bf #1} \fi: #2}}
\newcommand{\mathsc}[1]{{\normalfont\textsc{#1}}}
\newcommand{\mathname}[1]{{\mathsc{#1}}}
\mathchardef\mhyphen="2D
\DeclareMathOperator{\Nrd}{Nrd}
\DeclareMathOperator{\GCD}{gcd}
\DeclareMathOperator{\Trd}{Trd}
\DeclareMathOperator{\End}{End}
\DeclareMathOperator{\length}{length}
\newcommand{\orthsum}{\obot}
\newcommand{\EndRing}{\mathname{EndRing}} 
\newcommand{\MaxOrder}{\mathname{MaxOrder}}
\newcommand{\IsoPath}{\mathname{IsogenyPath}}
\newcommand{\EllIsoPath}{{\ell\mhyphen\mathname{IsogenyPath}}}
\newcommand{\EquivIdeal}{\mathname{EquivIdeal}}
\newcommand{\QuatPath}{\mathname{QuaternionPath}}
\newcommand{\PSQuatPath}{\mathname{PS}\QuatPath}
\definecolor{NOTDISTRIBUTE}{rgb}{0.06, 0.70, 0.80} 
\begin{document}


\title[The supersingular isogeny path and endomorphism ring problems]{The supersingular isogeny path and \\endomorphism ring problems are equivalent
}
\author{Benjamin Wesolowski}
\address{Univ. Bordeaux, CNRS, Bordeaux INP, IMB, UMR 5251, F-33400, Talence, France\\
INRIA, IMB, UMR 5251, F-33400, Talence, France}
\email{benjamin.wesolowski@math.u-bordeaux.fr}
\maketitle

\begin{abstract}
We prove that the path-finding problem in $\ell$-isogeny graphs and the endomorphism ring problem for supersingular elliptic curves are equivalent under reductions of polynomial expected time, assuming the generalised Riemann hypothesis. 
The presumed hardness of these problems is foundational for isogeny-based cryptography.
As an essential tool, we develop a rigorous algorithm for the quaternion analog of the path-finding problem, building upon the heuristic method of Kohel, Lauter, Petit and Tignol. This problem, and its (previously heuristic) resolution, are both a powerful cryptanalytic tool and a building-block for cryptosystems.
\end{abstract}


\section{Introduction}

\noindent We consider two problems of foundational importance to isogeny-based cryptography, a branch of post-quantum cryptography: the endomorphism ring problem and the path-finding problem in isogeny graphs, for supersingular elliptic curves. The hardness of the first is necessary for isogeny-based cryptography to be secure~\cite{GPST16, CPV20}. Reciprocally, some cryptosystems (the earliest of which being~\cite{CGL09}) are proven secure if the second is hard. Both problems are believed to be equivalent, thereby constituting the bedrock of isogeny-based cryptography. However, known reductions rely on a variety of heuristic assumptions~\cite{PL17, EHM17, EHLMP18}. 
To arithmeticians, the endomorphism ring problem is simply the computational incarnation of the Deuring correspondence~\cite{Deuring41}. This arithmetic theory met graph theory in the work of Mestre~\cite{Mestre86} and Pizer~\cite{Pizer90}, and the related computational questions have been studied since~\cite{Kohel96}, yet the literature still heavily relies on heuristics.

This paper aims for a rigorous study of these 
problems
from the generalised Riemann hypothesis (henceforth, GRH). As tools, we develop a rigorous algorithm to solve norm equations in quaternion algebras, and a rigorous variant of the heuristic algorithm from~\cite{KLPT14} for the quaternion analog of the path-finding problem, overcoming obstacles previously deemed ``beyond the reach of existing analytic number theory techniques''~\cite{GPS20}.
As an application we prove that the path-finding problem in $\ell$-isogeny graphs and the endomorphism ring problem for supersingular elliptic curves are equivalent under reductions of polynomial expected time. 

\subsection{Hard problems for isogeny-based cryptography}

The first isogeny-based cryptosystems were proposed by Couveignes in 1997~\cite{Cou06}. This work was only made public in 2006, when the idea reemerged in~\cite{CGL09}. The latter introduced the path-finding problem in supersingular $\ell$-isogeny graphs as a possible hard problem upon which cryptosystems can be constructed.

To any primes $p$ and $\ell$ are associated a so-called \emph{supersingular $\ell$-isogeny graph}. It is  a regular graph of degree $\ell+1$ and counting approximately $p/12$ vertices. Each vertex of the graph is a \emph{supersingular elliptic curve}, and edges correspond to \emph{$\ell$-isogenies} between them (a particular kind of morphisms between elliptic curves). Most importantly, these graphs are Ramanujan, i.e., optimal expander graphs. This implies that random walks quickly reach the uniform distribution. Starting from an elliptic curve $E$, one can compute a chain of random $\ell$-isogenies until the endpoint $E'$ is uniformly distributed. Then, given only $E$ and $E'$, it seems hard to recover a path connecting them. This is the key of the preimage-resistant CGL hash function~\cite{CGL09}, and the first of our problems of interest.
\begin{problem}[$\EllIsoPath$]
Given a prime $p$, and two supersingular elliptic curves $E$ and $E'$ over $\F_{p^2}$, find a path from $E$ to $E'$ in the $\ell$-isogeny graph. 
\end{problem}

Isogeny-based cryptography has since grown considerably, when Jao and De Feo~\cite{JF11} noticed that it allows to build ``post-quantum'' cryptosystems, supposed to resist an adversary equipped with a quantum computer. There is today a wealth of other public-key protocols~\cite{CLMPR18,DGKPS19,Cos20} (including a Round 3 candidate~\cite{SIKE17} for NIST's standardisation effort), signature schemes \cite{BKV19, DFG19, GPS20, DFKLPW20} or other cryptosystems~\cite{DFMPS19, BKW20} built on the presumed hardness of finding isogenies connecting supersingular elliptic curves.

The precise relation between the security of these schemes and the supposedly hard problem $\EllIsoPath$ is a critical question. Some of these schemes, like~\cite{CGL09} or~\cite{GPS20}, are known to be secure if finding isogeny paths is hard. The reciprocal has been unclear: if one can solve $\EllIsoPath$ efficiently, is all of isogeny-based cryptography broken? The first element of response was discovered in~\cite{GPST16} by taking a detour through another problem. They prove that an efficient algorithm to solve the closely related \emph{endomorphism ring problem} allows to break the Jao--De Feo key exchange, and essentially all schemes of this type (see~\cite{FKM21}). Similarly, it was proven in~\cite{CPV20} that the security of CSIDH~\cite{CLMPR18} and its variants (an \emph{a priori} very different family of cryptosystems) also reduces to the endomorphism ring problem, via a sub-exponential reduction.

Given an elliptic curve $E$, an endomorphism is an isogeny $\varphi : E \rightarrow E$  from $E$ to itself. The set of all endomorphisms of $E$, written $\End(E)$, is a ring, where the addition is pointwise and multiplication is given by composition. Loops in $\ell$-isogeny graphs provide endomorphisms, hence the connection between path-finding problems and computing endomorphism rings. Since the curves considered are supersingular, the endomorphism rings are always generated by four elements (as a lattice), and they are isomorphic to certain subrings of a quaternion algebra $B_{p,\infty}$, called \emph{maximal orders}.
The problem of computing the endomorphism ring comes in two flavours. The first actually looks for endomorphisms.

\begin{problem}[$\EndRing$]
Given a prime $p$, and a supersingular elliptic curves $E$ over $\F_{p^2}$, find four endomorphisms of $E$ (in an efficient representation) that generate $\End(E)$ as a lattice.
\end{problem}

By an efficient representation for endomorphisms $\alpha$, we mean that there is an algorithm to evaluate $\alpha(P)$ for any $P \in E(\F_{p^k})$ in time polynomial in the length of the representation of $\alpha$ and in $k\log(p)$. We also assume that an efficient representation of $\alpha$ has length $\Omega(\log (\deg(\alpha)))$. The second version asks for an abstract description of $\End(E)$.

\begin{problem}[$\MaxOrder$]
Given a prime $p$, and a supersingular elliptic curves $E$ over $\F_{p^2}$, find four quaternions in $B_{p,\infty}$ that generate a maximal order $\cO$ such that $\cO \cong \End(E)$.
\end{problem}

Neither of them clearly reduces to the other, and in~\cite{GPST16}, it is only proven that solving both simultaneously allows to break cryptosystems. 
Many works have been studying the three problems $\EllIsoPath$, $\EndRing$ and $\MaxOrder$, as early as~\cite{Kohel96}, originally motivated by the importance of these structures in arithmetic geometry. With the increasing practical impact of these problems, it has become critical to understand their relations. It was shown in~\cite{EHLMP18} that, under several heuristic assumptions, all three appear to be equivalent.

\subsection{Contributions}
We prove that the problems $\EllIsoPath$, $\EndRing$ and $\MaxOrder$ are equivalent under reductions of polynomial expected time, assuming the generalised Riemann hypothesis. In doing so, we develop new tools for a rigorous study of these problems.

Most importantly, we develop a new, rigorous variant of the heuristic algorithm of~\cite{KLPT14} for $\QuatPath$, a quaternion analog of $\EllIsoPath$. This algorithm (and its variants) is a crucial component of the reductions, but is also a powerful cryptanalytic tool~\cite{GPST16} and a building-block for cryptosystems~\cite{DGKPS19,GPS20,DFKLPW20}. More precisely, we solve in polynomial time the following problem for very flexible choices of $\mathcal N$, including the most important variants $\ell$-$\QuatPath$ and $B$-$\PSQuatPath$.

\begin{problem}[$\QuatPath$]
Given two maximal orders $\cO_1$ and $\cO_2$ in $B_{p,\infty}$ and a set $\mathcal N$ of positive integers,
find a left $\cO_1$-ideal $I$ such that $\Nrd(I)\in \mathcal N$ and $\cO_R(I) \cong \cO_2$ (definitions provided in Section~\ref{sec:introquat}). 
If $\mathcal N$ is the set of powers of a prime $\ell$, we call the corresponding problem $\ell$-$\QuatPath$.
If $\mathcal N$ is the set of $B$-powersmooth integers for some $B>0$, we call the corresponding problem $B$-$\PSQuatPath$.
\end{problem}

The design and analysis of this new algorithm spans several sections of the present article.
\begin{itemize}
\item In Section~\ref{sec:qfprimesampling}, we combine some algorithmic considerations in euclidean lattices and the Chebotarev density theorem to prove that given an ideal in a maximal order, one can efficiently find an equivalent prime ideal (Theorem~\ref{thm:algEquivPrimeIdeal}). This serves as a preconditioning step in our algorithm, and has a heuristic analog in~\cite{KLPT14}. 
\item In Section~\ref{sec:analysis}, we prove bounds in the number of ways to represent an integer $n$ as a linear combination of a prime and a quadratic form. This is a generalisation of a classic problem of Hardy and Littlewood~\cite{HL23} on representing integers as $p + x^2 + y^2$. The proof resorts to analytic number theory, and the result, Theorem~\ref{thm:estimationofsumSi}, unlocks the analysis of  algorithms to solve certain diophantine equations in the following section.
\item In Section~\ref{sec:solving}, we design and analyse an algorithm (Theorem~\ref{thm:solvingthecomplicatedequation}) to find integral solutions $(s,t,x,y)$ of equations of the form
$$\det(\gamma)^2f(s,t) + bf^\gamma(x,y) = n,$$
where $n$ and $b$ are positive integers, $f$ is a positive definite, integral, binary quadratic form, and $\gamma$ is a $2\times 2$ integral matrix. The key allowing a rigorous analysis is to randomise the class of $f^\gamma$ within its genus using random walks, and apply the results of the previous section. As a first application, we use this algorithm to solve norm equations in special maximal orders in Corollary~\ref{coro:repintspecialorder}.
\item Finally, we piece everything together in Section~\ref{sec:newKLPT}, solving $\QuatPath$ in Theorem~\ref{thm:rigorousklpt}. The power-of-$\ell$ case is an immediate consequence, and we specialise to the powersmooth case in Theorem~\ref{thm:powersmoothclassrep}.
\end{itemize}
Note that our efforts are focused on obtaining rigorous, polynomial-time algorithms, with little consideration for practical efficiency, hence we spend little energy on calculating or optimising the hidden constants. A fast implementation should certainly follow the heuristic algorithm~\cite{KLPT14}, only resorting to our rigorous variant when unexpected obstructions are encountered. \\

This new algorithm at hands, we then tackle the various reductions between $\EllIsoPath$, $\EndRing$ and $\MaxOrder$.
They are similar to heuristic methods from the literature, and notably~\cite{EHLMP18}, with a number of substantial differences that allow a rigorous analysis. Note that our chain of reductions has a different structure from~\cite{EHLMP18}.
\begin{itemize}
\item We start in Section~\ref{sec:MOequivPath} by proving that $\EllIsoPath$ and $\MaxOrder$ are equivalent. To do so, we adapt previous heuristic methods, essentially replacing their reliance on~\cite{KLPT14} with the new rigorous variants. In particular, we prove that there is a polynomial time algorithm to convert certain ideals of prime power norm into isogenies.
\item Finally, we prove in Section~\ref{sec:MOequivEnd} that $\MaxOrder$ and $\EndRing$ are equivalent. The reduction from $\EndRing$ to $\MaxOrder$ is essentially the same as the heuristic reduction from~\cite{EHLMP18}, adapted to our new rigorous tools. The converse requires more work: the reduction from $\MaxOrder$ to $\EndRing$ in~\cite{EHLMP18} encounters several large random numbers which are hoped to be easy to factor with good probability. We propose a strategy that provably avoids hard factorisations, exploiting the tools developed in Section~\ref{sec:qfprimesampling}.
%
\end{itemize}
Note that we do not \emph{a priori} restrict the size of solutions to the three problems; however, our reductions polynomially preserve bounds on the output size. In particular, all reductions preserve the property of having a polynomially bounded output size, a requirement in~\cite{EHLMP18}. This allows the reductions to be more versatile, and apply for instance if one discovers an algorithm that solves $\EllIsoPath$ with paths of superpolynomial length.

\subsection{Notation} 
The statements $f = O(g)$, $f \ll g$ and $g = \Omega(f)$ are synonymous, where $O$ is the classic big O notation. We write $O_\varepsilon$ to signify that the hidden constants depend on $\varepsilon$. We denote by $\Z$, $\Z_{>0}$, and $\Q$ the ring of integers, the set of positive integers, and the field of rational numbers. For any prime power $q$, we denote by $\F_q$ the finite field with $q$ elements. The function $\log$ denotes the natural logarithm. The size of a set $S$ is denoted by $\#S$. If $a$ and $b$ are two integers, the greatest common divisor of $a$ and $b$ is written $\GCD(a,b)$. We write $a \mid b$ if $a$ divides $b$, or $a \mid b^\infty$ if all prime factors of $a$ divide $b$, or $a \mid\mid b$ if $a \mid b$ and $\GCD(a,b/a) = 1$. The number of divisors of $a$ is denoted by $\tau(a)$, and the number of prime divisors by $\omega(a)$, and Euler's totient is $\phi(a)$. If $R$ is a ring and $n$ a positive integer, $M_{n\times n}(R)$ is the ring of $n\times n$ matrices with coefficients in $R$. All statements containing the mention (GRH) assume the generalised Riemann hypothesis.

\section{Preliminaries}

\subsection{Quadratic forms}
We will extensively use the theory of quadratic forms; the reader can find more details on the theory, with a computational perspective, in~\cite{Cohen13}.
A quadratic form of dimension $r$ is a polynomial in $r$ variables whose terms all have degree $2$.
A quadratic form $f(x)$ in the variable $x = (x_1,\dots,x_r)$ is determined by its \emph{Gram matrix} $G = (g_{ij})$, a symmetric $r\times r$ matrix such that
$$f(x) = x^tGx = \sum_i g_{ii}x_i^2 + 2\sum_i\sum_{j>i}g_{ij}x_ix_j.$$
For computational purposes, we assume that quadratic forms are represented as their Gram matrix, and we let $\length(f)$ be the total binary length of its coefficients.
The form is \emph{integral} if $f(x) \in \Z$ for any $x \in \Z^r$, or equivalently, if $g_{ij} \in \frac 1 2 \Z$ and $g_{ii} \in \Z$. 
If $f$ is integral and $n\in \Z$, we say that $f$ represents $n$ if there exists $x\in\Z^r$ such that $f(x) = n$.
The form $f$ is \emph{definite} if $f(x) = 0$ implies $x = 0$, and it is \emph{positive} if $f(x) \geq 0$ for all $x$.
It is \emph{primitive} if the greatest common divisor of all integers represented by $f$ is $1$. It is \emph{binary} if $r = 2$.
The \emph{discriminant} of $f$ is
\begin{equation*}
\disc(f) = \begin{cases}
(-1)^{\frac r 2}\det(2G) \text{ if $r$ is even,}\\
\frac 1 2 (-1)^{\frac {r+1} 2}\det(2G) \text{ if $r$ is odd.}\\
\end{cases}
\end{equation*}
To any quadratic form $f$ is associated a symmetric bilinear form
$$\langle x,y \rangle_f = \frac{1}{2}(f(x+y) - f(x) - f(y)).$$
Given the bilinear form, one can recover the Gram matrix as $g_{ij} = \langle e_i, e_j \rangle_f$, where $(e_i)_{i=1}^r$ is the canonical basis.
If $\gamma \in M_{r\times r}(\Q)$, let $f^\gamma$ be the quadratic form defined by $f^\gamma(x) = f(\gamma x)$, with Gram matrix $\gamma^t G \gamma$.
A \emph{quadratic space} $V$ is a $\Q$-vector space of finite dimension together with a \emph{quadratic map} $q : V \rightarrow \Q$ such that for any (hence all) basis $(b_i)_{i = 1}^r$ of $V$, we have that $q\left(\sum_i x_i b_i\right)$ is a quadratic form in $x$. A \emph{lattice} is a full-rank $\Z$-submodule in a positive definite quadratic space. The discriminant of a lattice is the discriminant of the quadratic form induced by any of its bases.
Any positive definite $f$ induces a lattice structure on $\Z^r$, via the canonical basis. The geometric invariants of this lattice induce invariants of $f$.
The \emph{volume} of $f$ is $\Vol(f) = |\det(G)|^{1/2}$. The \emph{covering radius} $\mu(f)$ is the smallest $\mu$ such that for any $y\in\R^r$, we have $\min_{x\in\Z^r}f(x-y) \leq \mu^2$. We will use the following bound.

\begin{lemma}\label{lem:boundoncoveringradius}
If $f$ is integral, then $\mu(f) \leq \frac{1}{2}r^{1/2}\gamma_r^{ r/2}\Vol(f)$, where $\gamma_r$ is Hermite's constant.
\end{lemma}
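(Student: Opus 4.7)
I would combine the nearest-plane upper bound on the covering radius with a Minkowski-style iteratively reduced basis. Fix any basis $b_1, \ldots, b_r$ of the lattice $L = (\Z^r, f)$ with Gram–Schmidt orthogonalisation $b_1^*, \ldots, b_r^*$ (with respect to the bilinear form $\langle\cdot,\cdot\rangle_f$). Then Babai's nearest-plane algorithm produces, for any $y \in \R^r$, an $x \in \Z^r$ with $f(y-x) \leq \tfrac14\sum_i\|b_i^*\|_f^2$; hence
$$\mu(f)^2 \leq \tfrac14\sum_{i=1}^r\|b_i^*\|_f^2.$$
It thus suffices to exhibit a basis with $\sum_i\|b_i^*\|_f^2 \leq r\,\gamma_r^r\,\Vol(f)^2$.

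\textbf{Basis construction and the role of integrality.} I would build $b_1, \ldots, b_r$ recursively: take $b_1$ to be a shortest nonzero vector of $L$; once $b_1, \ldots, b_{i-1}$ are chosen, the orthogonal projection of $\Z^r$ onto $\mathrm{span}_\R(b_1, \ldots, b_{i-1})^\perp$ is a rank-$(r-i+1)$ lattice $L_i$ of volume $\Vol(f)/\prod_{j<i}\|b_j^*\|_f$, and I take $b_i \in \Z^r$ so that $b_i^*$ is a shortest nonzero vector of $L_i$. Applying Hermite's inequality to each $L_i$ yields
$$\|b_i^*\|_f^2 \;\leq\; \gamma_{r-i+1}\Bigl(\Vol(f)\Big/\prod_{j<i}\|b_j^*\|_f\Bigr)^{2/(r-i+1)}.$$
Integrality of $f$ contributes two facts. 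First, every nonzero $v \in \Z^r$ satisfies $f(v) \geq 1$, so $\lambda_1(f) \geq 1$; second, Hermite's inequality applied to $L$ itself then forces $\Vol(f) \geq \gamma_r^{-r/2}$. Combined with the identity $\prod_i\|b_i^*\|_f = \Vol(f)$ and the monotonicity $\gamma_s \leq \gamma_r$ for $s \leq r$, this lower bound on $\Vol(f)$ can be used to dominate each $\|b_i^*\|_f^2$ by $\gamma_r^r\Vol(f)^2$. Summing and inserting into the nearest-plane bound yields $\mu(f) \leq \tfrac12\,r^{1/2}\gamma_r^{r/2}\Vol(f)$.

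\textbf{Main obstacle.} The delicate point is propagating the integrality-based lower bound $\Vol(f) \geq \gamma_r^{-r/2}$ through the recursion. While $\|b_1\|_f \geq 1$ is immediate from integrality, for $i \geq 2$ the Gram–Schmidt norms $\|b_i^*\|_f$ need not exceed $1$ — projections of integral lattices are not integral in general, as already visible in the hexagonal lattice where $s_2 = \sqrt 3/2 < 1$. So the bound on each $\|b_i^*\|_f^2$ cannot be extracted term-by-term from integrality, but must instead follow from the \emph{global} volume identity $\prod_i\|b_i^*\|_f = \Vol(f)$ combined with the Hermite estimates at each level. Arranging the fractional exponents and the $\gamma_s \leq \gamma_r$ monotonicity so that everything telescopes cleanly into the single factor $\gamma_r^r\Vol(f)^2$, rather than a weaker constant depending on $r$, is where the real work lies.
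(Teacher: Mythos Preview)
Your proposal has a real gap precisely where you locate the ``main obstacle'': the telescoping does not close. From the Hermite estimates $\|b_i^*\|^2 \leq \gamma_{r-i+1}\,V_i^{2/(r-i+1)}$ and the global identity $\prod_i\|b_i^*\| = \Vol(f)$ you only ever get \emph{upper} bounds on the $\|b_i^*\|$. To bound $V_i = \Vol(f)/\prod_{j<i}\|b_j^*\|$ from above for $i\geq 3$ you would need \emph{lower} bounds on $\|b_2^*\|,\dots,\|b_{i-1}^*\|$, and, as you yourself note, these projected minima can be strictly below $1$. The volume lower bound $\Vol(f)\geq\gamma_r^{-r/2}$ coming from integrality controls the product of the $\|b_j^*\|$, not the individual tails $\prod_{j<i}\|b_j^*\|$; with a fixed product, a single term can still be arbitrarily large while the others are tiny. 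So the step ``dominate each $\|b_i^*\|^2$ by $\gamma_r^r\Vol(f)^2$'' is unsupported as written.

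The paper's argument avoids this entirely by working with successive minima rather than Gram--Schmidt norms. It uses the classical bound $\mu(f)\leq\tfrac{\sqrt r}{2}\lambda_r$, then Minkowski's second theorem $\prod_i\lambda_i\leq\gamma_r^{r/2}\Vol(f)$ together with the integrality consequence $\lambda_i\geq 1$ for every $i$, which immediately gives $\lambda_r\leq\gamma_r^{r/2}\Vol(f)$. Note that integrality is used to bound \emph{all} the $\lambda_i$ from below, not just $\lambda_1$ --- this is exactly the kind of uniform lower bound your Gram--Schmidt setup lacks. If you want to salvage your approach, the missing link is the standard fact that for a KZ basis $\|b_i^*\|\leq\lambda_i$; inserting this and then invoking Minkowski's second theorem reproduces the paper's bound, but at that point you have simply rederived the paper's argument with an extra detour through Babai.
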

\begin{proof}
Let $\lambda_i$ be the successive minima of $f$.
We have $\frac 1 2 \lambda_r \leq \mu(f) \leq \frac{r^{1/2}}{2} \lambda_r$.
By Minkowski's second theorem,
$\prod_{i = 1}^r\lambda_i \leq \gamma_r^{ r/2}\Vol(f),$
and since $f$ is integral, $\lambda_i \geq 1$, hence
$\lambda_r \leq \gamma_r^{ r/2}\Vol(f).$
\end{proof}


\subsection{Quaternion algebras}\label{sec:introquat}
An algebra $B$ is a \emph{quaternion algebra over $\Q$} if there exist $a,b\in \Q^\times$ and $i,j\in B$ such that $(1,i,j,ij)$ is a $\Q$-basis for $B$ and
$$i^2 = a,\ \ \ j^2 = b,\ \ \ \text{and }ji = -ij.$$
Given $a$ and $b$, the corresponding algebra is denoted by $\left(\frac{a,b}{\Q}\right)$.
Write an arbitrary element of $B$ as $\alpha = x_1 + x_2 i + x_3 j + x_4 ij$ with $x_i \in \Q$.
The quaternion algebra $B$ has a canonical involution $\alpha \mapsto \overline{\alpha} = x_1 - x_2 i - x_3 j - x_4 ij$.
It induces the \emph{reduced trace}
and the \emph{reduced norm}
$$\Trd(\alpha) = \alpha + \overline{\alpha} = 2x_1,\ \ \ \Nrd(\alpha) = \alpha \overline{\alpha} = x_1^2 - ax_2^2 - bx_3^2 + abx_4^2.$$
The latter is a quadratic map, which makes $B$ a quadratic space, and endows its $\Z$-submodules with a lattice structure. The corresponding bilinear form is
$$\langle \alpha, \beta\rangle = \frac{1}{2}\left(\alpha\overline{\beta} + {\beta}\overline{\alpha}\right).$$
If $\Lambda$ is a full-rank lattice in $B$, the \emph{reduced norm} of $\Lambda$ is $\Nrd(\Lambda) = \gcd\left( \Nrd(\alpha) \mid \alpha \in \Lambda \right)$. We associate to $\Lambda$ the normalised quadratic map
$$q_\Lambda : \Lambda \longrightarrow \Z : \lambda \longmapsto \frac{\Nrd(\lambda)}{\Nrd(\Lambda)}.$$
An \emph{order} $\cO$ in $B$ is a full-rank lattice that is also a subring. It is \emph{maximal} if it is not contained in any other order. For any lattice $\Lambda \subset B$, we define the \emph{left order of $\Lambda$} and the \emph{right order of $\Lambda$} as
\begin{align*}
\cO_L(\Lambda) &= \left\{\alpha \in B \mid \alpha \Lambda  \subseteq \Lambda \right\},\ \ \ \text{and }
\cO_R(\Lambda) = \left\{\alpha \in B \mid \Lambda \alpha \subseteq \Lambda \right\}.
\end{align*} 
If $\cO$ is a maximal order, and $I$ is a left ideal in $\cO$, then $\cO_L(I) = \cO$ and $\cO_R(I)$ is another maximal order. Given two maximal orders $\cO_1$ and $\cO_2$, their \emph{connecting ideal} is the ideal
$$I(\cO_1,\cO_2) = \{\alpha \in B \mid \alpha \cO_2 \overline \alpha \subseteq [\cO_2:\cO_1\cap \cO_2]\cO_1\},$$
which satisfies $\cO_L(I) = \cO_1$ and $\cO_R(I) = \cO_2$.

Let $\cO$ be a maximal order. Two left $\cO$-ideals $I$ and $J$ are \emph{equivalent} if there exists $\alpha \in B$ such that $I = \alpha J$. The set of classes for this equivalence relation is the \emph{(left) ideal class set} of $\cO$, written $\mathrm{Cls}(\cO)$. The class of $I$ is written $[I]$.
\\

To any prime number $p$, one associates a quaternion algebra $B_{p,\infty}$. In algebraic terms, $B_{p,\infty}$ is defined as the unique quaternion algebra over $\Q$ ramified exactly at $p$ and $\infty$. Explicitly, it is given by the following lemma, from~\cite{Pizer80}.
\begin{lemma}\label{lem:quaternionpq}
Let $p > 2$ be a prime. Then, $B_{p,\infty} = \left(\frac{-q,-p}{\Q}\right)$, where
$$q = \begin{cases}
1&\text{if } p \equiv 3 \bmod 4,\\
2&\text{if } p \equiv 5 \bmod 8,\\
q_p&\text{if } p \equiv 1 \bmod 8,\\
\end{cases}$$
where $q_p$ is the smallest prime such that $q_p \equiv 3 \bmod 4$ and $\left(\frac p {q_p}\right) = -1$.
Assuming GRH, we have $q_p = O((\log p)^2)$, which can thus be computed in polynomial time in $\log p$.
\end{lemma}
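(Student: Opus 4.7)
The plan splits along the natural division of the statement: first, establish the algebraic identification $B_{p,\infty} \cong \left(\frac{-q,-p}{\Q}\right)$, and then prove the polynomial bound on $q_p$ under GRH.

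For the algebraic identification, the standard strategy is to recall that a quaternion algebra over $\Q$ is determined up to isomorphism by its (finite, even) set of ramified places, and then to verify that $\left(\frac{-q,-p}{\Q}\right)$ ramifies exactly at $p$ and $\infty$. The infinite place is easy: since $-q,-p<0$, the Hilbert symbol $(-q,-p)_\infty = -1$. At a finite prime $\ell \notin \{2,q,p\}$, both $-q$ and $-p$ are units in $\Z_\ell$, so $(-q,-p)_\ell=1$. The real work is computing the symbols at $\ell \in \{2,q,p\}$, and here I would split into the three cases based on $p \bmod 8$, using standard formulas for the Hilbert symbol over $\Q_2$, quadratic reciprocity at $\ell = p$, and the defining condition $\left(\frac{p}{q_p}\right) = -1$ in the third case to ensure $(-q,-p)_{q_p}=1$. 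These are the case computations already carried out by Pizer~\cite{Pizer80}, which I would cite rather than grind through; the main sanity check is that in each case exactly two places ramify.

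For the effective bound on $q_p$ under GRH, I would realise the defining conditions of $q_p$ as a Chebotarev condition in a Galois extension of $\Q$ of bounded degree and polynomial-size discriminant. When $p\equiv 1 \bmod 8$, set $L = \Q(i,\sqrt{p})$, a biquadratic extension with $\Gal(L/\Q)\cong (\Z/2\Z)^2$ and discriminant bounded polynomially in $p$. A prime $\ell$ unramified in $L$ satisfies $\ell \equiv 3 \bmod 4$ iff its Frobenius restricts to the nontrivial element of $\Gal(\Q(i)/\Q)$, and (using quadratic reciprocity, since $p\equiv 1 \bmod 4$) $\left(\frac{p}{\ell}\right) = -1$ iff its Frobenius restricts to the nontrivial element of $\Gal(\Q(\sqrt{p})/\Q)$. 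Thus the two conditions pick out a single element of $\Gal(L/\Q)$, and $q_p$ is the least unramified prime with this Frobenius class.

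The conclusion then follows from the effective Chebotarev density theorem under GRH (Lagarias--Odlyzko, as sharpened by Oesterlé), which provides an unramified prime in any given Frobenius class of size $O((\log |\disc(L)|)^2)$ with an absolute implied constant. Since $\log|\disc(L)| = O(\log p)$, we obtain $q_p = O((\log p)^2)$. Computing $q_p$ in polynomial time is then immediate: enumerate primes $q \leq c(\log p)^2$ and test the two congruence conditions, both decidable in time polynomial in $\log p$. The main technical point to get right is the Chebotarev input: making sure the Galois group and its conjugacy classes are correctly matched to the arithmetic conditions, and that the discriminant bound is genuinely $\poly(p)$ rather than something larger hidden in the biquadratic compositum.
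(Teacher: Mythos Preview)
Your proposal is correct and matches the standard argument underlying the paper's citations. The paper does not actually prove this lemma: it simply attributes the algebraic identification to Pizer~\cite{Pizer80} and implicitly relies on the Lagarias--Odlyzko effective Chebotarev theorem~\cite{LO77} (as sharpened by Bach/Oesterl\'e) for the bound $q_p = O((\log p)^2)$, which is precisely the biquadratic-field argument you outline.
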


For a given quaternion algebra, the defining pair $(a,b)$ is not unique. However, in the rest of this article, the algebra $B_{p,\infty}$ will always be associated to the pair $(-q,-p)$ given in Lemma~\ref{lem:quaternionpq}, and the induced basis $(1,i,j,ij)$.
For each $p$, we distinguish a maximal order $\cO_0$ in $B_{p,\infty}$, and a useful suborder $R+Rj$ in the following lemma. This order $\cO_0$ will be reffered to as the \emph{special maximal order} of $B_{p,\infty}$.

\begin{lemma}\label{lem:specialorders}
For any $p>2$, the quaternion algebra $B_{p,\infty}$ contains the maximal order 
$$\cO_0 = \begin{cases}
\left\langle 1, i, \frac{i+ij}{2}, \frac {1+j}{2}\right\rangle &\text{if } p \equiv 3 \bmod 4,\\
\left\langle 1, i, \frac{2-i+ij}{4}, \frac {-1+i+j}{2}\right\rangle &\text{if } p \equiv 5 \bmod 8,\\
\left\langle \frac{1+i}{2}, \frac{j+ij}{2}, \frac {i+cij}{q}, ij\right\rangle &\text{if } p \equiv 1 \bmod 8,\\
\end{cases}$$
where in the last case $c$ is an integer such that $q \mid c^2p+1$. Assuming GRH, the maximal order $\cO_0$ contains the suborder 
$R + Rj$
with index $O((\log p)^2)$, where $R$ is the ring of integers of ${\Q(i)}$. If $\omega$ is a reduced generator of $R$, then
$$\Nrd(s+t\omega + xj +y\omega j) = f(s,t) + p f(x,y),$$
where $f$ is a principal, primitive, positive definite, integral binary quadratic form of discriminant $\disc(\Q(i)) = O((\log p)^2)$.
\end{lemma}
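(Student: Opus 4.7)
The plan is to verify three claims separately by direct case analysis on $p\bmod 8$: (i) $\cO_0$ is a maximal order in $B_{p,\infty}$; (ii) $R+Rj\subseteq\cO_0$ with the stated index; (iii) the norm formula and the properties of $f$. Only the index bound in case $p\equiv 1\bmod 8$ and the discriminant bound on $f$ require GRH, both solely through Lemma~\ref{lem:quaternionpq}.

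For (i), closure of $\cO_0$ under multiplication is a finite collection of identities in each case, each relying on the appropriate congruence on $p$. The most delicate case is $p\equiv 1\bmod 8$: a direct calculation using $i^2=-q$, $j^2=-p$, $ji=-ij$ yields $(i+c\cdot ij)^2 = -q(1+c^2 p)$, so $\bigl((i+c\cdot ij)/q\bigr)^2 = -(1+c^2 p)/q$, which is an integer precisely because $q\mid c^2 p+1$; the same congruence handles the remaining products involving $(i+c\cdot ij)/q$. Once $\cO_0$ is an order, maximality is obtained by computing its reduced discriminant on the given $\Z$-basis and checking it equals $p$, which is the reduced discriminant of $B_{p,\infty}$.

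For (ii), $R+Rj$ is a subring: using $j\alpha=\overline{\alpha}j$ for $\alpha\in\Q(i)$ and $j^2=-p$,
\[
(u+vj)(u'+v'j) = (uu' - p\,v\overline{v'}) + (uv' + v\overline{u'})j \in R + Rj.
\]
The inclusion $R+Rj\subseteq\cO_0$ is verified by writing each of $1,\omega,j,\omega j$ as an integer combination of the listed $\cO_0$-generators. The index is then the absolute value of the ratio of the two basis determinants expressed in the $\Q$-basis $(1,i,j,ij)$; a short computation yields $4$, $8$, and $q=q_p$ in the three respective cases, with the third being $O((\log p)^2)$ under GRH by Lemma~\ref{lem:quaternionpq}.

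For (iii), writing $\alpha = u + vj$ with $u = s+t\omega$ and $v = x+y\omega$, the identities $j\overline{u} = uj$ (Galois conjugation in $\Q(i)$) and $j^2=-p$ yield
\[
\Nrd(\alpha) = (u+vj)(\overline{u} - vj) = u\overline{u} + p\,v\overline{v} = N_{\Q(i)/\Q}(u) + p\,N_{\Q(i)/\Q}(v),
\]
and setting $f(s,t) := N_{\Q(i)/\Q}(s+t\omega)$ gives the displayed formula. As the norm form of the ring $R$, $f$ is integral, positive definite, and primitive, and since $f(1,0)=1$ its class is the principal class of its discriminant. That discriminant equals $\disc(\Q(i))\in\{-4,-8,-q_p\}$, which is $O((\log p)^2)$ under GRH by Lemma~\ref{lem:quaternionpq}. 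The main obstacle is the maximality verification for $p\equiv 1\bmod 8$, where both the multiplicative closure and the discriminant computation genuinely depend on $q\mid c^2 p+1$; the remaining cases and parts (ii), (iii) reduce to linear algebra and standard manipulations in $\Q(i)$.
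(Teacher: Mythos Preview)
Your proposal is correct. The paper itself does not give a proof: it simply records that the lemma summarises \cite[Section~2.2]{KLPT14}, which in turn relies on \cite{Pizer80} and \cite{LO77}. Your route---direct case-by-case verification of closure, discriminant computation for maximality, explicit index calculation ($4$, $8$, $q_p$), and the norm identity via $j\alpha=\overline{\alpha}j$ for $\alpha\in\Q(i)$---is more self-contained than the paper's citation and checks out line by line (in particular your computation $((i+cij)/q)^2=-(1+c^2p)/q$ is exactly what makes the $p\equiv 1\bmod 8$ case close under multiplication, and your index values agree with the covolume ratios). The only content the references supply beyond what you wrote is the discriminant criterion for maximality, which you invoke correctly.
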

\begin{proof}
This lemma summarises~\cite[Section~2.2]{KLPT14}, itself based on~\cite{Pizer80} and~\cite{LO77}.
\end{proof}

If $\cO$ is any maximal order in $B_{p,\infty}$, then $\disc(\cO) = p^2$. In fact, for any left $\cO$-ideal $I$, we have $\#(\cO/I) = \Nrd(I)^2$ and the normalised quadratic map $q_I$ has discriminant $p^2$. The following lemma tells us that the integers represented by $q_I$ are the norms of ideals equivalent to $I$.

\begin{lemma}[\protect{\cite[Lemma~5]{KLPT14}}]
Let $I$ be a left $\cO$-ideal, and $\alpha \in I$. Then, $I\overline{\alpha}/\Nrd(I)$ is an equivalent left $\cO$-ideal of norm $q_I(\alpha)$.
\end{lemma}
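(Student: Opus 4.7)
The plan is to set $\beta = \overline{\alpha}/\Nrd(I) \in B^\times$ (the case $\alpha = 0$ being trivial) and rewrite the claim as three assertions about $J = I\beta$: (i) $J$ is an integral left $\cO$-ideal; (ii) $[J] = [I]$; (iii) $\Nrd(J) = q_I(\alpha)$.

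For (i), I would invoke the classical invertibility identity $I\overline{I} = \Nrd(I)\cO$, valid because every lattice in $B_{p,\infty}$ whose left order is maximal is invertible with two-sided inverse $\overline{I}/\Nrd(I)$. Since $\overline{\alpha} \in \overline{I}$, this gives $I\overline{\alpha} \subseteq I\overline{I} = \Nrd(I)\cO$, hence $J \subseteq \cO$. Left $\cO$-stability is immediate: $\cO J = (\cO I)\beta = I\beta = J$. Full rank follows because right multiplication by the nonzero quaternion $\beta$ is a $\Q$-linear automorphism of $B$, which preserves the $\Z$-rank of any lattice.

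Assertion (ii) is automatic from the construction: $J = I\beta$ differs from $I$ by right multiplication by an element of $B^\times$, which is precisely the defining move inside a left-ideal class. For (iii), I would compute directly from the gcd definition of reduced norm:
\[
\Nrd(J) = \gcd\{\Nrd(x\beta) : x \in I\} = \Nrd(\beta)\,\gcd\{\Nrd(x) : x \in I\} = \Nrd(\beta)\Nrd(I),
\]
which simplifies to $\Nrd(\alpha)/\Nrd(I) = q_I(\alpha)$. Pulling $\Nrd(\beta)$ outside the gcd is legitimate because the integers $\Nrd(x)/\Nrd(I)$ for $x \in I$ have gcd $1$ by definition, and because $\Nrd(\beta)\Nrd(I) = \Nrd(\alpha)/\Nrd(I)$ is itself a positive integer, a consequence of $\alpha\overline{\alpha} \in I\overline{I} \cap \Q = \Nrd(I)\Z$ (the last equality using $\cO \cap \Q = \Z$). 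There is no real obstacle beyond invoking the invertibility identity $I\overline{I} = \Nrd(I)\cO$; once that is in hand, everything reduces to routine bookkeeping with the reduced norm and the gcd definition.
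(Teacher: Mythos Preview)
The paper does not give its own proof of this lemma; it simply cites \cite[Lemma~5]{KLPT14}. Your argument is correct and is essentially the standard one: invoke the invertibility identity $I\overline{I} = \Nrd(I)\cO$ (valid since $\cO$ is maximal) to get integrality, observe that right multiplication by a unit of $B$ preserves the left-ideal class, and compute the norm via multiplicativity of $\Nrd$. Your justification for pulling the rational factor $\Nrd(\beta)$ through the gcd is fine, though it can be said slightly more directly: for $x \in I$ one has $\Nrd(x\beta) = q_I(x)\,q_I(\alpha)$, an integer product, and since $\gcd_{x \in I} q_I(x) = 1$ by definition of $\Nrd(I)$, the gcd is $q_I(\alpha)$.
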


\subsection{Supersingular elliptic curves}
A detailed account of the theory of elliptic curves can be found in~\cite{Silverman-Arithmetic}.
An \emph{elliptic curve} is an abelian variety of dimension $1$.
More explicitly, given a field $k$ of characteristic $p>3$, an elliptic curve $E$ can be described as an equation $y^2 = x^3 + Ax + B$ for $A,B \in k$ with $4A^3 + 27B^2 \neq 0$. The \emph{$k$-rational points} of $E$ is the set $E(k)$ of pairs $(x,y) \in k^2$ satisfying the curve equation, together with a point $\infty_E$ `at infinity'. They form an abelian group, written additively, where $\infty_E$ is the neutral element. The \emph{geometric points} of $E$ are the $\overline k$-rational points, where $\overline k$ is the algebraic closure of $k$.

Let $E_1$ and $E_2$ be two elliptic curves defined over $k$. An \emph{isogeny} $\varphi : E_1 \rightarrow E_2$ is a non-constant rational map that sends $\infty_{E_1}$ to $\infty_{E_2}$. It is then a group homomorphism from $E_1(\overline k)$ to $E_2(\overline k)$, and is its kernel over the algebraic closure, written  $\ker(\varphi)$, is finite. The \emph{degree} $\deg(\varphi)$ is the degree of $\varphi$ as a rational map. When $\deg(\varphi)$ is coprime to $p$, then $\deg(\varphi) = \#\ker(\varphi)$.
The degree is multiplicative, in the sense that $\deg(\psi\circ\varphi) = \deg(\psi)\deg(\varphi)$.
For any integer $n \neq 0$, the multiplication-by-$m$ map $[m] : E \rightarrow E$ is an isogeny.
For any isogeny $\varphi : E_1 \rightarrow E_2$, its \emph{dual} is the unique isogeny $\hat \varphi : E_2 \rightarrow E_1$ such that $\hat\varphi\circ \varphi =[\deg(\varphi)]$.
If $\deg(\varphi) = \ell$ is prime, we say that $\varphi$ is an $\ell$-isogeny. 
Any isogeny factors as a product of isogenies of prime degrees, hence $\ell$-isogenies are basic building blocks. An isogeny of degree coprime to $p$ is uniquely determined by its kernel. Given this kernel, one can compute equations for the isogeny is time polynomial in $\deg(\varphi)$ and $\log p$ via V\'elu's formula~\cite{velu}. An isogeny can be represented in size polynomial in $\log p$ and $\deg(\varphi)$, for instance as a rational map, or by a generator of its kernel. The output of $\EllIsoPath$ is a chain of $\ell$-isogenies of length $k$; it corresponds to an isogeny of degree $\ell^k$, but should be represented as a sequence of $\ell$-isogenies (so that the length of the representation is polynomial in $\ell$ and $k$ instead of $\ell^k$).

An \emph{isomorphism} is an isogeny $\varphi : E_1 \rightarrow E_2$ of degree $1$. We say that $E_1$ and $E_2$ are isomorphic over $K$ (an extension of $k$) if there is an isomorphism between them that is defined over $K$. The $j$-invariant of $E$ is $j(E) = \frac{256\cdot27\cdot A^3}{4A^3 + 27B^2}$. We have $j(E_1) = j(E_2)$ if and only if $E_1$ and $E_2$ are isomorphic over the algebraic closure of $k$. It is then simple to test $\overline k$-isomorphism. It is also simple to compute explicit isomorphisms.

An endomorphism of $E$ is an isogeny $\varphi : E \rightarrow E$ from $E$ to itself. The endomorphism ring $\End(E)$ is the collection of these endomorphisms, together with the trivial map $\varphi(x,y) = \infty_E$. It is a ring for pointwise addition, and for composition of maps. The map $\Z \rightarrow \End(E) : m \mapsto [m]$ is an embedding. In that sense, $\End(E)$ contains $\Z$ as a subring, but it is always larger (in positive characterisic). The curve $E$ is \emph{supersingular} if $\End(E)$ has rank $4$ as a $\Z$-module. Then, $\End(E)$ is isomorphic to a maximal order in the quaternion algebra $B_{p,\infty}$, defined in Section~\ref{sec:introquat}.
Up to $\overline k$-isomorphism, all supersingular elliptic curves are defined over $\F_{p^2}$, and there are $\lfloor p/12\rfloor + \varepsilon$ of them, with $\varepsilon \in \{0,1,2\}$. 
Fix a prime $\ell \neq p$. The supersingular $\ell$-isogeny graph (for $p$) is the graph whose vertices are these supersingular elliptic curves (up to isomorphism), and there is an edge from $E_1$ to $E_2$ for each $\ell$-isogeny from $E_1$ to $E_2$. It is a regular graph of degree $\ell+1$ (because any $E$ has $\ell+1$ subgroups $H$ of order $\ell$, each inducing an isogeny of kernel $H$). The $\ell$-isogeny graph is Ramanujan. In particular, random walks rapidly converge to the uniform distribution, and any two curves of the graph are connected by an isogeny of degree $\ell^{m}$ with $m = O(\log p)$. 

\subsection{The Deuring correspondence}
As already mentioned, given a supersingular elliptic curve $E$ over $\F_{p^2}$, its endomorphism ring $\End(E)$ is isomorphic to a maximal order in $B_{p,\infty}$.
This \emph{Deuring correspondence} is in fact a bijection
$$\left\{\substack{\text{Isomorphism classes of}\\ \text{maximal orders $\cO$ in $B_{p,\infty}$}}\right\} \longleftrightarrow \left\{\substack{\text{Isomorphism classes of }\\ \text{supersingular elliptic curves $E$ over $\F_{p^2}$}}\right\}/\Gal(\F_{p^2}/\F_p).$$
A more detailed account of the theory can be found in~\cite[Chapter~42]{VoightBook}.
\\

We have identified a special order $\cO_0$ in Lemma~\ref{lem:specialorders}, and it is natural to wonder what the corresponding elliptic curve may be. If $p \equiv 3 \bmod 4$, then the curve $E_0$ defined by $y^2 = x^3 - x$ is supersingular. It is defined over $\F_p$, so has the Frobenius endomorphism $\pi : (x,y) \mapsto (x^p,y^p)$. Furthermore, if $\alpha \in \F_{p^2}$ satisfies $\alpha^2 = -1$, it is easy to check that $\iota : (x,y) \mapsto (-x,\alpha y)$ is also an endomorphism. These endomorphisms generate almost all $\End(E_0)$: we actually have
$$\End(E_0) = \Z \oplus \Z \iota \oplus \Z \frac{\iota+\iota\pi}{2} \oplus \Z \frac{1+\pi}{2}.$$
Since $\iota^2 = [-1]$ and $\pi^2 = [-p]$, we have $\End(E_0) \cong \cO_0$. More generally, we have the following result.

\begin{lemma}[\protect{\cite[Proposition~3]{EHLMP18}}]\label{lem:constructingE0}
Let $\cO_0$ as in Lemma~\ref{lem:specialorders}.
There is an algorithm that for any prime $p > 2$ computes an elliptic curve $E_0$  over $\F_p$ and $\iota\in \End(E_0)$ such that
$$\cO_0 \longrightarrow \End(E_0) : 1,i,j,ij \longmapsto [1], \iota, \pi, \iota \pi$$
is an isomorphism, and runs in time polynomial in $\log p$ (if $p \equiv 1 \bmod 8$, we assume GRH).
\end{lemma}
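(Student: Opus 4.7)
My plan is to split into the three congruence classes of $p$ modulo $8$ that arise in Lemma~\ref{lem:specialorders}. In each case I will exhibit an explicit curve $E_0/\F_p$ and an endomorphism $\iota$, then verify that the prescribed assignment $1,i,j,ij \mapsto [1],\iota,\pi,\iota\pi$ defines a ring homomorphism by checking the relations $\iota^2 = [-q]$, $\pi^2 = [-p]$, and $\iota\pi + \pi\iota = 0$, where $q$ is as in Lemma~\ref{lem:quaternionpq}. Because $\cO_0$ is maximal, such a homomorphism is automatically injective with image containing a maximal order, hence an isomorphism onto $\End(E_0)$.

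For $p \equiv 3 \bmod 4$ I will take the classical supersingular curve $E_0 : y^2 = x^3 - x$ over $\F_p$, with the endomorphism $\iota : (x,y) \mapsto (-x, \alpha y)$ for any $\alpha \in \F_{p^2}$ satisfying $\alpha^2 = -1$; such $\alpha$ can be produced by a probabilistic polynomial-time square-root algorithm (Tonelli--Shanks or Cipolla). For $p \equiv 5 \bmod 8$ I will use a fixed curve with CM by $\Z[\sqrt{-2}]$, for instance $E_0 : y^2 = x^3 + 4x^2 + 2x$, with its well-known explicit degree-$2$ endomorphism satisfying $\iota^2 = [-2]$; since $(-2/p) = -1$ in this case, $p$ is inert in $\Q(\sqrt{-2})$ and the reduction remains supersingular.

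The main case is $p \equiv 1 \bmod 8$. Here I first invoke Lemma~\ref{lem:quaternionpq} to compute $q = q_p$, which under GRH satisfies $q = O((\log p)^2)$. I then plan to apply Br\"oker's CM-based construction of supersingular elliptic curves: compute the Hilbert class polynomial $H_{-q}(X) \in \Z[X]$, whose degree $h(-q)$ and coefficient size are polynomial in $q$ and hence in $\log p$; find a root $j_0 \in \F_p$ of $H_{-q} \bmod p$ (such a root exists because $p$ is inert in $\Q(\sqrt{-q})$ by the defining property of $q_p$, so every root of $H_{-q} \bmod p$ is a supersingular $j$-invariant, and an analysis of Frobenius orbits on these roots produces an $\F_p$-rational one); construct $E_0/\F_p$ with $j(E_0) = j_0$; and finally extract $\iota$ with $\iota^2 = [-q]$ by computing the kernel of the cyclic $q$-isogeny implementing the CM action of $\sqrt{-q}$, via modular polynomials together with V\'elu's formulas.

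The hard part is this third case, in which GRH is used twice: once to bound $q_p$ through Lemma~\ref{lem:quaternionpq}, and again, implicitly, to guarantee that Br\"oker's CM construction runs in polynomial time. Once $E_0$ and $\iota$ are in hand, the verification that the prescribed map is an isomorphism is a routine computation with the quaternion relations followed by a discriminant comparison (both sides have discriminant $p^2$); the full details in the style I intend to follow can be found in \cite{EHLMP18}.
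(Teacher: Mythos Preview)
The paper does not give its own proof of this lemma; it is simply quoted from \cite[Proposition~3]{EHLMP18}. Your plan is essentially a sketch of that cited argument (explicit CM curves for $p\equiv 3,5\bmod 8$, and Br\"oker's Hilbert-class-polynomial construction for $p\equiv 1\bmod 8$ once $q_p$ is bounded via GRH), so there is nothing substantive to compare.

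One point to tighten: the step ``because $\cO_0$ is maximal, the homomorphism is automatically an isomorphism onto $\End(E_0)$'' is not quite self-contained. Checking $\iota^2=[-q]$, $\pi^2=[-p]$, $\iota\pi=-\pi\iota$ only gives you a $\Q$-algebra isomorphism $B_{p,\infty}\to\End(E_0)\otimes\Q$; you still need that the specific generators of $\cO_0$ (which involve denominators $2$ and $q$) land in $\End(E_0)$, or equivalently that the image of $\cO_0$ coincides with the unique maximal order containing $\Z[\iota,\pi]$ on the elliptic-curve side. This is exactly what is verified case-by-case in \cite{EHLMP18}. Also, in the $p\equiv 1\bmod 8$ case, anticommutation $\iota\pi=-\pi\iota$ is equivalent to $\iota^{(p)}=-\iota$, which may force a specific choice of $\iota$ (or of the quadratic twist of $E_0$); your plan should make explicit how this choice is made after extracting a degree-$q$ self-isogeny.
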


The Deuring correspondence runs deeper than a simple bijection: it also preserves morphisms between the two categories. Given any isogeny $\varphi : E_1 \rightarrow E_2$, let $I_\varphi = \Hom(E_2,E_1)\varphi$, where $\Hom(E_2,E_1)$ is the set of isogenies from $E_2$ to $E_1$. This object $I_\varphi$ is a left $\End(E_1)$-ideal, hence $\cO_L(I_\varphi) \cong \End(E_1)$. Furthermore, $\cO_R(I_\varphi) \cong \End(E_2)$. In other words, $I_\varphi$ connects $\End(E_1)$ to $\End(E_2)$, just as $\varphi$ connects $E_1$ to $E_2$. This construction preserves the `quadratic structure', in the sense that $\Nrd(I_\varphi) = \deg(\varphi)$.

Conversely, suppose $I$ is a left $\End(E_1)$-ideal. Then, we can construct an isogeny $\varphi_I$ as the unique isogeny with kernel
$\bigcap_{\alpha \in I} \ker(\alpha).$
These two constructions are mutual inverses, meaning that for any $I$ and $\varphi$, we have $I_{\varphi_I} = I$ and $\varphi_{I_\varphi} = \varphi$.
The translation from $I$ to $\varphi_I$ can be computed efficiently, provided that $I$ is an ideal in the special order $\cO_0$ from Lemma~\ref{lem:specialorders}, and that $\Nrd(I)$ is powersmooth (its prime-power factors are polynomially bounded). This is the following lemma. Only the case $p \equiv 3 \bmod 4$ is considered in~\cite{GPS20}, but as noted in~\cite{EHLMP18}, it easily extends to arbitrary $p$.

\begin{lemma}[\protect{\cite[Lemma~5]{GPS20}}]\label{lem:idealtoisogeny}
Let $\cO_0$ as in Lemma~\ref{lem:specialorders}, and $E_0$ as in Lemma~\ref{lem:constructingE0}.
There exists an algorithm which, given a left $\cO_0$-ideal $I$ of norm $N = \prod_i \ell_i^{e_i}$, returns the corresponding isogeny $\varphi_I : E_0 \rightarrow E_1$. The complexity of this algorithm is polynomial in $\log p$ and $\max_i (\ell_i^{e_i})$ (if $p \equiv 1 \bmod 8$, we assume GRH).
\end{lemma}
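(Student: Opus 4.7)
The plan is to use the explicit isomorphism $\cO_0 \cong \End(E_0)$ of Lemma~\ref{lem:constructingE0} to compute $\ker \varphi_I$ as an explicit subgroup of $E_0[N]$, and then apply V\'elu's formulas to build $\varphi_I$. Under this isomorphism each $\alpha \in \cO_0$ is a rational map in the generators $[1], \iota, \pi, \iota\pi$, so its action on any point of $E_0$ defined over $\F_{p^k}$ can be evaluated in time $\mathrm{poly}(k,\log p)$. The case $\ell_i = p$ is degenerate and can be reduced to separable factors by peeling off Frobenius isogenies attached to the unique two-sided ideal of norm $p$, so from now on we assume $\gcd(N,p)=1$.

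\textbf{Reduction to prime-power norms.} Write $N = \prod_i \ell_i^{e_i}$ and set $I_i = I + \cO_0 \ell_i^{e_i}$, a left $\cO_0$-ideal of norm $\ell_i^{e_i}$. The torsion decomposes as $E_0[N] = \bigoplus_i E_0[\ell_i^{e_i}]$, and by the Chinese Remainder Theorem applied to the kernel condition one has $\ker \varphi_I = \bigoplus_i \ker \varphi_{I_i}$. Consequently $\varphi_I$ can be assembled as the composition of the isogenies $\varphi_{I_i}$, each time pushing forward the remaining kernel generators through the previously computed steps. It therefore suffices to compute each $\varphi_{I_i}$ separately.

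\textbf{Kernel computation and chain construction.} Fix one factor, writing $\ell=\ell_i$, $e=e_i$, $J=I_i$. A short basis of $J$ (via HNF of any $\Z$-basis reduced modulo $\ell^e \cO_0$) yields two elements $\alpha,\beta \in \cO_0$ generating $J$ as a left ideal. The torsion $E_0[\ell^e]$ is defined over an extension $\F_{p^{2m}}/\F_{p^2}$ with $m \mid \ell^{e-1}(\ell^2-1)$, hence of size polynomial in $\ell^e$ and $\log p$; we compute a basis $(P,Q)$ of $E_0[\ell^e]$ using division polynomials. Evaluating $\alpha$ and $\beta$ at $(P,Q)$ and expressing the images in the basis yields matrices $A,B \in M_{2\times 2}(\Z/\ell^e\Z)$, and the common right-kernel $\{R = uP+vQ : AR = BR = 0\}$ is cyclic of order $\ell^e$ by Lemma~\ref{lem:specialorders} (since $\Nrd(J)=\ell^e$); extract a generator $R$ by linear algebra. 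Finally build the chain of $e$ isogenies of degree $\ell$: with $R_0 = R$, at step $k$ the point $\ell^{e-1-k}R_k$ generates the kernel of an $\ell$-isogeny $\psi_k : E_k \to E_{k+1}$, obtained via V\'elu's formulas in time $\mathrm{poly}(\ell,\log |\F_{p^{2m}}|)$, and then set $R_{k+1} = \psi_k(R_k)$. The composition $\psi_{e-1}\circ\cdots\circ\psi_0$ is $\varphi_J$.

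\textbf{Main obstacle.} The crux is controlling the size of the auxiliary field $\F_{p^{2m}}$ over which the computation takes place, so that all operations are polynomial in $\ell^e$ and $\log p$. This follows from the classical bound $m \mid \ell^{e-1}(\ell^2-1)$ on the definition field of $E_0[\ell^e]$, coming from the $\Z/\ell^e\Z$-action on $E_0[\ell^e]$ via Frobenius and the Weil pairing; the bound gives $|\F_{p^{2m}}| = \mathrm{poly}(\ell^e, \log p)$. Once the torsion basis is available the remaining steps---evaluating four basis endomorphisms, linear algebra over $\Z/\ell^e\Z$, V\'elu's formulas, and forward push of kernels---are all standard and polynomial. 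The GRH hypothesis for $p \equiv 1 \bmod 8$ enters only through Lemma~\ref{lem:quaternionpq}, where it guarantees that the coefficients of $\cO_0$'s generators are of size $\mathrm{poly}(\log p)$, preserving the overall polynomial complexity.
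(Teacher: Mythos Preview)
The paper does not give its own proof of this lemma; it simply cites \cite[Lemma~5]{GPS20}. Your argument is precisely the standard one used there: evaluate generators of the ideal on an explicit basis of $E_0[\ell^e]$ over a small extension field, extract the kernel by linear algebra, and feed it to V\'elu. So the approach is the same.

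A few small inaccuracies are worth fixing. First, the common kernel of your matrices $A,B$ is cyclic of order $\ell^e$ only when the ideal $J$ is primitive (i.e.\ $\ell \nmid J$); in general one should first strip off the scalar part $J = \ell^k J'$ and prepend $[\ell^k]$ to the isogeny chain. Your reference to Lemma~\ref{lem:specialorders} for this cyclicity is misplaced---that lemma concerns the structure of $\cO_0$, not kernels. Second, when you write ``$|\F_{p^{2m}}| = \mathrm{poly}(\ell^e,\log p)$'' you presumably mean $\log|\F_{p^{2m}}|$; the cardinality itself is exponential. Third, an HNF of $J$ gives a $\Z$-basis of four elements, not two left-ideal generators; either intersect the kernels of all four, or use the standard two-generator form $J = \cO_0\ell^e + \cO_0\gamma$. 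Finally, your extension-degree bound $m \mid \ell^{e-1}(\ell^2-1)$ is correct but loose: since $E_0$ is defined over $\F_p$ with $\pi^2 = [-p]$, one actually has $m \mid \ell^{e-1}(\ell-1)$. None of these affect the validity of the overall argument.
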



\section{Quadratic forms and prime sampling}\label{sec:qfprimesampling}

\noindent
In this section, we consider the following problem: given an integral, primitive, positive definite quadratic form $f$ of rank $r$, find $x\in\Z^r$ such that $f(x)$ is prime. We then give a first application of this problem, for finding ideals of prime norm in a given ideal class of a maximal order of $B_{p,\infty}$.

\subsection{Sampling primes} Let $f$ be an integral, primitive, positive definite quadratic form. In this section, we discuss the problem of sampling vectors in $\{x \in \Z^r \mid f(x) \leq \rho\}$ so that $f(x)$ is prime. Let us first focus on the binary case, for which the following theorem tells that an important proportion of vectors represent primes.
It is a classical consequence of the effective Chebotarev density theorem under GRH, due to Lagarias and Odlyzko~\cite{LO77}.

\begin{theorem}[GRH]\label{thm:primebinaryform}
If $f$ is an integral, primitive, positive definite, binary quadratic form of discriminant $D$, the number of primes at most $\rho$ represented by $f$ is
$$\pi_f(\rho) = \frac{\delta \rho}{h(D)\log \rho} + O(\rho^{1/2}\log(|D|\rho)),$$
where $\delta$ is $1$ is $f(x,y)$ is equivalent to $f(x,-y)$, and $1/2$ otherwise.
\end{theorem}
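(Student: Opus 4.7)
\emph{Proof plan.} The plan is to translate ``$f(x,y)$ is prime'' into a Chebotarev condition in a ring class field and invoke the effective Chebotarev density theorem of Lagarias and Odlyzko under GRH.

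Since $f$ is positive definite one has $D<0$, and $K=\Q(\sqrt{D})$ is imaginary quadratic; let $\cO\subset K$ be the quadratic order of discriminant $D$ (of conductor $\mathfrak f$) and let $L$ be its ring class field. Artin reciprocity identifies $\Gal(L/K)$ with $\Pic(\cO)$, a group of order $h(D)$, and $\Gal(L/\Q)$ is generalised dihedral of order $2h(D)$, the nontrivial element of $\Gal(K/\Q)$ acting on $\Pic(\cO)$ by inversion.

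Via Gauss' classical bijection identifying proper ($\SL_2(\Z)$-)equivalence classes of primitive positive definite integral binary quadratic forms of discriminant $D$ with elements of $\Pic(\cO)$, the form $f$ corresponds to a class $C\in\Pic(\cO)$, and $f$ is equivalent to $f(x,-y)$ exactly when $C=C^{-1}$. For a rational prime $p\nmid 2D\mathfrak f$, one checks that $p$ is represented by $f$ if and only if $p$ splits in $K$ as $p\cO_K=\mathfrak p\bar{\mathfrak p}$ with $[\mathfrak p]\in\{C,C^{-1}\}$, i.e., the Frobenius conjugacy class of $p$ in $\Gal(L/\Q)$ equals $\{C,C^{-1}\}$; this conjugacy class has cardinality $1$ or $2$ according to whether $C=C^{-1}$, which after bookkeeping yields the factor $\delta$ in the main term.

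I would then apply the Lagarias--Odlyzko effective Chebotarev density theorem (under GRH) to $L/\Q$ and this conjugacy class: the main term $\delta\rho/(h(D)\log\rho)$ appears directly, and the error is of size $O\bigl(h(D)^{-1}\sqrt{\rho}\,(\log|d_L|+[L:\Q]\log\rho)\bigr)$. Using $[L:\Q]=2h(D)$ together with the discriminant bound $\log|d_L|=O(h(D)\log|D|)$ (from the conductor--discriminant formula and the fact that $L/K$ is unramified outside primes dividing $\mathfrak f$), the error collapses to $O(\sqrt{\rho}\log(|D|\rho))$ as required, and the $O(\log|D|)$ primes dividing $2D\mathfrak f$ are absorbed into the error.

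The hard part, in my view, is ensuring that the bound $\log|d_L|=O(h(D)\log|D|)$ is strong enough: naively the Chebotarev error could grow with $h(D)$, and one must exploit the conductor--discriminant formula together with the ramification restriction on $L/K$ to cancel the factor of $h(D)$ coming from $[L:\Q]$ against the $1/[L:\Q]$ in the leading constant of the Chebotarev error term.
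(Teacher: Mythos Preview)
Your proposal is correct and is exactly the approach the paper indicates: the paper gives no proof, merely citing the result as ``a classical consequence of the effective Chebotarev density theorem under GRH, due to Lagarias and Odlyzko~\cite{LO77}'', and your sketch fills in precisely those standard details via the ring class field. The discriminant bound $\log|d_L|=O(h(D)\log|D|)$ that you single out as the delicate point is indeed the key to getting the stated error term, and your argument for it is the right one.
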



The quantity $\pi_f(\rho)$ should be compared to the cardinality of $\{(x,y) \in \Z^2 \mid f(x,y) \leq \rho\}$, which we estimate in the following lemma, in a slightly more general form for later purposes.

%

\begin{lemma}\label{lem:balldim2}
For any integral, positive definite, binary quadratic form $f$, any $x_0 \in \R^2$ and any $\rho \geq 0$, we have
$$\left|\#\{x \in \Z^2 \mid f(x+x_0) \leq \rho\} - \frac{\pi}{\Vol(f)}\rho\right| \leq   2\pi\sqrt{\frac 2 3} \rho^{1/2} +\frac {2\pi}{3} \Vol(f).$$
\end{lemma}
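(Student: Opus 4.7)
The plan is to count lattice points by partitioning $\R^2$ into Voronoi cells of $\Z^2$ with respect to the metric induced by $f$, and then to compare these cells to the set $S := \{y \in \R^2 : f(y+x_0) \leq \rho\}$, which is a translate of the $f$-ball of $f$-radius $\sqrt\rho$ and thus has Lebesgue area exactly $\pi\rho/\Vol(f)$.

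Concretely, I define for each $x \in \Z^2$ the cell $V_x = \{y \in \R^2 : f(y-x) \leq f(y-x')\text{ for all }x'\in\Z^2\}$. These are translates of the fixed polyhedral fundamental domain $V_0$, so each has Lebesgue area $1$ and their union covers $\R^2$ with pairwise intersections of measure zero. By definition of the covering radius, every $y \in V_x$ satisfies $f(y-x) \leq \mu(f)^2$. Using the triangle inequality for the Euclidean norm $\|\cdot\|_f := f(\cdot)^{1/2}$ associated to $f$, I deduce that whenever $f(x+x_0) \leq \rho$, the entire cell $V_x$ is contained in $S^+ := \{y : f(y+x_0) \leq (\sqrt\rho+\mu(f))^2\}$, and whenever $\mu(f) \leq \sqrt\rho$ and $f(x+x_0) > \rho$, the cell $V_x$ is disjoint from $S^- := \{y : f(y+x_0) \leq (\sqrt\rho-\mu(f))^2\}$.

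Writing $N$ for the quantity on the left-hand side, summing Lebesgue areas of the selected cells yields $\mathrm{area}(S^-) \leq N \leq \mathrm{area}(S^+)$ in the regime $\mu(f) \leq \sqrt\rho$; since both areas equal $\pi r^2/\Vol(f)$ for the appropriate $r$, this gives
\[
\left|N - \tfrac{\pi\rho}{\Vol(f)}\right| \leq \tfrac{\pi}{\Vol(f)}\bigl(2\sqrt\rho\,\mu(f) + \mu(f)^2\bigr).
\]
To match the claimed constants, I invoke \Cref{lem:boundoncoveringradius} with $r = 2$ and $\gamma_2 = 2/\sqrt{3}$, obtaining $\mu(f) \leq \sqrt{2/3}\,\Vol(f)$. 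Hence $\mu(f)/\Vol(f) \leq \sqrt{2/3}$ and $\mu(f)^2/\Vol(f) \leq (2/3)\Vol(f)$, so the two terms become exactly $2\pi\sqrt{2/3}\,\rho^{1/2}$ and $(2\pi/3)\Vol(f)$.

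The step that requires a little extra care is the remaining regime $\mu(f) > \sqrt\rho$, where $S^-$ is empty and the argument gives no nontrivial lower bound on $N$. Here I observe that $\rho \leq \mu(f)^2 \leq (2/3)\Vol(f)^2$, so $\pi\rho/\Vol(f) - N \leq \pi\rho/\Vol(f) \leq (2\pi/3)\Vol(f)$ by non-negativity of $N$, while the upper bound $N - \pi\rho/\Vol(f) \leq 2\pi\sqrt{2/3}\,\rho^{1/2} + (2\pi/3)\Vol(f)$ continues to follow from $N \leq \mathrm{area}(S^+)$ via the same substitution; this completes the proof in both regimes.
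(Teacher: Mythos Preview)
Your proof is correct and follows essentially the same approach as the paper: both arguments pack Voronoi cells of the lattice inside/outside the dilated $f$-balls $S^\pm$ of radii $\sqrt\rho\pm\mu(f)$, then plug in the covering-radius bound $\mu(f)\le\sqrt{2/3}\,\Vol(f)$ from \Cref{lem:boundoncoveringradius}. The only difference is that you explicitly treat the degenerate regime $\mu(f)>\sqrt\rho$ (where $S^-$ is empty), a case the paper's write-up leaves implicit.
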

\begin{proof}
For any $z \geq 0$, let $V_2(z) = \pi z^2$ be the volume of the standard 2-ball of radius $z$. It is a classical application of the covering radius $\mu(f)$ that 
$$V_2(\rho^{1/2} - \mu(f)) \leq \Vol(f) \cdot \#\{x \in \Z^2 \mid f(x+x_0) \leq \rho\} \leq V_2(\rho^{1/2} + \mu(f)).$$
This comes from the fact that Voronoi cells of $f$ have volume $\Vol(f)$ and diameter $2\mu(f)$.
From Lemma~\ref{lem:boundoncoveringradius} with Hermite's constant $\gamma_2 = 2/\sqrt{3}$, we have $\mu(f) \leq \sqrt{\frac 2 3} \Vol(f)$. We obtain
$$\left|\left(\rho^{1/2} \pm \mu(f)\right)^2 -\rho \right| \leq   2\mu(f)\rho^{1/2} +\mu(f)^2 \leq 2\sqrt{\frac 2 3} \Vol(f)\rho^{1/2} +\frac 2 3 \Vol(f)^2,$$
from which the result follows.
\end{proof}

\begin{lemma}\label{lem:samplingbinary}
Let $f$ be a primitive, positive definite, integral, binary quadratic form, and let $\rho > 0$. There is an algorithm that samples uniformly random elements from $$\left\{(x,y) \in \Z^2 \mid f(x,y) \leq \rho\right\}$$ in polynomial time in $\log \rho$ and in $\length(f)$.
\end{lemma}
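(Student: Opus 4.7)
The plan is to Gauss-reduce $f$ and then apply a simple rejection sampler in the reduced basis. First, classical Gauss reduction computes, in time polynomial in $\length(f)$, a unimodular $U \in \GL_2(\Z)$ such that $f^U$ has Gram matrix $\smallmtwo{a}{b}{b}{c}$ satisfying $|2b|\leq a \leq c$. In particular $\Vol(f^U) = \Vol(f)$ and, from $\Vol(f)^2 = ac-b^2 \geq 3a^2/4$, one has $a \leq (2/\sqrt 3)\Vol(f)$. Since $U$ induces a value-preserving bijection $\Z^2 \to \Z^2$, it suffices to sample uniformly from $S := \{m\in\Z^2 \mid f^U(m) \leq \rho\}$ and return $Um$.

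For any fixed $m_2 \in \Z$, completing the square in $m_1$ shows that $\{m_1 \in \Z \mid f^U(m_1,m_2) \leq \rho\}$ is empty exactly when $\Vol(f)^2 m_2^2 > \rho a$, and is otherwise an explicitly computable integer interval $[m_1^-(m_2), m_1^+(m_2)]$ of length at most $2\sqrt{\rho/a}$. I would set $M_2 = \lceil \sqrt{\rho a}/\Vol(f)\rceil$ and $N_{\max} = \lceil 2\sqrt{\rho/a}\rceil + 1$, both of bit-size polynomial in $\log\rho$ and $\length(f)$, and use the following loop: draw $m_2$ uniformly in $\{-M_2,\dots,M_2\}$, draw $k$ uniformly in $\{0,\dots,N_{\max}-1\}$, set $N(m_2) = \max(0, m_1^+(m_2) - m_1^-(m_2) + 1)$, and if $k < N(m_2)$ output $U\cdot(m_1^-(m_2)+k, m_2)^t$; otherwise restart. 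Per round, each element of $S$ is output with probability exactly $1/((2M_2+1)N_{\max})$, so conditioning on acceptance yields the uniform distribution on $S$.

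The expected number of rounds equals $(2M_2+1)N_{\max}/\#S$. Lemma~\ref{lem:balldim2} applied to $f^U$ gives $\#S = \pi\rho/\Vol(f) + O(\sqrt\rho + \Vol(f))$, and an elementary computation shows $(2M_2+1)N_{\max} \leq 4\rho/\Vol(f) + O(\sqrt{\rho a}/\Vol(f) + \sqrt{\rho/a} + 1)$. Once $\rho$ exceeds a sufficiently large absolute constant times $\Vol(f)^2$, both quantities are dominated by their leading terms and the ratio is at most $4/\pi + o(1)$, so the loop terminates in $O(1)$ expected iterations. In the complementary regime $\rho = O(\Vol(f)^2)$, the bounds $a \leq (2/\sqrt 3)\Vol(f)$ and $\sqrt\rho = O(\Vol(f))$ together force $(2M_2+1)N_{\max} = O(\Vol(f))$, so $S$ has polynomial size in $\length(f)$ and can be enumerated directly and sampled uniformly. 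The only delicate step is a clean separation of the main and error terms in Lemma~\ref{lem:balldim2} to obtain the constant lower bound on acceptance; everything else is routine arithmetic.
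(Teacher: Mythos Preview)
Your slice-based rejection sampler is a natural approach and genuinely different from the paper's: the paper samples a continuous point uniformly in a ball of radius $r+\mu$, rounds to the nearest lattice point (a CVP solve), and accepts if the result lies in $B_f(r)$; the acceptance bound then comes directly from $r\geq\sqrt 2\,\mu$ once $\rho\geq f(b_2)$. Your method avoids CVP entirely and is arguably more elementary.

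There is, however, a genuine gap in your treatment of the small-$\rho$ regime. You claim that when $\rho=O(\Vol(f)^2)$ one has $(2M_2+1)N_{\max}=O(\Vol(f))$ and hence $S$ can be enumerated in time polynomial in $\length(f)$. Both steps fail. From $\sqrt{\rho a}/\Vol(f)=O(\Vol(f)^{1/2})$ and $\sqrt{\rho/a}=O(\Vol(f))$ you only get $(2M_2+1)N_{\max}=O(\Vol(f)^{3/2})$; and in any case $\Vol(f)$ is \emph{exponential} in $\length(f)$, so neither bound is polynomial. Concretely, take $f(x,y)=x^2+Dy^2$ and $\rho=D/2<\Vol(f)^2=D$: then $S=\{(m_1,0):m_1^2\leq D/2\}$ has size $\Theta(\sqrt D)$, which is exponential in $\length(f)\sim\log D$. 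Direct enumeration is therefore not an option.

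The fix stays within your framework. Split at $\rho<c$ (the second successive minimum in the reduced basis) rather than at $\rho\asymp\Vol(f)^2$. If $\rho<c$ then $m_2=0$ is forced and you sample $m_1$ uniformly from an integer interval of length $O(\sqrt\rho)$ in time $O(\log\rho)$---no enumeration needed. If $\rho\geq c$, then $\sqrt{a\rho}/\Vol(f)\geq 1$ and $\sqrt{\rho/a}\geq 1$, and a direct slice count (e.g.\ summing $N(m_2)$ over $|m_2|\leq\tfrac12\sqrt{a\rho}/\Vol(f)$, where each slice has length $\geq\sqrt 3\sqrt{\rho/a}$) gives $\#S\gg\rho/\Vol(f)$ and hence a constant lower bound on the acceptance probability. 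The point is that Lemma~\ref{lem:balldim2} is too crude here: its error term $O(\Vol(f))$ comes from a covering-radius bound and swamps the main term throughout the intermediate range $c\leq\rho\ll\Vol(f)^2$.
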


\begin{proof}
Let $B_f(r) = \{v \in \R^2 \mid f(v) \leq r^2\}$ be the ball of radius $r$ around the origin.
Let $r = \rho^{1/2}$, and we wish to sample uniformly in $B_f(r) \cap \Z^2$.
First, compute a Minkowski-reduced basis $(b_1,b_2)$ of $f$ with $f(b_1) \leq f(b_2)$.
If $\rho < f(b_2)$, then $B_f(r) \cap \Z^2 \subset \Z b_1$, and we can uniformly sample $k \in \Z$ such that $k^2 \leq \rho/f(b_1)$ and return $kb_1$.
We may now assume that $\rho \geq f(b_2)$, which implies $r \geq \sqrt{2}\mu$, with $\mu$ the covering radius of $f$.
Let $\mathscr V = \{v \in \R^2 \mid f(v) = \min_{\lambda \in \Z^2}f(v+\lambda)\}$ be the Voronoi cell around the origin. Given any $v \in \R^2$, a closest lattice vector is an element $\lambda(v) \in \Z^2$ such that $v \in \mathscr V + \lambda(v)$. This closest vector can be computed efficiently in dimension $2$, and is unique for almost all $v$: only the boundaries of Voronoi cells are ambiguous. We sample as follows:
\begin{enumerate}
\item Sample $v \in B_f(r + \mu)$ uniformly.
\item Solve the closest vector problem for $v$, resulting in $\lambda(v)$ (unique with probability $1$).
\item If $\lambda(v) \in B_f(r)$, return it; otherwise restart.
\end{enumerate}
Let us analyse the distribution of $\lambda(v)$ when $v \in B_f(r + \mu)$ is uniform. For any $u \in \Z^2 \cap B_f(r)$, we have $\mathscr V+u \subset B_f(r+\mu)$, hence
$$\Pr[\lambda(v) = u] = \frac{\Vol((\mathscr V+u) \cap B_f(r+\mu))}{\Vol (B_f(r+\mu)))} = \frac{\Vol(\mathscr V)}{\Vol (B_f(r+\mu)))}.$$
In particular, $\Pr[\lambda(v) = u]$ for $u \in \Z^2 \cap B_f(r)$ does not depend on $u$, which proves that the output of the sampling procedure is uniform in $\Z^2 \cap B_f(r)$. Finally,
\begin{align*}
\Pr[\lambda(v) \in B_f(r)] &\geq \Pr[v \in B_f(r-\mu)]
= \frac{(r-\mu)^2}{(r+\mu)^2}
\geq \left(\frac{1-2^{-1/2}}{1+2^{-1/2}}\right)^2 \geq 0.028,
\end{align*}
which proves that the procedure succeeds after an expected constant number of trials.
\end{proof}

\begin{proposition}[GRH]\label{prop:samplingprimebinary}
Let $f$ be a primitive, positive definite, integral, binary quadratic form. For any $\varepsilon>0$, there is an algorithm that finds integers $x,y \in \Z$ such that $f(x,y)$ is a prime number at most $O_\varepsilon(|\disc(f)|^{1+\varepsilon})$, and runs in polynomial time in $\length(f)$.
\end{proposition}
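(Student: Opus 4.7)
My plan is to fix a threshold $\rho = C_\varepsilon |\disc(f)|^{1+\varepsilon}$ for a constant $C_\varepsilon > 0$ to be chosen, and then repeatedly sample pairs $(x,y) \in \Z^2$ uniformly from
$$S_\rho \;=\; \{(x,y) \in \Z^2 \mid f(x,y) \leq \rho\}$$
using Lemma~\ref{lem:samplingbinary}, testing each value $f(x,y)$ for primality with a polynomial-time primality test (AKS, or Miller--Rabin under GRH). The algorithm returns the first pair $(x,y)$ for which $f(x,y)$ is prime.

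To bound the expected runtime, I would lower-bound the per-trial success probability by $1/\poly(\length(f))$. Since $f$ is positive definite binary, $\Vol(f) = |\disc(f)|^{1/2}/2$, and Lemma~\ref{lem:balldim2} (with $x_0 = 0$) gives $|S_\rho| = \pi\rho/\Vol(f) + O(\rho^{1/2} + \Vol(f))$, whose main term dominates as soon as $\rho \gg |\disc(f)|$. On the other hand, the number of pairs $(x,y)\in S_\rho$ with $f(x,y)$ prime lies between $\pi_f(\rho)$ (each represented prime contributes at least one pair) and $|\Aut(f)|\cdot\pi_f(\rho)$, and the automorphism group of any positive definite binary form has order at most $12$. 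Hence the per-trial success probability is of order $\Vol(f)\,\pi_f(\rho)/\rho$.

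Applying Theorem~\ref{thm:primebinaryform}, this probability is
$$\Omega\!\left(\frac{\Vol(f)}{h(|\disc(f)|)\,\log\rho}\right),$$
provided the error term $O(\rho^{1/2}\log(|\disc(f)|\rho))$ is dominated by the main term, which holds once $\rho \gtrsim h(|\disc(f)|)^2 (\log(|\disc(f)|\rho))^2$. I would then invoke the standard GRH-conditional upper bound $h(|D|) = O(|D|^{1/2}\log|D|)$; combined with $\rho = C_\varepsilon |\disc(f)|^{1+\varepsilon}$, this simultaneously dominates the error term (for $C_\varepsilon$ large enough) and yields a per-trial success probability $\Omega(1/(\log|\disc(f)|)^2) = 1/\poly(\length(f))$.

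Putting these pieces together—the uniform sampler of Lemma~\ref{lem:samplingbinary}, a polynomial-time primality test, and $\poly(\length(f))$ expected trials—gives the claimed expected polynomial runtime, with output bounded by $\rho = O_\varepsilon(|\disc(f)|^{1+\varepsilon})$. The main obstacle is purely analytic: arranging for the error term in Theorem~\ref{thm:primebinaryform} to be swamped by the main term already at a threshold only polynomially larger than $|\disc(f)|$. This is resolved by the GRH-conditional class-number bound above, which is the only ingredient beyond what is already stated in the excerpt.
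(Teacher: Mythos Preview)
Your proposal is correct and follows essentially the same approach as the paper: sample uniformly from $S_\rho$ via Lemma~\ref{lem:samplingbinary}, bound $|S_\rho|$ with Lemma~\ref{lem:balldim2}, lower-bound the prime-hitting probability via Theorem~\ref{thm:primebinaryform}, and choose $\rho \asymp |\disc(f)|^{1+\varepsilon}$ so the main term dominates the error after inserting the class-number bound $h(D) = O(|D|^{1/2}\log|D|)$. One cosmetic remark: that class-number bound is in fact unconditional (it follows from the class number formula and the elementary estimate $L(1,\chi_D) = O(\log|D|)$), so you need not label it GRH-conditional.
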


\begin{proof}
From Lemma~\ref{lem:samplingbinary}, one can sample uniformly random pairs of integers $(x,y) \in \Z^2$ such that $f(x,y) \leq \rho$. We conclude by combining Lemma~\ref{lem:balldim2} and Theorem~\ref{thm:primebinaryform} (with $h(D) = O( |D|^{1/2} \log |D|)$, see for instance \cite[p.~138]{Cohen08}), which imply that a uniformly random vector represents a prime with good probability. The $\varepsilon$ in the exponent comes from the crossover point between the main term and the error term in Theorem~\ref{thm:primebinaryform}.
\end{proof}

\begin{proposition}[GRH]\label{prop:samplingprimegeneral}
Let $f$ be a primitive, positive definite, integral quadratic form of dimension~$r\geq 3$. For any $\varepsilon > 0$, there is an algorithm that finds a vector $x \in \Z^r$ such that $f(x)$ is a prime number at most $O_\varepsilon\left(\left(2^{r(r-1)}|\disc(f)|\right)^{1+\varepsilon}\right)$ (or $O_\varepsilon\left(|\disc(f)|^{4/3+\varepsilon}\right)$ if $r = 3$), and runs in polynomial time in $\length(f)$.
\end{proposition}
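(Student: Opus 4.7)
The plan is to reduce the $r$-dimensional problem to the binary case of Proposition~\ref{prop:samplingprimebinary} by extracting a well-chosen rank-$2$ sublattice $L \subset \Z^r$ on which the restriction $f|_L$ is primitive with a controlled discriminant.

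First, I would LLL-reduce the lattice $(\Z^r, f)$, obtaining a basis $(b_1, \dots, b_r)$ satisfying $\prod_{i=1}^r f(b_i) \le 2^{r(r-1)/2}\det(G)$, where $G$ is the Gram matrix of $f$. Together with the integrality bound $f(b_i) \ge 1$, this gives, for any two indices $i \ne j$,
$$f(b_i) f(b_j) \;\le\; 2^{r(r-1)/2} \det(G) \;=\; O\!\left(2^{r(r-1)/2}|\disc(f)|\right).$$

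Second, I would enumerate a polynomial-size family of candidate sublattices $L = \Z u + \Z v$ built from short integral linear combinations of the $b_i$, seeking one whose restriction $f|_L$ is primitive. Since the Gram matrix of $f|_L$ has determinant $f(u)f(v) - \langle u,v\rangle_f^2 \le f(u)f(v)$, the estimate above yields $|\disc(f|_L)| \le 4\, f(u) f(v) \le 2^{r(r-1)}|\disc(f)|$ after absorbing small constants. Primitivity of $f|_L$ for at least one candidate pair will follow from the primitivity of $f$: for each prime $p$ dividing the content of every naive restriction, there must exist some short lattice vector whose $f$-value avoids $p$, and combining two such vectors (possibly as $b_1 + b_j$, $b_i + b_k$, etc.) produces a pair $(u, v)$ with $\gcd(f(u), f(v)) = 1$, which already suffices for primitivity of the induced binary form.

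Third, I would invoke Proposition~\ref{prop:samplingprimebinary} on $f|_L$ to obtain $(x,y) \in \Z^2$ such that $f|_L(x,y) = f(xu + yv)$ is prime and of size
$$O_\varepsilon\!\left(|\disc(f|_L)|^{1+\varepsilon}\right) \;=\; O_\varepsilon\!\left((2^{r(r-1)}|\disc(f)|)^{1+\varepsilon}\right),$$
and return the vector $xu + yv \in \Z^r$.

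The main obstacle is the primitivity requirement. The $b_i$ individually have $f$-values whose \emph{overall} gcd equals $1$ (this is just the primitivity of $f$), but any \emph{particular} pair may fail: for instance, $f = 6x^2 + 10y^2 + 15z^2$ is primitive, yet every pair of coordinate axes yields a non-primitive restriction, forcing one to consider pairs like $(e_1+e_2,\,e_3)$. Making this enumeration rigorous---exhibiting a polynomially-sized list of short candidate pairs and proving by a pigeonhole/covering argument over the prime divisors of the Gram entries that at least one yields a primitive restriction, while simultaneously controlling $f(u)f(v)$ so that the discriminant bound survives---is the delicate step. For the $r=3$ refinement giving $|\disc(f)|^{4/3+\varepsilon}$, I would bypass the binary reduction and argue analytically via Theorem~\ref{thm:estimationofsumSi} on representations $n = p + Q(x,y)$: after decomposing $f$ as $a x_1^2 + Q(x_2, x_3)$ in an adapted basis, one seeks an integer $x_1$ and a prime $p$ such that $p - a x_1^2$ is represented by $Q$, and the exponent $4/3$ arises from balancing the range of $x_1$ against the density of primes produced by that theorem.
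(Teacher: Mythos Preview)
Your overall strategy---LLL-reduce, extract a primitive rank-$2$ sublattice with controlled discriminant, and invoke Proposition~\ref{prop:samplingprimebinary}---is exactly the paper's. However, two parts of your proposal have genuine gaps.

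\textbf{The primitivity step.} You correctly identify this as the delicate point but do not resolve it. Enumerating ``short combinations like $b_i + b_j$'' and hoping some pair $(u,v)$ has $\gcd(f(u),f(v))=1$ does not work in general: if $f(u)$ has many prime factors, every bounded-coefficient combination of the $b_j$ may still share one of them with $f(u)$, and you give no bound on how many primes must be avoided simultaneously. The paper's construction is concrete. Fix $u = b_1$ and write $f(u) = \prod_i a_i^{e_i}$ as a product of pairwise-coprime pieces (factoring as far as is easy; each $a_i$ may remain composite). For each $a_i$, scan the Gram entries $\langle b_j, b_j\rangle$ and $2\langle b_j, b_k\rangle$: since $f$ is primitive, either some entry is coprime to $a_i$ (yielding a vector $v_i$, namely $b_j$ or $b_j+b_k$, with $\gcd(f(v_i),a_i)=1$), or some gcd reveals a nontrivial factor of $a_i$ and one restarts with a finer factorisation. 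Finally set $v = \sum_i v_i \prod_{j\neq i} a_j$, a CRT-style combination forcing $\gcd(f(u),f(v))=1$. The coefficients are bounded by $f(u)=f(b_1)$, giving $f(v) \ll f(b_1)^2 f(b_r)$; then $\sqrt{f(u)f(v)} \ll \sqrt{f(b_1)^3 f(b_r)} \le \sqrt{f(b_1)f(b_2)f(b_3)f(b_r)} \le 2^{r(r-1)/2}\Vol(f)$ for $r\ge 4$.

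\textbf{The case $r=3$.} Your plan to invoke Theorem~\ref{thm:estimationofsumSi} is off-track: that theorem requires the binary piece to have fundamental discriminant and imposes coprimality and size hypotheses that a generic ternary $f$ need not satisfy after an integral splitting $f = a x_1^2 + Q(x_2,x_3)$ (which may not even exist over $\Z$). The paper does \emph{not} change method for $r=3$; the sharper exponent $4/3$ is pure bookkeeping in the same binary reduction. From $f(b_1) \ll \Vol(f)^{2/3}$ and $f(b_1)^2 f(b_3) \le \prod_i f(b_i) \ll \Vol(f)^2$ one gets $f(u)f(v) \ll f(b_1)^3 f(b_3) \ll \Vol(f)^{8/3}$, hence $|\disc(g_{uv})| \ll \Vol(f)^{8/3}$, and Proposition~\ref{prop:samplingprimebinary} then yields a prime of size $O_\varepsilon(|\disc(f)|^{4/3+\varepsilon})$.
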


\begin{proof}
We are looking for two integral vectors $u$ and $v$ that generate a primitive binary quadratic form $g_{uv}(x,y) = f(xu+yv)$. We then apply Proposition~\ref{prop:samplingprimebinary} to $g_{uv}$.

Compute an LLL-reduced~\cite{lenstra82:_factor} basis $(b_1,b_2,\dots,b_r)$ of $f$ so that $f(b_1) \leq 2^{r-1}\Vol(f)^{2/r}$ and $\prod_i f(b_i) \leq 2^{r(r-1)}\Vol(f)^2$. 
Let $u = b_1$. Then, factor $f(u) = \prod_i a_i^{e_i}$ where each $a_i$ seems hard to factor further, and they are pairwise coprime. For each $a_i$, we now describe a procedure that will either reveal new factors of $a_i$ (in which case we can restart with this new piece of information), or find a vector $v_i$ such that 
$f(v_i) = \langle v_i,v_i \rangle$ is coprime to $a_i$. We proceed as follows:
\begin{enumerate}
\item We compute the greatest common divisor of $a_i$ with each of $\langle b_j,b_j \rangle$ and $2\langle b_j,b_k \rangle$ (i.e., the coefficients of $f$ in the basis $b_1,\dots,b_r$).
\item These common divisors cannot all be equal to $a_i$ since $f$ is primitive. So either one of them is a non-trivial factor of $a_i$ (and we restart), or one of them is $1$.
\item If there is an index $j$ such that $\GCD(\langle b_j,b_j \rangle, a_i) = 1$, we return $v_i = b_j$.
\item Otherwise, there are indices $j$ and $k$ with $\GCD(\langle b_j,b_j \rangle, a_i) = \GCD(\langle b_k,b_k \rangle, a_i) = a_i$, and $\GCD(2\langle b_j,b_k \rangle, a_i) = 1$. Then, we return $v_i = b_j+b_k$.
\end{enumerate}
Now, let $v = \sum_{i}v_i\prod_{j\neq i}a_j$, and as desired, $\GCD(f(u),f(v)) = 1$. We have $f(v) \ll f(b_1)^2 f(b_r)$, and the form $g_{uv}$ is primitive.
It has volume at most $\sqrt{f(u)f(v)}$. If $r>3$, we have $\sqrt{f(u)f(v)}\ll\sqrt{f(b_1)f(b_2)f(b_3)f(b_r)} \leq 2^{r(r-1)/2}\Vol(f)$, and if $r = 3$, we have $\sqrt{f(u)f(v)}\ll \Vol(f)^{4/3}$. The result then follows from Proposition~\ref{prop:samplingprimebinary}.
\end{proof}
%

In applications, we will often need to find vectors representing primes that are \emph{large enough} (but not too large). This can be done in a straighforward adaptation of the above strategy.

\begin{proposition}[GRH]\label{prop:samplingprimegenerallarge}
There exists a constant $c$ and an algorithm $\mathscr A$ such that the following holds. 
Let $f$ be a primitive, positive definite, integral quadratic form of dimension~$r$. For any $\rho > (2^{r^2}|\disc(f)|)^c$, the algorithm $\mathscr A(f,\rho)$ outputs a vector $x \in \Z^r$ such that $f(x)$ is a prime number between $\rho$ and $\rho^2$, and runs in polynomial time in $\length(f)$ and $\log \rho$.
\end{proposition}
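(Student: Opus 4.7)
The plan is to adapt the proof of Proposition~\ref{prop:samplingprimegeneral} by restricting attention to vectors representing primes in the interval $[\rho,\rho^2]$, rather than just ``small'' primes. The key point is that when $\rho$ is large compared with $\Vol(g)$, $h(D)$, and the error terms of Theorem~\ref{thm:primebinaryform}, the number of primes represented in $[\rho,\rho^2]$ by a binary form is still $\Theta(\rho^2/(h(D)\log \rho))$, dominating the count of primes below $\rho$.

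If $r\geq 3$, I would first run the LLL-based reduction of Proposition~\ref{prop:samplingprimegeneral} to produce $u,v\in\Z^r$ such that $g(x,y)=f(xu+yv)$ is a primitive positive definite integral binary form with $|\disc(g)|$ bounded by a polynomial in $2^{r^2}|\disc(f)|$; for $r=2$ I would simply take $g=f$. Then I would apply Lemma~\ref{lem:samplingbinary} to sample uniformly from $\{(x,y)\in\Z^2 : g(x,y)\leq \rho^2\}$, and accept whenever $g(x,y)\geq \rho$ and $g(x,y)$ is prime (primality checked under GRH in polynomial time). Rejected samples cause a restart.

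For the analysis, Lemma~\ref{lem:balldim2} gives
\[
\#\{g\leq \rho^2\} = \tfrac{\pi}{\Vol(g)}\rho^2 + O(\rho + \Vol(g)),\qquad
\#\{g\leq \rho\} = \tfrac{\pi}{\Vol(g)}\rho + O(\rho^{1/2}+\Vol(g)),
\]
so the denominator of the acceptance probability is $\tfrac{\pi}{\Vol(g)}\rho^2(1+o(1))$ whenever $\rho$ comfortably exceeds $\Vol(g)$. By Theorem~\ref{thm:primebinaryform} the number of primes in $[\rho,\rho^2]$ represented by $g$ is
\[
\pi_g(\rho^2)-\pi_g(\rho) = \frac{\delta\,\rho^2}{h(\disc(g))\log \rho^2} + O\!\bigl(\rho\log(|\disc(g)|\rho)\bigr),
\]
which is $\Theta(\rho^2/(h(\disc(g))\log\rho))$ once $\rho$ dominates the error. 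Using $h(\disc(g))=O(|\disc(g)|^{1/2}\log|\disc(g)|)$ and $\Vol(g)\asymp|\disc(g)|^{1/2}$, the acceptance probability is therefore $\Omega(1/(\log\rho\cdot\log|\disc(g)|))$, and an expected polynomial number of trials succeeds.

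The main obstacle is choosing $c$ so that every error term is absorbed. Concretely, I need $\rho$ to exceed $\Vol(g)$ to control the lattice point count, and $\rho$ to exceed $h(\disc(g))\log(|\disc(g)|\rho)\log\rho$ to control the prime count error. Both reduce, via the LLL bound on $|\disc(g)|$, to $\rho\gg (2^{r^2}|\disc(f)|)^{O(1)}$; taking $c$ equal to this implicit constant (with a small safety margin) makes the hypothesis $\rho>(2^{r^2}|\disc(f)|)^c$ sufficient. The rest is bookkeeping already carried out in Propositions~\ref{prop:samplingprimebinary} and~\ref{prop:samplingprimegeneral}.
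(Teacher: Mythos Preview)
Your proposal is correct and follows essentially the same route as the paper: reduce to a primitive binary form $g$ via the LLL-based construction of Proposition~\ref{prop:samplingprimegeneral}, then sample uniformly in $\{g\le\rho^2\}$ with Lemma~\ref{lem:samplingbinary} and accept when $g(x,y)$ is a prime in $[\rho,\rho^2]$, the acceptance probability being controlled by Theorem~\ref{thm:primebinaryform}. The paper's proof is terse (three sentences), whereas you spell out the lattice-point and prime-count estimates and the choice of $c$; this extra detail is sound, and the only cosmetic point is that primality testing does not require GRH.
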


\begin{proof}
As in the proof of Proposition~\ref{prop:samplingprimegeneral}, $\mathscr A$ can compute a sub-basis of $f$ that induces a primitive binary quadratic form $g$ of discriminant at most $O_\varepsilon\left(\left(2^{r(r-1)}|\disc(f)|\right)^{1+\varepsilon}\right)$.
Applying Lemma~\ref{lem:samplingbinary}, one can sample uniformly random pairs $(x,y)$ such that $g(x,y) \leq \rho^2$. From Theorem~\ref{thm:primebinaryform}, $g(x,y)$ is prime and larger that $\rho$ with good probability, provided that $\rho$ is large enough.
\end{proof}

\subsection{Computing equivalent ideals of prime norm}
The above results will be important in the rest of the article, and we can already prove them useful with a first important application.
Consider a maximal order $\cO$ in $B_{p,\infty}$ and a left $\cO$-ideal $I$. We can compute an equivalent ideal $J$ of prime norm as an immediate consequence of Proposition~\ref{prop:samplingprimegeneral}.

\begin{algorithm}[h]
 \caption{$\mathname{EquivPrimeIdeal}_\varepsilon(I)$}\label{alg:EquivPrimeIdeal}
 \begin{algorithmic}[1]
 \REQUIRE {A left ideal $I$ in a maximal order $\cO$.}
 \ENSURE {An ideal $J$ of prime norm, and an element $\alpha \in I$ such that $J = I\overline{\alpha}/\Nrd(I)$.}

\STATE $\alpha \gets $ an element $\alpha \in I$ such that $q_I(\alpha)$ is prime; \COMMENT{Proposition~\ref{prop:samplingprimegeneral}}
\RETURN $J = I\overline{\alpha}/\Nrd(I)$, and $\alpha$.
  \end{algorithmic}
 \end{algorithm}
 
 \begin{theorem}[GRH]\label{thm:algEquivPrimeIdeal}
For any $\varepsilon>0$, Algorithm~\ref{alg:EquivPrimeIdeal} is correct and runs in expected polynomial time in $\log \Nrd(I)$ and~$\log p$, and the output $J$ has reduced norm $\Nrd(J) = O_\varepsilon (p^{2+\varepsilon})$.
 \end{theorem}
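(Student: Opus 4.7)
The plan is to observe that Theorem~\ref{thm:algEquivPrimeIdeal} is essentially a direct corollary of Proposition~\ref{prop:samplingprimegeneral} applied to the normalised norm form $q_I$, combined with the standard ideal construction recalled in the KLPT lemma (\cite[Lemma~5]{KLPT14}) stated just before Section~\ref{sec:introquat}.

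First, I would verify that $q_I : I \to \Z$ satisfies the hypotheses of Proposition~\ref{prop:samplingprimegeneral}. Since $B_{p,\infty}$ is a definite quaternion algebra, the reduced norm is a positive definite quadratic form on the rank-$4$ lattice $I$, so $q_I$ is positive definite of dimension $r=4$. It is integral because $\Nrd(I) = \GCD(\Nrd(\alpha) \mid \alpha \in I)$, and it is primitive by the very definition of this normalisation. As recalled in the excerpt, $q_I$ has discriminant $p^2$. Its Gram matrix, expressed in any $\Z$-basis of $I$, can be computed in time polynomial in $\log p$ and the binary length of a basis of $I$ (which is polynomial in $\log p$ and $\log \Nrd(I)$), so $\length(q_I)$ is polynomial in these parameters.

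Next, applying Proposition~\ref{prop:samplingprimegeneral} with $r = 4$ and some fixed $\varepsilon' > 0$ yields, in expected polynomial time in $\length(q_I)$, a vector $\alpha \in I$ such that $q_I(\alpha)$ is prime and bounded by
\[
O_{\varepsilon'}\bigl((2^{12} \cdot p^2)^{1+\varepsilon'}\bigr) = O_{\varepsilon}(p^{2+\varepsilon}),
\]
after absorbing the constant $2^{12}$ and rescaling $\varepsilon' \mapsto \varepsilon$. By the KLPT lemma, the lattice $J = I\overline{\alpha}/\Nrd(I)$ is a left $\cO$-ideal equivalent to $I$ with $\Nrd(J) = q_I(\alpha)$, which is prime and at most $O_\varepsilon(p^{2+\varepsilon})$ as required.

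Finally I would check the running time claim: the dominant cost is the call to Proposition~\ref{prop:samplingprimegeneral}, which is polynomial in $\length(q_I)$ and hence polynomial in $\log p$ and $\log \Nrd(I)$; the final ideal operation $J = I\overline{\alpha}/\Nrd(I)$ is a routine lattice computation of the same order. There is no real obstacle here — the proof is a one-line reduction to Proposition~\ref{prop:samplingprimegeneral} — the substantive content has already been carried out in establishing that proposition (itself relying on Theorem~\ref{thm:primebinaryform} and thus on GRH).
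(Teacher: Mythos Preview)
Your proposal is correct and matches the paper's own proof, which is the one-line observation that $q_I$ is a primitive, positive definite, integral quadratic form of rank~$4$ and discriminant~$p^2$, so Proposition~\ref{prop:samplingprimegeneral} applies directly. Your write-up simply unpacks the hypotheses and the bound more explicitly than the paper does.
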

 
\begin{proof}
It follows from Proposition~\ref{prop:samplingprimegeneral}, and the fact that $q_I : I \rightarrow \Z$ is a primitive, positive-definite, integral quadratic map of discriminant $p^2$. 
\end{proof}

\begin{remark}
Recall that our efforts are focused on provability, and the constants we obtain are certainly not tight. In~\cite[Section 3.1]{KLPT14}, the analogue heuristic algorithm is  expected to return $J$ of norm $\Nrd(J) = \tilde O (p^{1/2})$ most of the time, and they argue that in the worst case, one could possibly obtain $\Nrd(J) = \tilde O(p)$.
\end{remark}


For our applications, we need a slightly more powerful version.
 \begin{proposition}[GRH]\label{prop:algEquivPrimeIdealLargeNonQuad}
There is a constant $c$ and an algorithm which on input a left ideal $I$, a bound $\rho > p^c$, and a prime $\ell \neq p$, returns an ideal $J$ equivalent to $I$ such that $\Nrd(J)$ is a prime between $\rho$ and $\rho^2$, and $\ell$ is a non-quadratic residue modulo $\Nrd(J)$, and runs in polynomial time in $\log \Nrd(I)$,~$\log p$, and $\ell$.
 \end{proposition}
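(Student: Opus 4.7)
The plan is to adapt Proposition~\ref{prop:samplingprimegenerallarge} and Algorithm~\ref{alg:EquivPrimeIdeal} by enforcing an extra congruence on the sampled prime norm $N$ that forces $\ell$ to be a non-quadratic residue modulo $N$.

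First, as in Proposition~\ref{prop:samplingprimegenerallarge}, I would extract from the ideal norm form $q_I$ a primitive, positive definite, integral binary quadratic sub-form $g$, of discriminant $D = O_\varepsilon(p^{2+\varepsilon})$, so that prime values $N = g(x,y)$ correspond to equivalent left ideals $J = I\overline{\alpha}/\Nrd(I)$ of prime norm $N$, with $\alpha = xu + yv$ for the chosen sub-basis $u,v \in I$.

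Next, use quadratic reciprocity to rewrite the condition ``$\ell$ is a non-QR modulo $N$'' as a congruence $N \bmod 4\ell \in S$ for an explicit set $S \subseteq (\Z/4\ell\Z)^*$ of size $\tfrac 1 2 \phi(4\ell)$. A brute-force enumeration over pairs $(x_0, y_0) \in (\Z/4\ell\Z)^2$, costing $\poly(\ell)$ operations, then produces a representative with $g(x_0, y_0) \bmod 4\ell \in S$; existence is a local statement at each prime power dividing $4\ell$ and holds because $\ell \neq p$ and $g$ is primitive, so that modulo each such prime power the values of $g$ cover both QR and non-QR classes.

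Then sample $(x,y) \equiv (x_0, y_0) \pmod{4\ell}$ with $g(x,y) \leq \rho^2$ uniformly at random, by a direct adaptation of Lemma~\ref{lem:samplingbinary} to the shifted sub-lattice $(x_0,y_0) + 4\ell \Z^2$, which carries its own positive definite binary quadratic form of volume $(4\ell)^2 \Vol(g)$. By construction $g(x,y)$ lies in the desired residue class modulo $4\ell$, so the non-QR condition on $\ell$ is automatic.

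The main technical obstacle is lower-bounding the probability that $g(x,y)$ is prime in $[\rho, \rho^2]$. This requires refining Theorem~\ref{thm:primebinaryform} to count primes represented by $g$ lying in a fixed residue class modulo $4\ell$. Such a refinement follows from the effective Chebotarev density theorem of Lagarias--Odlyzko~\cite{LO77} under GRH, applied to the compositum of the ring class field of discriminant $D$ with $\Q(\zeta_{4\ell})$; it yields on the order of $\rho^2/(h(D)\phi(4\ell)\log \rho)$ such primes in $[\rho, \rho^2]$, with error $O(\rho \log(D\ell\rho))$. Choosing the constant $c$ large enough that $\rho > p^c$ makes the main term dominate, giving per-trial success probability $\Omega(1/\poly(\log \rho, \ell))$. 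The expected total running time is then polynomial in $\log \Nrd(I)$, $\log p$, and $\ell$, and the algorithm outputs the corresponding ideal $J = I\overline{\alpha}/\Nrd(I)$.
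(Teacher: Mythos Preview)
Your approach is essentially correct and closely related to the paper's, but the paper organises the steps differently and thereby avoids both the refined Chebotarev argument and a small gap in your existence claim.

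The paper does not first pass to a binary sub-form and then restrict to a coset $(x_0,y_0)+4\ell\Z^2$. Instead it works directly with the rank-$4$ lattice $I$: it picks any $x\in I$ with $q_I(x)\equiv 1\bmod 4$ and $q_I(x)$ a non-residue modulo $\ell$ (such $x$ exists because $q_I$ has discriminant $p^2$, hence is non-degenerate modulo $4\ell$ and, being of rank $4$, universal there), and replaces $I$ by the genuine \emph{sublattice} $\Lambda=\Z x+4\ell I$. For any $y=ax+4\ell z\in\Lambda$ one has $q_I(y)\equiv a^2q_I(x)\bmod 4\ell$, so whenever $q_I(y)$ is prime it is automatically $\equiv 1\bmod 4$ and a non-residue modulo $\ell$, whence quadratic reciprocity gives $\bigl(\tfrac{\ell}{q_I(y)}\bigr)=-1$. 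Since $\Lambda$ is a sublattice (not a shifted coset), $q_I|_\Lambda$ is again a primitive integral quadratic form, so Proposition~\ref{prop:samplingprimegenerallarge} applies verbatim; no extension of Theorem~\ref{thm:primebinaryform} to an auxiliary congruence modulo $4\ell$ is needed. The case $\ell=2$ is handled by the analogous sublattice inside $I/8I$.

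Your route also works, but note a small gap: the assertion that a \emph{primitive} binary form $g$ represents both residues and non-residues modulo $\ell$ is false in general (for example $g(x,y)=x^2+\ell y^2$ represents only squares modulo $\ell$). This can occur whenever $\ell\mid\disc(g)$, and nothing in the construction of the binary sub-form from Proposition~\ref{prop:samplingprimegeneral} prevents that. The fix is easy---choose the sub-basis $(u,v)$ so that $\ell\nmid\disc(g)$, which is possible since $\ell\nmid\disc(q_I)=p^2$---but the paper's sublattice trick sidesteps the issue entirely and lets one reuse the existing propositions without modification.
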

 \begin{proof}
Apply Algorithm~\ref{alg:EquivPrimeIdeal} with two modifications. First, we use Proposition~\ref{prop:samplingprimegenerallarge} instead of Proposition~\ref{prop:samplingprimegeneral}. Second, assuming $\ell \neq 4$ we consider a sublattice $4\ell I \subset \Lambda \subset I$ in place of $I$, where
the quotient $\Lambda/4\ell I$ is generated by any element $x$ such that $q_I(x) \equiv 1 \bmod 4$ and $q_I(x)$ is a non-quadratic residue modulo $\ell$. It follows from quadratic reciprocity that for any $y$ in the lattice, when $q_I(y)$ is prime, then $\ell$ is a non-quadratic residue modulo $q_I(y)$. Similarly, if $\ell = 2$, we consider a sublattice $8 I \subset \Lambda \subset I$ where the quotient $\Lambda/8 I$ is generated by any element $x$ such that $q_I(x) \equiv 3 \text{ or } 5 \bmod 8$.
 \end{proof}

\section{Representing integers with quadratic forms and primes}\label{sec:analysis}

\noindent
In this section, we count the number of ways to represent an integer $n$ in the form $a\ell + bf(x,y)$, where the integers $a$ and $b$ and the quadratic form $f$ are fixed, and $\ell$ is required to be prime. The bounds we obtain are key to the analysis of algorithms designed in the following sections. The proof resorts to analytic number theory.  The reader only interested in computational applications can safely read up to Corollary~\ref{coro:coromaintheoremthatimpliesklpt} before skipping to the next section.\\

We fix the following notation for the rest of the section. Let $f$ be a primitive, integral, positive definite, binary quadratic form of discriminant $f_\chi f_0^2$ where $f_\chi$ is fundamental. Let $v = f_\chi f_0$.
Let $a,b,u$ be positive integers with $\GCD(a,b) = \GCD(b,v) = \GCD(u,v) = \GCD(a,f_\chi) = 1$. 
Let $\chi$ be the Kronecker symbol $\chi(m) = \left(\frac{f_\chi}{m}\right)$, primitive of conductor $f_\chi$.
Let $n$ be a positive integer such that $\GCD(a,n) = \GCD(b,n) =\GCD(n-au, f_0) = 1$. 
Finally, let
$$\mathscr S_n(f) = \{(\ell,x,y) \mid a\ell + bf(x,y) = n, \text{ where }x,y,\ell \in \Z, \ell \text{ is prime, and } \ell \equiv u \bmod v\}.$$
The goal of this section is to obtain lower bounds on the size of $\mathscr S_n(f)$.\\

The problem at hand is a generalisation of the classic problem of Hardy and Littlewood~\cite{HL23} of representing integers as $\ell + x^2 + y^2$, where $\ell$ is prime.
The \emph{number of representations} of an integer $N$ by $f$ is
$r(N;f) = \#\{(x,y) \in \Z^2 \mid f(x,y) = N\}.$
Following a classical approach to the Hardy and Littlewood problem, we can write
\begin{align*}
\#\mathscr S_n(f) &= \sum_{\substack{\ell \leq n/a\\ \ell\text{ prime}\\ \ell \equiv n/a \bmod b\\ \ell \equiv u \bmod v}} r\left(\frac{n-a\ell}{b};f\right).
\end{align*}
Unfortunately, controlling $r(N,f)$ is in general a difficult task. However, we know more about the number of representations of $N$ in the genus of $f$. We indeed have the following classical theorem (see for instance~\cite{Pall33}).


\begin{theorem}\label{thm:sumkronecker}
Let $f$ be a primitive, integral, binary quadratic form of discriminant $D = dm^2$, where $d$ is fundamental.
For any $N>0$ such that $\GCD(N,m)=1$, the number of representations of $N$ by forms of discriminant $D$ is
$$w\sum_{\nu\mid N} \chi(\nu),$$
where $\chi(\nu) = \left(\frac d \nu \right)_K$ is the Kronecker symbol, and $w = 4$ when $d = 4$, $w = 6$ when $d = 3$, and $w = 2$ otherwise.
\end{theorem}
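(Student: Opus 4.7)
The plan is to reduce the statement to the classical count of ideals in a quadratic order via the bijection between binary quadratic forms of discriminant $D$ and proper ideal classes of that order. Set $K = \Q(\sqrt{d})$ with maximal order $\cO_K$, and $\cO_m = \Z + m\cO_K$ the order of conductor $m$, whose discriminant is precisely $D = dm^2$. The unit group $\cO_m^\times$ has order $w$ exactly in the statement's three cases (roots of unity in $\Q(i)$, in $\Q(\zeta_3)$, or just $\pm 1$), so the task splits into (a) translating representations into ideal counts, and (b) counting those ideals.

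For step (a), I would use the standard map sending a form $ax^2 + bxy + cy^2$ to the invertible $\cO_m$-ideal $\mathfrak{a} = [a,\, (-b+\sqrt{D})/2]$ of norm $a$, and sending a solution $(x,y)$ of $f(x,y) = N$ to the element $\alpha = ax + ((-b+\sqrt{D})/2)\, y \in \mathfrak{a}$; the ideal $(\alpha)\mathfrak{a}^{-1}$ is then an integral ideal of norm $N$ lying in the inverse class of $\mathfrak{a}$, and two pairs $(x,y)$ yielding associate $\alpha$'s (differing by a factor in $\cO_m^\times$) give the same ideal. Summing over a set of representatives for the form classes of discriminant $D$, the representations of $N$ biject, after quotienting by $\cO_m^\times$, with invertible integral ideals of $\cO_m$ of norm $N$. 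Hence the total representation count equals $w$ times the number of such ideals.

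Step (b) is the main obstacle, because $\cO_m$ fails to be Dedekind when $m>1$; the hypothesis $\GCD(N,m)=1$ is tailored precisely to bypass this. I would show, by a local analysis at each prime dividing $m$, that the extension–contraction maps $\mathfrak{b}\mapsto \mathfrak{b}\cO_K$ and $\mathfrak{B}\mapsto \mathfrak{B}\cap \cO_m$ are mutually inverse, norm-preserving bijections between invertible $\cO_m$-ideals prime to $m$ and integral $\cO_K$-ideals prime to $m$; any ideal of norm $N$ automatically lies on both sides by the coprimality hypothesis. Once this transfer is justified, the count reduces to the Dedekind ring $\cO_K$, where multiplicativity and the usual case analysis of the splitting behaviour of primes (split, inert, ramified) shows that the number of integral $\cO_K$-ideals of norm $N$ equals the multiplicative function $\sum_{\nu\mid N}\chi(\nu)$ with $\chi$ the Kronecker symbol attached to the fundamental discriminant $d$. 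Combining (a) and (b) yields $w\sum_{\nu\mid N}\chi(\nu)$; standard references such as Cox's \emph{Primes of the form $x^2+ny^2$} or Cohen's \emph{A Course in Computational Algebraic Number Theory} supply the routine verifications I have glossed over.
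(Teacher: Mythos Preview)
The paper does not prove this theorem; it is quoted as a classical result with a citation to Pall~(1933). Your outline via the form--ideal correspondence for the order $\cO_m$ of conductor $m$, followed by the norm-preserving transfer to the maximal order $\cO_K$ under the hypothesis $\GCD(N,m)=1$, and finally the Dedekind count $\sum_{\nu\mid N}\chi(\nu)$, is exactly the standard argument and is correct.

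One minor caveat: your identification $w = \#\cO_m^\times$ is the right invariant governing the multiplicity in step~(a), but this makes $w$ a function of $D$, not of the fundamental part $d$ alone. The two disagree only when $d\in\{-3,-4\}$ and $m>1$, where $\cO_m^\times=\{\pm 1\}$ has order $2$; the paper's phrasing glosses over this edge case, and your argument actually gives the correct value there.
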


Let $(f_i)_{i=1}^t$ be a list of class representatives for each form of same discriminant as $f$. The main result of this section is the following theorem.
\begin{theorem}[GRH]\label{thm:estimationofsumSi}
There exists an absolute constant $\delta > 0$ such that for any integer $n \geq \max(a,b,v)^{1/\delta}$, we have
\begin{equation*}
\left|\sum_{i=1}^t\#\mathscr S_n(f_i) - \frac{n}{a\log(n/a)}\frac{w}{\phi(b)\phi(v)}\left(1 + \chi\left(\frac{n - ua}{b}\right)\right)L(1,\chi)C(\chi,anf_0,b) \right| = O(n^{1-\delta}),
\end{equation*}
where $L(s,\chi)$ is the Dirichlet $L$-function, the integer $w$ is as in Theorem~\ref{thm:sumkronecker}, and
\begin{align*}
C(\chi,m,s) &= \prod_{\ell\nmid ms}\left(1 + \frac{\chi(\ell)}{\ell(\ell-1)}\right)\prod_{\ell \mid m }\left(1 - \frac{\chi(\ell)}{\ell}\right).
\end{align*}
\end{theorem}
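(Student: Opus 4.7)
The strategy follows the classical Hardy--Littlewood method for sums of the shape $\sum_{p \leq N} r(N-p)$ with $r$ counting representations by a binary form. Applying Theorem~\ref{thm:sumkronecker} to $N_\ell = (n-a\ell)/b$, and noting that $\ell \equiv u \bmod v$ forces $N_\ell \equiv (n-au)\bar{b} \bmod v$ so that $\gcd(N_\ell, f_0) = 1$ by the hypotheses $\gcd(n-au, f_0) = \gcd(b, v) = 1$, one rewrites
$$\sum_{i=1}^t \#\mathscr S_n(f_i) = w \sum_{\substack{\ell \leq n/a,\, \ell \text{ prime} \\ \ell \equiv u \bmod v \\ a\ell \equiv n \bmod b}} \sum_{\nu \mid N_\ell} \chi(\nu).$$
Interchanging summations, this becomes $w\sum_{\nu} \chi(\nu)\, \pi(n/a;\, M_\nu, r_\nu)$, where $M_\nu = \mathrm{lcm}(v, b\nu)$ and $r_\nu$ is obtained by CRT from $\ell \equiv u \bmod v$ and $a\ell \equiv n \bmod b\nu$ (the term vanishing when these residues are incompatible, which in particular forces $\gcd(\nu, a) = 1$).

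The analytic input is the GRH-conditional prime counting theorem $\pi(X; M, r) = \mathrm{Li}(X)/\phi(M) + O(X^{1/2}\log(MX))$ for $\gcd(r,M)=1$. Choose a cutoff $Y = n^{1/2 - \delta_0}$ for a small $\delta_0 > 0$. For $\nu \leq Y$, substitute the estimate directly: the aggregate error is $O(Y\, n^{1/2+\varepsilon}) = O(n^{1 - \delta_0+\varepsilon})$. For $\nu > Y$, apply the Dirichlet hyperbola trick inside the original double sum by setting $\nu' = N_\ell/\nu$, so the condition $\nu > Y$ becomes $\nu' < N_\ell/Y \leq n^{1/2+\delta_0}/(ab)$; since $\chi$ is real-valued and totally multiplicative, $\chi(\nu) = \chi(N_\ell)\chi(\nu')$ whenever $\chi(\nu') \neq 0$, and a further mod-$f_\chi$ reduction (using $f_\chi \mid v$ and $\gcd(b,f_\chi)=1$) gives $\chi(N_\ell) = \chi((n-au)/b)$, a constant independent of $\ell$. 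Re-swapping summations produces a second copy of the main term multiplied by $\chi((n-au)/b)$, accounting for the factor $(1 + \chi((n-au)/b))$.

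Combining the two halves yields
$$\sum_{i=1}^t \#\mathscr S_n(f_i) = w\,\mathrm{Li}(n/a)\,\bigl(1 + \chi((n-au)/b)\bigr)\,\Xi + O(n^{1-\delta}),$$
where $\Xi = \sum_\nu{}' \chi(\nu)/\phi(M_\nu)$, the prime indicating the compatibility constraints on $\nu$. Since the summand is multiplicative and the constraints factor over primes, $\Xi$ decomposes as an Euler product. A direct computation shows that at a prime $\ell \nmid av$ the local factor equals $(\ell^2-\ell+\chi(\ell))/((\ell-1)(\ell-\chi(\ell)))$; primes dividing $f_\chi$ contribute trivially since $\chi(\ell)=0$, while primes dividing $a$, $n$, $b$ or $f_0$ receive local perturbations dictated by the compatibility indicator. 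Matching these against $L(1,\chi) = \prod_\ell \ell/(\ell-\chi(\ell))$ and the definition of $C(\chi, anf_0, b)$ shows that $\Xi = L(1,\chi)\,C(\chi, anf_0, b)/(\phi(b)\phi(v))$. Together with $\mathrm{Li}(n/a) = n/(a\log(n/a)) + O(n/(a\log^2 n))$ this produces the stated asymptotic.

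The principal obstacle is the Euler product bookkeeping: checking prime-by-prime that $\phi(M_\nu)$, the compatibility indicator, and the contribution from primes dividing $v$ collectively reproduce $C(\chi, anf_0, b)$ with no spurious factors, and locating where each of the coprimality hypotheses $\gcd(a,f_\chi) = \gcd(a,n) = \gcd(b,n) = \gcd(n-au,f_0) = 1$ is genuinely used. A secondary technical issue is controlling the tail of $\Xi$ beyond the cutoff: one bounds $\sum_{\nu > Y}{}' |\chi(\nu)|/\phi(M_\nu) = O(Y^{-1+\varepsilon})$ using $\phi(M_\nu) \gg_\varepsilon \nu^{1-\varepsilon}$ and absolute convergence, so that completing the truncated main-term sum to $\nu = \infty$ contributes only $O(n^{1/2+\delta_0+\varepsilon})$ further error, which remains compatible with the target $O(n^{1-\delta})$ after a suitable choice such as $\delta = \delta_0/2$.
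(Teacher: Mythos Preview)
Your overall structure---express $\sum_i \#\mathscr S_n(f_i)$ via Theorem~\ref{thm:sumkronecker}, swap the $\ell$- and divisor-sums, and split the divisor range by a Dirichlet hyperbola---matches the paper's. The gap is in the error control on the \emph{long} side of the hyperbola.

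You bound each $\pi(X; M_\nu, r_\nu)$ individually via GRH with error $O(X^{1/2}\log(M_\nu X))$ and sum. On the short side $\nu \leq Y = n^{1/2-\delta_0}$ this indeed gives $O(n^{1-\delta_0+\varepsilon})$. But after the swap $\nu \mapsto \nu' = N_\ell/\nu$, the complementary range is $\nu' \leq N_\ell/Y \leq n^{1/2+\delta_0}/b$ (not $n^{1/2+\delta_0}/(ab)$: note $N_\ell=(n-a\ell)/b$ can be as large as $n/b$). Summing the individual GRH error over this longer range gives an error of order $(n^{1/2+\delta_0}/b)\cdot(n/a)^{1/2}$, which for bounded $a,b$ exceeds $n$ and swamps the main term. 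No choice of cutoff repairs this: the two ranges together have length at least $2\sqrt{N_\ell}$, so the accumulated pointwise-GRH error is always $\gg n/\sqrt{ab}$, while the main term is only $\asymp n/(ab\log n)$. This is exactly the classical obstruction in the Hardy--Littlewood problem; pointwise GRH is not enough.

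The paper does not use individual GRH here. After passing to a von Mangoldt-weighted sum, it invokes Theorem~\ref{thm:ABL} (adapted from~\cite{ABL20}), a \emph{signed} Bombieri--Vinogradov-type estimate bounding $\sum_{q\leq Q}\bigl(\psi(x;bq,\cdot)-x/\phi(bq)\bigr)$ by $x^{1-\delta}$ for $Q$ up to $x^{1/2+\delta}$; the absence of absolute values is essential to reach beyond level $x^{1/2}$. The sum is then split into $S_1,S_2,S_3$, where $S_3$ isolates the boundary contribution arising because the upper limit of the $\ell$-sum on the complementary side genuinely depends on $\nu'$ (a point your ``re-swapping summations'' elides). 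A secondary issue: your identity $\chi(\nu)=\chi(N_\ell)\chi(\nu')$ fails when $\gcd(N_\ell,f_\chi)>1$; the paper handles this inside $S_1$ by writing $d=d_\chi d'$ with $d_\chi\mid f_\chi^\infty$ and proving via an explicit character-sum computation that only $d_\chi=1$ contributes.
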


The following (immediate) corollary is more convenient for the forthcoming applications.
\begin{corollary}[GRH]\label{coro:coromaintheoremthatimpliesklpt}
There exists a constant $c > 0$ such that for any positive integer $n$ with $\log(n) \geq c \log\max( a, b, v)$, we have
\begin{equation*}
\sum_{i=1}^t\#\mathscr S_n(f_i) \geq \frac{n}{abv}\frac{1 + \chi\left(\frac{n - ua}{b}\right)}{(\log n)^c}.
\end{equation*}
\end{corollary}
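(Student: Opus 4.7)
The plan is to derive the corollary directly from Theorem~\ref{thm:estimationofsumSi} by bounding the main term from below by a quantity of the shape $n/(abv (\log n)^{c_0})$ and then verifying that the error term $O(n^{1-\delta})$ is dominated by half of this main term once $n$ is polynomially larger than $\max(a,b,v)$. First I would handle the trivial case: if $\chi((n-ua)/b) = -1$, then $1 + \chi((n-ua)/b) = 0$ and the right-hand side of the desired inequality vanishes, so the bound is immediate. In the remaining cases the factor $1 + \chi((n-ua)/b)$ is at least $1$, so it suffices to bound the other factors of the main term $M$ from below.

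Next I would bound these factors one by one using standard estimates: $\phi(b)\phi(v) \leq bv$, giving a factor $(bv)^{-1}$; $a\log(n/a) \leq a\log n$; under GRH, Littlewood's theorem gives $L(1,\chi) \gg 1/\log\log|f_\chi|$, which is at least $(\log n)^{-o(1)}$ under the hypothesis $v \leq n^{O(1)}$; and for the product $C(\chi, anf_0, b)$, the infinite Euler factor $\prod_{\ell \nmid anf_0 b}(1 + \chi(\ell)/(\ell(\ell-1)))$ converges absolutely to a quantity bounded from above and below by absolute constants, while the finite correction $\prod_{\ell \mid anf_0}(1 - \chi(\ell)/\ell)$ is bounded in absolute value from below by $\prod_{\ell \mid anf_0}(1 - 1/\ell) = \phi(anf_0)/(anf_0) \gg 1/\log\log(an)$. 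Multiplying these estimates yields $M \geq n/(abv (\log n)^{c_0})$ for some absolute constant $c_0$.

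Finally I would absorb the error. The hypothesis $\log n \geq c \log\max(a,b,v)$ implies $abv \leq n^{3/c}$, so choosing $c$ large relative to $c_0$ and $1/\delta$ gives $n^{1-\delta} \ll n/(abv (\log n)^{c_0+1})$; hence the error term is at most half of $M$, and
$$\sum_{i=1}^t \#\mathscr S_n(f_i) \;\geq\; M - O(n^{1-\delta}) \;\geq\; \frac{M}{2} \;\geq\; \frac{n}{abv (\log n)^c},$$
after enlarging $c$ if necessary to absorb the constant factor $2$ and the reincorporation of the $1 + \chi((n-ua)/b)$ factor (which is $\geq 1$ in all non-trivial cases). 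The only input that goes beyond elementary manipulation is Littlewood's conditional lower bound on $L(1,\chi)$, which is consistent with the (GRH) hypothesis carried from Theorem~\ref{thm:estimationofsumSi}; the main bookkeeping subtlety is ensuring that all the polylogarithmic losses from the various estimates can be collected into a single exponent $c$.
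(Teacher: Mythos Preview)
Your argument is correct and is exactly the kind of routine estimation the paper has in mind when it calls the corollary ``immediate'' and gives no proof: you bound $\phi(b)\phi(v)\leq bv$, use Littlewood's GRH lower bound $L(1,\chi)\gg 1/\log\log|f_\chi|$, bound $C(\chi,anf_0,b)$ below via $\phi(anf_0)/(anf_0)\gg 1/\log\log(anf_0)$, and then choose $c$ large enough (in particular $c>3/\delta$) so that $abv\leq n^{3/c}$ makes the $O(n^{1-\delta})$ error negligible. There is nothing to compare against, since the paper provides no details beyond invoking Theorem~\ref{thm:estimationofsumSi}.
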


\begin{remark}
If $\#\mathscr S_n(f_i) \neq 0$ and $\#\mathscr S_n(f_j) \neq 0$, then $f_i$ and $f_j$ must be in the same genus. Therefore, the sums in Theorem~\ref{thm:estimationofsumSi} and Corollary~\ref{coro:coromaintheoremthatimpliesklpt} are actually sums over class representatives of a single genus.
\end{remark}


\subsection{Preliminary results}
We first present a theorem that will be a central tool in the proof of Theorem~\ref{thm:estimationofsumSi}.
It is essentially~\cite[Theorem~2.1]{ABL20} with minor tweaks.
\begin{theorem}[GRH]\label{thm:ABL}
There exists a positive constant $\delta$ with the following property. Let $x \geq 2$, $b,c,d\in\Z_{>0}$, $c_0,d_0 \in \Z$, $\GCD(d_0,d) = \GCD(c_0,c) = 1$, $a_1,a_2 \in \Z\setminus\{0\}$, $\GCD(b,da_1a_2) = 1$ such that
$$Q\leq x^{1/2+\delta},\ \ \ a_1 \leq x^{1+\delta},\ \ \ a_2 \leq x^\delta,\ \ \ b,c,d \leq x^\delta.$$
Then we have
$$\sum_{\substack{q\leq Q\\ \GCD(q,a_1a_2d) = 1\\q\equiv c_0 \bmod c}}\left(
\sum_{\substack{n\leq x\\n \equiv a_1/a_2 \bmod bq\\n \equiv d_0 \bmod d}}\Lambda(n)
- \frac{1}{\phi(qbd)}\sum_{\substack{n\leq x\\\GCD(n,qbd) = 1}}\Lambda(n)
\right) \ll x^{1-\delta},$$
where $\Lambda$ is the von Mangoldt function.
\end{theorem}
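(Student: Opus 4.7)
The statement is a Bombieri--Vinogradov-type estimate going slightly beyond the $x^{1/2}$ barrier, made possible by GRH. As the excerpt admits, it is ``essentially~\cite[Theorem 2.1]{ABL20} with minor tweaks,'' so the plan is to quote the ABL20 result (or really its proof) and show that the tweaks ($c$, $d$, and $a_2$) can be absorbed into it with no serious new ideas.

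The plan is as follows. First I would simplify the inner modulus structure. Since $\GCD(b,da_1a_2)=1$ and $\GCD(q,a_1a_2d)=1$, we have $\GCD(bq,d)=1$, so by the Chinese Remainder Theorem the pair of conditions $n\equiv a_1/a_2 \pmod{bq}$ and $n\equiv d_0\pmod d$ is equivalent to a single congruence $n\equiv \alpha(q)\pmod{qbd}$ for a residue class $\alpha(q)$ determined by $a_1,a_2,d,d_0$. (The coprimality of $a_2$ with $bq$ follows from the hypotheses, so the inverse $1/a_2$ makes sense.) Thus the inner difference is precisely the error in the prime number theorem for the arithmetic progression $\alpha(q)\pmod{qbd}$. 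Next I would handle the outer congruence $q\equiv c_0 \pmod c$ by orthogonality of Dirichlet characters modulo $c$, writing
\[
\mathbf{1}_{q\equiv c_0\,(c)} \;=\; \frac{1}{\phi(c)}\sum_{\chi\bmod c}\overline{\chi}(c_0)\chi(q),
\]
which splits the target sum into $\phi(c)\leq x^\delta$ twisted sums, each of the same shape as in ABL20 but with an extra character twist $\chi(q)$ in the outer variable. Since $c\leq x^\delta$, this only costs a factor $x^\delta$, so it is absorbed after shrinking $\delta$.

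At this point the task is exactly of the form treated in~\cite[Theorem 2.1]{ABL20}: an average over $q\leq Q\leq x^{1/2+\delta}$ of the GRH error term for $\psi(x;qbd,\alpha(q))$, possibly twisted by a fixed character modulo $c$, with $a_1\leq x^{1+\delta}$ and all auxiliary parameters bounded by $x^\delta$. I would then invoke their result (or rerun their argument) with the slightly enlarged modulus $bd$ in place of $b$: under GRH, the error for each individual modulus $qbd$ is $O(x^{1/2}(\log x)^2)$, and the additional cancellation on average over $q$ — obtained from a large sieve / mean value theorem for Dirichlet polynomials together with the GRH zero-free region — provides the extra $x^{-\delta}$ saving needed to get $O(x^{1-\delta})$. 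The twist by $\chi(q)$ does not harm the large-sieve input since $|\chi(q)|\leq 1$.

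The main obstacle, and the reason the result is not entirely trivial from GRH alone, is the step just described: GRH applied pointwise to each $q\leq Q=x^{1/2+\delta}$ gives a total error of size $Q\cdot x^{1/2}(\log x)^2 = x^{1+\delta+o(1)}$, which is useless. The gain must come from averaging: one expresses the error via zeros of $L(s,\chi\psi)$ for $\chi$ modulo $qbd$, applies the explicit formula, and uses a hybrid large-sieve bound on the relevant Dirichlet polynomials restricted to the GRH critical line. This is precisely the delicate analytic content of~\cite[Theorem 2.1]{ABL20}, which I would cite rather than reprove; the role of my reduction is merely to show that the tweaks (the extra modulus $d$, the denominator $a_2$, and the congruence $q\equiv c_0\pmod c$) fit into its hypotheses after CRT and character orthogonality, at the price of shrinking $\delta$ by a bounded factor.
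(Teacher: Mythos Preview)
Your reduction misreads what \cite[Theorem~2.1]{ABL20} already provides and therefore misses the actual obstruction. That theorem already carries the parameters $c$, $d$, $a_1$, $a_2$ (subject to the restriction $d\mid c^\infty$) and corresponds precisely to the case $b=1$ of the present statement. So your character-orthogonality step for $q\equiv c_0\bmod c$ is unnecessary, and the ``tweaks'' to absorb are not $c,d,a_2$ but rather (i)~lifting the restriction $d\mid c^\infty$, and (ii)~introducing the extra fixed factor $b$ in the inner modulus $bq$.

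Tweak~(i) is indeed minor and is handled in the spirit you suggest: the paper splits $q$ into residue classes modulo $Dc$, where $D$ is the product of primes dividing $d$ but not $c$, so that $d\mid (Dc)^\infty$ and \cite[Theorem~2.1]{ABL20} applies directly; the cost is a harmless factor $\phi(D)\leq x^\delta$. Tweak~(ii) is where your argument has a genuine gap. The outer sum over $q$ does \emph{not} impose $\GCD(q,b)=1$, so your CRT step does not separate the modulus $bqd$ as (varying $q$)$\times$(fixed $bd$): when $q$ and $b$ share a prime, one cannot feed the result back into \cite{ABL20} with $bd$ playing the role of the fixed modulus coprime to $q$. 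The paper handles this by writing $q=q_bq'$ with $q_b\mid b^\infty$ and $\GCD(q',b)=1$. For small $q_b$ one absorbs $bq_b$ into the fixed modulus and applies the already-established $b=1$ case to the sum over $q'$; for large $q_b$ there are at most $(\log Q)^{\omega(b)}\ll x^{\varepsilon}$ admissible values, so a trivial bound on the inner sum suffices. Your proposal does not contain this decomposition, and the phrase ``enlarged modulus $bd$ in place of $b$'' does not address it.
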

\begin{proof}
Observe that if $d\mid c^\infty$, and $b = 1$, this is~\cite[Theorem~2.1]{ABL20}.
First, the condition $d\mid c^\infty$ is removed, and replaced with $\GCD(q,d) = 1$, with the following simple trick. 
Let $\delta_0$ be as in~\cite[Theorem~2.1]{ABL20}, and $\delta = \delta_0/2$.
Let $D$ be the product of prime factors of $d$ that do not divide $c$.
Since $d\mid (Dc)^\infty$ and $Dc \leq x^{\delta_0}$, we can apply~\cite[Theorem~2.1]{ABL20} and obtain that our sum (still with $b=1$) is
\begin{align*}
\sum_{\substack{c_0' \bmod Dc\\c_0' \equiv c_0 \bmod c\\\GCD(c_0',D)=1}}
\sum_{\substack{q\leq Q\\(q,a_1a_2) = 1\\q\equiv c_0' \bmod Dc}}\left(
\sum_{\substack{n\leq x\\n \equiv a_1/a_2 \bmod q\\n \equiv d_0 \bmod d}}\Lambda(n)
- \frac{1}{\phi(qd)}\sum_{\substack{n\leq x\\\GCD(n,qd) = 1}}\Lambda(n)
\right) \ll \varphi(D)x^{1-\delta_0} \leq x^{1-\delta}.
\end{align*}
This proves the theorem in the case $b=1$.\\

Second, let us deal with the case where $b\neq 1$. Let $\delta_1$ be such that the theorem holds with $b=1$. Let $\delta_2 = \delta_1/5$, and assume the conditions of the theorem are met for $\delta_2$. In particular, $bdx^{3\delta_2} \leq x^{\delta_1}$, and for any $\varepsilon>0$ we have
\begin{align*}
&\sum_{\substack{q_b \leq Q\\ q_b \mid b^\infty\\\GCD(q_b,c)=1}}
\sum_{\substack{q\leq Q/q_b\\\GCD(q,a_1a_2db) = 1\\q\equiv c_0/q_b \bmod c}}\left(
\sum_{\substack{n\leq x\\n \equiv a_1/a_2 \bmod q\\
n \equiv a_1/a_2 \bmod bq_b\\n \equiv d_0 \bmod d}}\Lambda(n)
- \frac{1}{\phi(qq_bbd)}\sum_{\substack{n\leq x\\\GCD(n,qbd) = 1}}\Lambda(n)
\right) \\
&\ll \sum_{\substack{q_b \leq x^{3\delta_2}\\ q_b \mid b^\infty\\\GCD(q_b,c)=1}}x^{1-\delta_1}
+ \sum_{\substack{x^{3\delta_2} \leq q_b \leq Q\\ q_b \mid b^\infty\\\GCD(q_b,c)=1}}\sum_{\substack{q\leq Q/q_b}}\left(
\sum_{\substack{n\leq x\\
n \equiv a_1/a_2 \bmod qbq_b}}\Lambda(n)
\right) \\
&\ll_\varepsilon x^{1-\delta_1+3\delta_2}
+ \sum_{\substack{x^{3\delta_2} \leq q_b \leq Q\\ q_b \mid b^\infty\\\GCD(q_b,c)=1}}\sum_{\substack{q\leq Q/q_b}}
\left(\frac{x}{\phi(qbq_b)} + x^{1/2 + \varepsilon}\right)\\
&\leq x^{1-2\delta_2}
+ x^{1-3\delta_2}\sum_{\substack{x^{3\delta_2} \leq q_b \leq Q\\ q_b \mid b^\infty\\\GCD(q_b,c)=1}}\sum_{\substack{q\leq Q/q_b}}
\frac{1}{\phi(q)}
+x^{1/2 + \varepsilon}
\sum_{\substack{x^{3\delta_2} \leq q_b \leq Q\\ q_b \mid b^\infty\\\GCD(q_b,c)=1}}Q/q_b
\\
&\leq x^{1-2\delta_2}
+ x^{1-3\delta_2}t(Q;b,c)\log(Q) + x^{1+\varepsilon - 2\delta_2}t(Q;b,c),
\end{align*}
where $t(Q;b,c)$ is the number of positive integers at most $Q$ whose prime factors divide $b$ but not $c$.
Using the estimates $t(Q;b,c) \leq (\log Q)^{\omega(b)}$ and $\omega(b)\ll\frac{\log(b)}{\log \log(b)}$, get that $t(Q;b,c)\ll_\varepsilon x^{\delta_2 + \varepsilon}$, and we deduce that there is a constant $\delta_3>0$ such that the above is dominated by $x^{1-\delta_3}$.
\end{proof}

We now prove an elementary lemma, which is a slight generalisation of~\cite[Lemma~3]{Hooley57} or~\cite[Lemma~5.2]{ABL20}.

\begin{definition}
For any positive integers $m$ and $s$, and any $x > 0$, let $$\Phi_{m,s}(x) = \sum_{\substack{d \leq x\\\GCD(d,m)  = 1}} \frac{\chi(d)}{\phi(sd)}.$$
\end{definition}



\begin{lemma}\label{lem:estimatingphimsx}
For any $\varepsilon>0$,
\begin{align*}
\Phi_{m,s}(x) = &\frac{1}{\phi(s)}L(1,\chi)C(\chi,m,s) 
+ O_\varepsilon\left(\frac{f_\chi^{1/2+\varepsilon}(ms x)^{\varepsilon}(\log x)^{\omega(s;m)}}{x}\right),
\end{align*}
where 
$C(\chi,m,s)$ is as in Theorem~\ref{thm:estimationofsumSi}, where $\omega(s;m)$ is the number of prime divisors of $s$ that do not divide $m$.
\end{lemma}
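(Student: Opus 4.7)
The strategy is to peel off the part of $d$ that shares primes with $s$, reducing to a partial sum of $\chi(e)/\phi(e)$ with coprimality to $sm$, which is then handled by the standard combination of the Jordan-like identity for $1/\phi$, Pólya–Vinogradov, and partial summation.

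\textbf{Step 1 (Decomposition).} Write every $d$ with $\GCD(d,m)=1$ uniquely as $d = d_1 e$ where $d_1 \mid s^\infty$ and $\GCD(e,s)=1$. Since $\GCD(d_1,m)=1$, the integer $d_1$ is supported on primes dividing $s$ but not $m$, so $\#\{d_1 \leq x : d_1 \mid s^\infty,\ \GCD(d_1,m)=1\} \ll (\log x)^{\omega(s;m)}$. Multiplicativity gives $\phi(sd) = \phi(sd_1)\phi(e)$, and a direct check shows $\phi(sd_1) = d_1\phi(s)$ for such $d_1$. Therefore
\[
\Phi_{m,s}(x) = \frac{1}{\phi(s)}\sum_{d_1}\frac{\chi(d_1)}{d_1}\, T\!\left(\frac{x}{d_1}\right), \qquad T(y) := \sum_{\substack{e\leq y\\ \GCD(e,sm)=1}}\frac{\chi(e)}{\phi(e)}.
\]

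\textbf{Step 2 (Expanding $1/\phi(e)$).} Applying the identity $\frac{1}{\phi(e)} = \frac{1}{e}\sum_{k \mid e,\, \mu^2(k)=1}\frac{1}{\phi(k)}$ and swapping the order of summation yields
\[
T(y) = \sum_{\substack{k\leq y\\ \GCD(k,sm)=1\\ \mu^2(k)=1}}\frac{\chi(k)}{k\phi(k)}\sum_{\substack{e'\leq y/k\\ \GCD(e',sm)=1}}\frac{\chi(e')}{e'}.
\]

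\textbf{Step 3 (Analytic input).} Pólya–Vinogradov gives $|\sum_{n\leq z}\chi(n)|\ll f_\chi^{1/2}\log f_\chi$; removing the coprimality $\GCD(n,sm)=1$ via Möbius inversion multiplies the bound by $2^{\omega(sm)}\ll_\varepsilon (sm)^{\varepsilon}$. Partial summation then produces
\[
\sum_{\substack{e'\leq z\\ \GCD(e',sm)=1}}\frac{\chi(e')}{e'} = L(1,\chi)\prod_{p\mid sm}\!\left(1-\frac{\chi(p)}{p}\right) + O_\varepsilon\!\left(\frac{(msf_\chi)^{\varepsilon} f_\chi^{1/2}}{z}\right).
\]

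\textbf{Step 4 (Extending $k$-sum to infinity).} Inserting the above into Step 2 and extending the $k$-sum to all $k \geq 1$, the tail $\sum_{k>y}\mu^2(k)/(k\phi(k))$ is $O((\log\log y)/y)$ by the bound $1/\phi(k)\ll (\log\log k)/k$. The full $k$-sum factors as an Euler product, giving
\[
T(y) = L(1,\chi)\prod_{p\mid sm}\!\left(1-\frac{\chi(p)}{p}\right)\prod_{p\nmid sm}\!\left(1+\frac{\chi(p)}{p(p-1)}\right) + O_\varepsilon\!\left(\frac{(msyf_\chi)^{\varepsilon} f_\chi^{1/2}}{y}\right),
\]
where the error term absorbs the $k$-tail and the $k$-weighted partial summation error (using $\sum_{k\leq y}1/\phi(k)\ll \log y$, absorbed into $y^\varepsilon$).

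\textbf{Step 5 (Identifying the main term).} Substituting the expression for $T(x/d_1)$ into Step 1 and extending the $d_1$-sum to all $d_1\mid s^\infty$ with $\GCD(d_1,m)=1$, the infinite $d_1$-sum equals $\prod_{p\mid s,\, p\nmid m}(1-\chi(p)/p)^{-1}$ as a geometric Euler product. The Euler factors indexed by $\{p : p\mid s,\, p\nmid m\}$ coming from this $d_1$-sum exactly cancel the corresponding factors in $\prod_{p\mid sm}(1-\chi(p)/p)$ from $T(y)$, leaving the product $\prod_{p\nmid ms}(1+\chi(p)/(p(p-1)))\prod_{p\mid m}(1-\chi(p)/p) = C(\chi,m,s)$. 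This yields the stated main term $\frac{1}{\phi(s)}L(1,\chi)\,C(\chi,m,s)$.

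\textbf{Step 6 (Error aggregation).} For each of the $\ll (\log x)^{\omega(s;m)}$ surviving $d_1 \leq x$, the factor $\chi(d_1)/(d_1\phi(s))$ together with the error of $T(x/d_1)$ contributes an amount bounded by $(msx)^{\varepsilon}f_\chi^{1/2+\varepsilon}/(x\phi(s))$ (since $1/d_1$ cancels the $d_1$ in the error's denominator $x/d_1$). Summing over $d_1$ produces the claimed error term $O_\varepsilon\!\left(f_\chi^{1/2+\varepsilon}(msx)^{\varepsilon}(\log x)^{\omega(s;m)}/x\right)$ after absorbing $\log$-factors into $(msx)^\varepsilon$.

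The main obstacle is purely bookkeeping in Step~5: tracking which Euler factors $(1-\chi(p)/p)$ and $(1+\chi(p)/(p(p-1)))$ appear in each of the three products (from $T(y)$, from the $d_1$-sum, and in the target $C(\chi,m,s)$), and verifying that the cancellations leave exactly the desired product. Everything else is a straightforward application of Pólya–Vinogradov and partial summation; in particular, no use of GRH is needed for this lemma.
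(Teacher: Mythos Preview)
Your proof is correct and follows essentially the same route as the paper: the same decomposition $d=d_1e$ with $d_1\mid s^\infty$ and $\GCD(e,s)=1$, the same evaluation of the inner sum $T(y)=\sum_{e\le y,\,(e,sm)=1}\chi(e)/\phi(e)$, and the same extension of the $d_1$-sum to an Euler product. The only difference is that the paper invokes \cite[Lemma~5.2]{ABL20} as a black box for $T(y)$, whereas your Steps~2--4 reprove that lemma directly via the identity $1/\phi(e)=e^{-1}\sum_{k\mid e,\,\mu^2(k)=1}1/\phi(k)$ and P\'olya--Vinogradov; one small omission is that Step~6 should also record the tail bound $\sum_{d_1>x,\,d_1\mid s^\infty,\,(d_1,m)=1}1/d_1\le (\log x)^{\omega(s;m)}/x$ arising when you extend the $d_1$-sum in Step~5, but this is immediate and absorbed by the stated error.
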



\begin{proof} Decomposing $d$ as $d_s d$ where $(d,s) = 1$, then applying~\cite[Lemma~5.2]{ABL20}, we get
\begin{align*}
\Phi_{m,s}(x) &=
\sum_{\substack{d_s \leq x\\d_s\mid s^\infty,\,\GCD(d_s,m)  = 1}}\frac{\chi(d_s)}{\phi(sd_s)}\sum_{\substack{d \leq x/d_s\\\GCD(d,ms)  = 1}} \frac{\chi(d)}{\phi(d)}\\
& = \sum_{\substack{d_s \leq x\\d_s\mid s^\infty,\,\GCD(d_s,m)  = 1}}\frac{\chi(d_s)}{\phi(sd_s)}\left(L(1,\chi)C(\chi,ms,1) + O\left(\frac{\tau(ms)f_\chi^{1/2}\log (f_\chi )\log(x/d_s)}{x/d_s}\right)\right),
\end{align*}
where $\tau(n)$ is the number of divisors of $n$.
The error term is dominated by
\begin{align*}
\frac{\tau(ms)f_\chi^{1/2}\log f_\chi \log(x)}{x}
\sum_{\substack{d_s \leq x\\d_s\mid s^\infty,\,\GCD(d_s,m)  = 1}}\frac{d_s}{\phi(sd_s)} =\frac{\tau(ms)f_\chi^{1/2}\log (f_\chi) t(x;s,m)\log(x)}{\phi(s)x},
\end{align*}
where $t(x;s,m)$ is the number of integers at most $x$ whose prime factors divide $s$ but not $m$. In particular, $t(x;s,m) \leq \log(x)^{\omega(s;m)}$.
The main term in the lemma follows from the equality
\begin{align*}
\sum_{\substack{d_s \mid s^\infty\\\GCD(d_s,m)  = 1}}\frac{\chi(d_s)}{\phi(sd_s)}
= \frac {1} {\phi(s)}\sum_{\substack{d_s \mid s^\infty\\\GCD(d_s,m)  = 1}}\frac{\chi(d_s)}{d_s}
= \frac {1} {\phi(s)}\prod_{p\mid s,p\nmid m}\left(1- \frac{\chi(p)}{p}\right)^{-1}.
\end{align*}
The latter sum can be cut off to $d_s < x$ with an error dominated by
\begin{align*}
\frac{1}{\phi(s)}\sum_{\substack{d_s \mid s^\infty\\\GCD(d_s,m)  = 1\\d_s > x}}\frac{1}{d_s}.
\end{align*}
An elementary recursion on the number of prime factors of $s$ not dividing $m$ yields
\begin{align*}
\sum_{\substack{d_s \mid s^\infty\\\GCD(d_s,m)  = 1\\d_s > x}}\frac{1}{d_s} \leq \frac{1}{x}(\log x)^{\omega(s)}.
\end{align*}
%
%
Overall, the contributed error is dominated by
\begin{align*}
\frac{1}{\phi(s)}L(1,\chi)C(\chi,ms,1)\sum_{\substack{d_s \mid s^\infty\\(d_s,m)  = 1\\d_s > x}}\frac{1}{d_s}
&\ll \frac{\log (f_\chi)  \log \log(ms)(\log x)^{\omega(s)}}{\phi(s)x},
\end{align*}
where we used the estimates $L(1,\chi) = O(\log f_\chi)$ and $C(\chi,m,s) = O(\log \log m)$, a consequence of the formula $\prod_{\ell \mid n}\left(1-\frac{1}{\ell}\right)^{-1} = O(\log \log n)$.
\end{proof}

\begin{corollary}\label{cor:estimatingphimsx}
For any $\varepsilon>0$, there exists $\delta$ such that if $f_\chi,s\leq x^{\delta}$, then
\begin{align*}
\Phi_{m,s}(x) = &\frac{1}{\phi(s)}L(1,\chi)C(\chi,m,s) 
+ O_\varepsilon\left(\frac{(mx)^\varepsilon}{x}\right).
\end{align*}
\end{corollary}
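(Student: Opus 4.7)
The plan is to deduce the corollary directly from Lemma~\ref{lem:estimatingphimsx} by absorbing the extra factors in its error term into the cleaner shape $(mx)^\varepsilon/x$. I would apply the lemma with a small parameter $\eta$ (to be chosen below in terms of $\varepsilon$ and $\delta$) in place of its $\varepsilon$; the main term is unaffected, so the task reduces to showing
$$\frac{f_\chi^{1/2+\eta}(msx)^\eta(\log x)^{\omega(s;m)}}{x} \;\ll_\varepsilon\; \frac{(mx)^\varepsilon}{x}$$
under the hypotheses $f_\chi, s \leq x^\delta$.

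Two of the three extra factors are painless. Since $f_\chi \leq x^\delta$, I have $f_\chi^{1/2+\eta} \leq x^{\delta(1/2+\eta)}$; since $s \leq x^\delta$, the $s^\eta$ part of $(msx)^\eta$ is at most $x^{\delta\eta}$; the $m^\eta$ factor is automatically at most $m^\varepsilon$ as soon as $\eta \leq \varepsilon$, and the $x^\eta$ is a clean power of $x$ to be folded into the final $x^\varepsilon$.

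The main (though still mild) obstacle is the factor $(\log x)^{\omega(s;m)} \leq (\log x)^{\omega(s)}$, since $\omega(s)$ is unbounded as $s$ grows, and a naive application of $\omega(s) \leq \log_2 s$ would cost an extra $\log\log x$ in the exponent. I plan to split on the size of $s$: for $s \leq e^{\sqrt{\log x}}$, the crude bound $\omega(s) \leq \log_2 s$ already gives $(\log x)^{\omega(s)} = x^{o(1)}$; for larger $s$, the standard estimate $\omega(s) \ll \log s/\log\log s$ combined with $\log\log s \asymp \log\log x$ and $\log s \leq \delta\log x$ yields $\omega(s)\log\log x \ll \delta \log x$, whence $(\log x)^{\omega(s)} \leq x^{O(\delta)}$. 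Combining all three contributions, the error is bounded by $C_\eta\, m^\eta\, x^{\eta + O(\delta)}/x$.

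To conclude, I pick $\eta = \varepsilon/2$ and then shrink $\delta$ so that the hidden $O(\delta)$ constant plus $\eta = \varepsilon/2$ is at most $\varepsilon$. This produces the claimed error $O_\varepsilon((mx)^\varepsilon/x)$. The whole argument is essentially a size bookkeeping under the additional hypothesis $f_\chi, s \leq x^\delta$; no new analytic input beyond Lemma~\ref{lem:estimatingphimsx} is required.
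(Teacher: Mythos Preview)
Your proposal is correct and follows essentially the same route as the paper: the paper's proof is the one-liner ``It follows from Lemma~\ref{lem:estimatingphimsx} and the estimate $\omega(s) = O(\log s/\log\log s)$'', and you have simply spelled out the bookkeeping behind that sentence (your case split on the size of $s$ is one clean way to handle the factor $(\log x)^{\omega(s)}$ when $s$ may be small). The only cosmetic point is that your summary ``$C_\eta\, m^\eta\, x^{\eta + O(\delta)}/x$'' should also carry the $x^{o(1)}$ contribution from the small-$s$ branch, but this is absorbed into the final $x^\varepsilon$ exactly as you intend.
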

\begin{proof}
It follows from Lemma~\ref{lem:estimatingphimsx} and the estimate $\omega(s) = O\left( \frac{\log s}{\log \log s}\right)$.
\end{proof}

\subsection{Proof of Theorem~\ref{thm:estimationofsumSi}}
Following a classical approach to the Hardy and Littlewood problem, and using that $v = f_\chi f_0$ and $\GCD(n-au,f_0)=1$, we have
\begin{align*}
\sum_{i=1}^t\#\mathscr S_n(f_i) &= \sum_{\substack{\ell \leq n/a\\ \ell\text{ prime}\\ \ell \equiv n/a \bmod b\\ \ell \equiv u \bmod v}}\sum_{i=1}^t r\left(\frac{n-a\ell}{b};f_i\right)
= w \sum_{\substack{\ell \leq n/a\\ \ell\text{ prime}\\ \ell \equiv n/a \bmod b\\ \ell \equiv u \bmod v}}
\sum_{d \mid \frac{n-a\ell}{b}}\chi(d).
\end{align*}
The rest of the proof is dedicated to estimating the related sum
$$\sum_{\substack{\ell \leq n/a\\ \ell \equiv n/a \bmod b\\ \ell \equiv u \bmod v}}\Lambda(\ell)
\sum_{d \mid \frac{n-a\ell}{b}}\chi(d),$$
where again, $\Lambda$ is the von Mangoldt function.
The theorem then follows by partial summation.\\

Let $\delta > 0$ be a parameter to be adjusted.
For any $\varepsilon >0$,
The terms where $\ell < n^{1-\delta}$ can trivially be  bounded by $O_\varepsilon(x^{1-\delta+\varepsilon})$. 
We can deal similarly with the terms where $\ell > n/a -  n^{1-\delta}$
Therefore, it is sufficient to consider sums of the form
\begin{align}\label{eq:mainthiswewanttoestimateXY}
&\sum_{\substack{Y < \ell \leq X \\ \ell \equiv u \bmod v \\ \ell \equiv n/a \bmod b}}\Lambda(\ell)
\sum_{d \mid \frac{n-a\ell}{b}}\chi(d),
\end{align}
where $n^{1-\delta} \leq Y < X \leq n/a - n^{1-\delta}$, and $n/(2a) < X$.
Since $(a,n)=1$, the terms where $\GCD(d,an) \neq 1$ can be discarded, and the remaining 
terms are distributed into three parts as
\begin{align*}
\sum_{\substack{Y < \ell \leq X \\ \ell \equiv u \bmod v \\ \ell \equiv n/a \bmod b}}\Lambda(\ell)
\sum_{\substack{d \mid \frac{n-a\ell}{b}\\\GCD(d,an) =1}}\chi(d)
&= \sum_{\substack{Y < \ell \leq X \\ \ell \equiv u \bmod v \\ \ell \equiv n/a \bmod b}}\Lambda(\ell)\left(
\sum_{\substack{d \leq D\\ db \mid n-a\ell\\\GCD(d,an) =1}}\chi\left(\frac{n-a\ell}{db}\right)+
\sum_{\substack{d < \frac{n-a\ell}{Db}\\ db \mid n-a\ell\\\GCD(d,an) =1}}\chi(d)
\right)\\
&= S_1 + S_2 + S_3,
\end{align*}
where $D = X^{1/2}$, and
\begin{align*}
S_1 &= \sum_{\substack{d \leq D\\\GCD(d,an) = 1}}\sum_{\substack{Y < \ell \leq X \\ \ell \equiv u \bmod v\\   \ell \equiv n/a \bmod db}}\Lambda(\ell)
\chi\left(\frac{n-a\ell}{db}\right),\\
%
S_2 &= \sum_{\substack{d \leq \frac{n-Xa}{Db}\\\GCD(d,an) = 1}}\chi(d)\sum_{\substack{Y < \ell \leq X \\ \ell \equiv u \bmod v\\   \ell \equiv n/a \bmod db}}\Lambda(\ell),\\
S_3 &= \sum_{\substack{\frac{n-Xa}{Db} < d \leq \frac{n-Ya}{Db}\\\GCD(d,an) = 1}}\chi(d)\sum_{\substack{Y < \ell \leq \frac{n-bdD}{a} \\ \ell \equiv u \bmod v\\   \ell \equiv n/a \bmod db}}\Lambda(\ell).
\end{align*}
\\

\noindent\emph{Estimation of $S_1$.}
Decomposing $d$ as $dd_\chi$ where $(d,f_\chi) = 1$ and $d_\chi \mid f_\chi^\infty$, we have
\begin{align*}
S_1
&=\sum_{\substack{d_\chi \leq D \\ d_\chi \mid f_\chi^\infty\\\GCD(d_\chi,an)=1}}\sum_{\substack{d \leq D/d_\chi\\\GCD(d,f_\chi an) = 1}}\overline\chi(d)\sum_{\substack{Y < \ell \leq X \\ \ell \equiv u \bmod v\\   \ell \equiv n/a \bmod dd_\chi b}}\Lambda(\ell)
\chi\left(\frac{n-a\ell}{d_\chi b}\right)\\
&= \sum_{\substack{d_\chi \leq D \\ d_\chi \mid f_\chi^\infty\\\GCD(d_\chi,an)=1}}
\sum_{y \bmod f_\chi}\overline\chi(y)
\sum_{x \bmod f_\chi}\chi(x)
\sum_{\substack{d \leq D/d_\chi\\\GCD(d,an) = 1 \\ d \equiv y \bmod f_\chi}}
\sum_{\substack{Y < \ell \leq X \\ \ell \equiv u \bmod v\\ \ell \equiv n/a \bmod db \\ \ell \equiv (n-xbd_\chi)/a \bmod f_\chi d_\chi}}\Lambda(\ell).
\end{align*}
Since $\GCD(n-au,f_0) = 1$, the terms where $\GCD(d,f_0) \neq 1$ are zero.
Applying Theorem~\ref{thm:ABL} and the prime number theorem, we have
\begin{align*}
S_1 &= \sum_{\substack{d_\chi \leq D \\ d_\chi \mid f_\chi^\infty\\\GCD(d_\chi,an)=1}}
\sum_{y \bmod  f_\chi}\overline\chi(y)
\sum_{\substack{x \bmod f_\chi\\ua\equiv n - xbd_\chi \bmod \GCD(f_\chi d_\chi,v)}}\chi(x)
\sum_{\substack{d \leq D/d_\chi\\\GCD(d, anf_0) = 1 \\ d \equiv y \bmod  f_\chi}}
\sum_{\substack{Y < \ell \leq X \\ \ell \equiv u \bmod v\\ \ell \equiv n/a \bmod db \\ \ell \equiv (n-xbd_\chi)/a \bmod f_\chi d_\chi}}\Lambda(\ell)\\
&\approx\sum_{\substack{d_\chi \leq D \\ d_\chi \mid f_\chi^\infty\\\GCD(d_\chi,an)=1}}
\sum_{y \bmod  f_\chi}\overline\chi(y)
\sum_{\substack{x \bmod f_\chi \\ua\equiv n - xbd_\chi \bmod f_\chi\GCD(d_\chi,f_0)}}\chi(x)
\sum_{\substack{d \leq D/d_\chi\\\GCD(d, anf_0) = 1 \\ d \equiv y \bmod  f_\chi}}
\frac{\sum_{\substack{Y < \ell \leq X \\ (\ell, dbvf_\chi) = 1}}\Lambda(\ell)}{\phi(db \mathrm{lcm}(v,f_\chi d_\chi))}\\
&\approx\sum_{\substack{d_\chi \leq D \\ d_\chi \mid f_\chi^\infty\\\GCD(d_\chi,an)=1}}
\sum_{y \bmod f_\chi}\overline\chi(y)
\sum_{\substack{x \bmod f_\chi \\ua\equiv n - xbd_\chi \bmod f_\chi\GCD(d_\chi,f_0)}}\chi(x)
\sum_{\substack{d \leq D/d_\chi\\\GCD(d, anf_0) = 1 \\ d \equiv y \bmod  f_\chi}}
\frac{X-Y}{\phi(db f_\chi\mathrm{lcm}(f_0,d_\chi))},
\end{align*}
where the error introduced by the approximations is dominated by
\begin{align*}
&\sum_{\substack{d_\chi \leq D \\ d_\chi \mid f_\chi^\infty\\\GCD(d_\chi,an)=1}}
\sum_{\substack{y \bmod  f_\chi\\\GCD(y,f_\chi) = 1}}
\sum_{\substack{x \bmod f_\chi\\\GCD(x,f_\chi) = 1\\ua\equiv n - xbd_\chi \bmod f_\chi\GCD(d_\chi,f_0)}}
X^{1-\delta_0}
\leq X^{1-\delta_0}\phi(f_\chi)^2t(D;f_\chi,an),
\end{align*}
where $\delta_0$ is the constant of Theorem~\ref{thm:ABL}, and $t(D;f_\chi,an)$ is the number of integers at most $D$ whose prime factors divide $f_\chi$ but not $an$.
Reorganising the terms, our estimation of $S_1$ becomes
$$S_1 \approx (X-Y)\sum_{\substack{d_\chi \leq D \\ d_\chi \mid f_\chi^\infty\\\GCD(d_\chi,an)=1}}
\left(\sum_{\substack{x \bmod f_\chi \\ua\equiv n - xbd_\chi \bmod f_\chi\GCD(d_\chi,f_0)}}\chi(x)\right)
\left(\sum_{\substack{d \leq D/d_\chi\\\GCD(d, anf_0) = 1}}
\frac{\overline\chi(d)}{\phi(dbf_\chi \mathrm{lcm}(f_0,d_\chi))}\right).$$
Let us focus on the first inner sum. Let $m = \gcd(n-ua, f_\chi\GCD(d_\chi,f_0),d_\chi)$. We have
\begin{align*}&
\sum_{\substack{x \bmod f_\chi \\xbd_\chi\equiv n-ua \bmod f_\chi\GCD(d_\chi,f_0)}}\chi(x)\\
&=
\begin{cases}
\sum_{\substack{x \bmod f_\chi \\x\equiv (n - ua)/bd_\chi \bmod f_\chi\GCD(d_\chi,f_0)/m}}\chi(x)&\text{ if } \substack{\GCD(d_\chi,f_\chi\GCD(d_\chi,f_0)) = m \\\text{and }\GCD(n-ua,f_\chi\GCD(d_\chi,f_0)) = m,}\\
0&\text{ otherwise.}\\
\end{cases}
\end{align*}
In the situation where $\GCD(d_\chi,f_\chi\GCD(d_\chi,f_0))=\GCD(n-ua,f_\chi\GCD(d_\chi,f_0))$, let $\alpha = (n - ua)/bd_\chi \bmod f_\chi\GCD(d_\chi,f_0)/m$. From~\cite[(3.9)]{IK04}, we have
\begin{align*}
\sum_{\substack{x \bmod f_\chi \\x\equiv \alpha \bmod f_\chi\GCD(d_\chi,f_0)/m}}\chi(x)
&= \sum_{\substack{y \bmod m/\GCD(d_\chi,f_0)}}\chi(\alpha + y(f_\chi\GCD(d_\chi,f_0)/m))\\
&= \frac{m}{f_\chi\GCD(d_\chi,f_0)}\sum_{\substack{y \bmod f_\chi}}\chi(\alpha + y(f_\chi\GCD(d_\chi,f_0)/m))\\
&= \begin{cases}
\chi(\alpha) &\text{ if } m = \GCD(d_\chi,f_0),\\
0&\text{otherwise.}
\end{cases}
\end{align*}
In summary, and using that $\GCD(n-ua,f_0) = 1$, we get
\begin{align*}
\sum_{\substack{x \bmod f_\chi \\xbd_\chi\equiv n-ua \bmod v}}\chi(x)
&=
\begin{cases}
\chi\left(\frac{n - ua}{b}\right)&\text{ if }\GCD(n-ua,f_\chi) = 1\text{ and }d_\chi = 1,\\
0&\text{ otherwise.}\\
\end{cases}
\end{align*}
We deduce that our estimation of $S_1$ is 
\begin{align*}
S_1 &\approx (X-Y)\frac{1}{\phi(f_\chi f_0)}
\chi\left(\frac{n - ua}{b}\right)
\left(\sum_{\substack{d \leq D\\\GCD(d, anf_0) = 1}}
\frac{\chi(d)}{\phi(db)}\right).
\end{align*}
\\

\noindent\emph{Estimation of $S_2$.} 
Similarly, using $\frac{n-Xa}{Db} \leq \frac{n}{Db} \leq \frac{2aX^{1/2}}{b}$, we can apply Theorem~\ref{thm:ABL} and get up to an admissible error
\begin{align*}
S_2 &= \sum_{y \bmod f_\chi}\chi(y)\sum_{\substack{d \leq \frac{n-Xa}{Db}\\\GCD(d,anf_0) = 1\\d \equiv y \bmod f_\chi}}\sum_{\substack{Y < \ell \leq X \\ \ell \equiv u \bmod f_\chi f_0\\   \ell \equiv n/a \bmod db}}\Lambda(\ell)\\
&\approx \sum_{y \bmod f_\chi}\chi(y)\sum_{\substack{d \leq \frac{n-Xa}{Db}\\\GCD(d,anf_0) = 1\\d \equiv y \bmod f_\chi}}\frac{X-Y}{\phi(dbf_\chi f_0)}\\
&\approx (X-Y)\frac{1}{\phi(f_\chi f_0)}\sum_{\substack{d \leq \frac{n-Xa}{Db}\\\GCD(d,anf_0) = 1}}\frac{\chi(d)}{\phi(db)}.
\end{align*}

\noindent\emph{Main term of the estimation.} Anticipating that $S_3$ will disappear in the error term, we get that the main term of our estimation of the sum~\eqref{eq:mainthiswewanttoestimateXY} is
$$S_1+S_2 \approx (X-Y)\frac{1}{\phi(b)}\frac{1}{\phi(f_\chi f_0)}\left(1 + \chi\left(\frac{n - ua}{b}\right)\right)L(1,\chi)C(\chi,anf_0,b),$$
From Corollary~\ref{cor:estimatingphimsx}, for any $\varepsilon > 0$, the error introduced in the above estimation is dominated by
$$\frac{X-Y}{\phi(f_\chi f_0)}\frac{(anf_0D')^\varepsilon}{D'},$$
for $D' = \min \left(D,\frac{n-Xa}{Db}\right) \geq n^{\frac 1 2 - \frac{3\delta}{2}}$.
\\
 

\noindent\emph{Estimation of $S_3$.}
We now prove that the third and last term is absorbed in the error. We have
\begin{align*}
S_3 &= \sum_{\substack{\frac{n-Xa}{Db} < d \leq \frac{n-Ya}{Db}\\\GCD(d,an) = 1}}\chi(d)\sum_{\substack{Y < \ell \leq \frac{n-bdD}{a} \\ \ell \equiv u \bmod v\\   \ell \equiv n/a \bmod d b}}\Lambda(\ell)
= 
\sum_{y \bmod f_\chi}\chi(y)
\sum_{\substack{\frac{n-Xa}{Db} < d \leq \frac{n-Ya}{Db}\\\GCD(d, anf_0) = 1 \\ d \equiv y \bmod f_\chi}}
\sum_{\substack{Y < \ell \leq \frac{n-bdD}{a} \\ \ell \equiv u \bmod v\\ \ell \equiv n/a \bmod db}}\Lambda(\ell).
\end{align*}
To split this sum into manageable components, let \begin{align*}
T_\chi(s,t,y,x) &= 
\sum_{\substack{y \bmod f_\chi}}\chi(y)
\sum_{\substack{s < d \leq t\\\GCD(d, anf_0) = 1 \\ d \equiv y \bmod f_\chi}}
\sum_{\substack{y < \ell \leq x \\ \ell \equiv u \bmod v\\ \ell \equiv n/a \bmod db}}\Lambda(\ell).
\end{align*}
Assuming that $\frac{n-Xa}{Db}\leq  s \leq t \leq n/Db$ and $x \geq Y$, and recalling that $Y \geq n^{1-\delta}$ where $\delta$ is adjustable, we can ensure that Theorem~\ref{thm:ABL} and Corollary~\ref{cor:estimatingphimsx} apply in the following estimation: 
\begin{align*}
T_\chi(s,t,y,x) &= 
\sum_{y \bmod f_\chi}\chi(y)
\sum_{\substack{s < d \leq t\\\GCD(d, anf_0) = 1 \\ d \equiv y \bmod f_\chi }}
\frac{x-y}{\phi(vdb)} + O(\phi(f_{\chi})x^{1-\delta_0})\\
&= (x-y)
\sum_{\substack{s < d \leq t\\\GCD(d, anf_0) = 1 }}
\frac{\chi(d)}{\phi(vdb)} + O(\phi(f_{\chi})x^{1-\delta_0})\\
&= (x-y)
(\Phi_{anf_0,vb}(t)-\Phi_{anf_0,vb}(s)) + O(\phi(f_{\chi})x^{1-\delta_0})\\
&\ll (x-y)\frac{(anf_0s)^\varepsilon}{s} +\phi(f_{\chi})x^{1-\delta_0}.
\end{align*}

Also, if $\mathbf 1_\chi$ is the principal character of conductor $f_\chi$,
\begin{align*}
T_{\mathbf 1_\chi}(s,t,y,x) &= 
\sum_{\substack{s < d \leq t\\(d, anvb) = 1}}
\sum_{\substack{y < \ell \leq x \\ \ell \equiv u \bmod v\\ \ell \equiv n/a \bmod db}}\Lambda(\ell)\\
&= 
\sum_{\substack{s < d \leq t\\\GCD(d, anvb) = 1}}\frac{x-y}{\phi(vdb)}
+ O(x^{1-\delta_0})\\
&\ll  (x-y)\log(t/s) + x^{1-\delta_0}.
\end{align*}
Let $\Delta = n^{-\delta_0/2}$.
We now split the $\ell$-sum in $S_3$ into $k$ intervals of the form $(L_i, L_{i+1}]$ with $L_1 = Y$, $L_{i+1} = (1-\Delta)L_i +\Delta n/a$, and cropping the last interval so that $\left(Y,\frac{n-bdD}{a} \right] = \bigcup_{i=1}^k (L_i, L_{i+1}]$. 
Let $F(\ell) = \frac {n}{bD} - \ell \frac {a}{bD}$.
We have $F(L_{i+1})/F(L_i) = 1-\Delta$. In particular, $k$ is the smallest integer such that $(1-\Delta)^{k-1} \leq F(X)/F(Y)$. We can assume that $X \leq n/a - 1$, and deduce that $k = O(\Delta^{-1} \log(n))$.
Now, we have
$$S_3 \ll \sum_i \left(T_\chi\left(\frac{n-Xa}{bD},F(L_{i+1}),L_i,L_{i+1}\right) + T_{\mathbf 1_\chi}(F(L_{i+1}),F(L_i),L_i,L_{i+1})\right).$$
On one hand,
\begin{align*}
\sum_{i=1}^{k} T_{\mathbf 1_\chi}(F(L_{i}),F(L_{i-1}),L_{i-1},L_{i}) &\ll \sum_{i=1}^k \left((L_{i}-L_{i-1})\log(F(L_{i-1})/F(L_{i})) + L_{i}^{1-\delta_0}\right)\\
&= -\log(1-\Delta)\sum_{i=1}^k (L_{i}-L_{i-1}) + kX^{1-\delta_0}\\
&\ll \Delta(X-Y) + \Delta^{-1}\log(n)X^{1-\delta_0}\\
&\leq X^{1-\delta_0/2}(1 + \log(n))\\
&\ll n^{1-\delta}.
\end{align*}
On the other hand, writing $s = (n-Xa)/bD \geq \frac{a}{bD}n^{1-\delta} \geq \frac{n^{1/2-\delta}}{b}$,
\begin{align*}
\sum_{i=1}^k T_\chi(s,F(L_{i}),L_{i-1},L_{i}) &\ll 
\sum_{i=1}^k
\left((L_{i}-L_{i-1})\frac{(anf_0s)^\varepsilon}{ s} +\phi(f_{\chi})L_i^{1-\delta_0}\right)\\
&= (X-Y)\frac{(anf_0s)^\varepsilon}{ s}  + X^{1-\delta_0/2}\log(n)\phi(f_{\chi})\\
&\ll n^{1/2 + \delta+2\varepsilon}b(af_0)^\varepsilon  + n^{1-\delta_0/2}\log(n)\phi(f_{\chi}).
\end{align*}
This proves that choosing $\varepsilon$ and $\delta$ appropriately, $S_3$ is absorbed in the error term. It concludes the proof of Theorem~\ref{thm:estimationofsumSi}.
\qed

\section{Solving equations of the form $\det(\gamma)^2f(s,t) + bf^{\gamma}(x,y) = n$}\label{sec:solving}

\noindent 
Let $b$ and $n$ be positive integers, and $f$ a primitive, positive definite, integral, binary quadratic form whose discriminant is fundamental. Let $\gamma \in M_{2\times 2}(\Z)$ be a matrix of rank $2$.
In this section, we focus on the problem of finding integer solutions of the equation
$$\det(\gamma)^2f(s,t) + bf^{\gamma}(x,y) = n.$$
More precisely, we prove the following theorem.

\begin{theorem}[GRH]\label{thm:solvingthecomplicatedequation}
There exists a constant $c>0$ and an algorithm $\mathscr A$ such that the following holds.
Let $b$ and $n$ be positive integers, and $f$ a reduced, primitive, positive definite, integral, binary quadratic form whose discriminant is fundamental. Let $\gamma \in M_{2\times 2}(\Z)$ of rank~$2$ and content $1$.
Suppose that 
the factorisation of $\det(\gamma)$ is known, that $\det(\gamma)$, $\disc(f)$ and $b$ are pairwise coprime, and that $\GCD(\det(\gamma)b,n) = 1$.
Suppose that $\log n \geq \max(c \cdot \log b, \log (\det(\gamma))^c, \disc(f)^c)$, and either
\begin{enumerate}
\item \label{item:conditionnumberdivisorsn} 
$\log n \geq \omega(n)^{c}$, or 
\item \label{item:conditionprimedivisorsn} 
the prime divisors of $n$ are larger than $\disc(f)^c$, $(\log\log b)^c$ and $\log(\det(\gamma))^c$. 
\end{enumerate}
Then $\mathscr A(f,\gamma,b,n)$ returns an integer solution $(s,t,x,y)\in \Z^4$ of the equation
$$\det(\gamma)^2f(s,t) + bf^{\gamma}(x,y) = n,$$
provided that the equation has a solution modulo $\disc(f^
\gamma)$ for which $f(s,t)$ is invertible modulo $\disc(f)$.
The algorithm runs in expected polynomial time in $\disc(f)$, $\length(\gamma)$, $\log n$, 
and the output is random with min-entropy $\Omega(\log n)$.
\end{theorem}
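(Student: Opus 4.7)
The strategy is to seek a prime $\ell$ represented by $f$ as $\ell = f(s,t)$ such that $m := (n - \det(\gamma)^2 \ell)/b$ is represented by $f^\gamma$ as $m = f^\gamma(x,y)$. Once such an $\ell$ is found, Cornacchia's algorithm recovers $(s,t)$ and $(x,y)$ in polynomial time, modulo a factorization of $m$. The main content of the theorem is then to certify both the existence and the efficient sampling of such an $\ell$, together with the entropy lower bound.

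First, I would rewrite the equation with $a = \det(\gamma)^2$ and $u \equiv n a^{-1} \bmod b$, so that any valid $\ell$ must satisfy $\ell \equiv u \bmod b$ together with additional congruence conditions modulo $\disc(f^\gamma)$ coming from the local-solvability hypothesis. This fits exactly the framework of Section~\ref{sec:analysis}: letting $\mathscr{S}_n(g)$ count primes $\ell$ yielding a representation of $n$ as $a\ell + bg(x,y)$, Corollary~\ref{coro:coromaintheoremthatimpliesklpt} bounds $\sum_i \#\mathscr{S}_n(g_i)$ over class representatives $g_i$ in the genus of $f^\gamma$. Crucially, the local-solvability hypothesis forces the character factor $1 + \chi((n-ua)/b)$ to equal $2$, so the lower bound is truly nontrivial.

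The catch is that this bound is on the sum over the genus, not on $\#\mathscr{S}_n(f^\gamma)$ itself, which is the obstacle distinguishing this step from the heuristic argument of~\cite{KLPT14}. I would circumvent it by performing a random walk in the principal genus of the form class group before running the search, producing a modified matrix $\gamma'$ with $f^{\gamma'}$ near-uniformly distributed over classes in the genus of $f^\gamma$. Such walks are realized concretely via multiplication by ideals of small prime norm in the order of discriminant $\disc(f^\gamma)$, and converge in $O(\log h_{\mathrm{gen}})$ steps by effective forms of Chebotarev under GRH. With the randomized $\gamma'$, averaging the Corollary's bound over the genus yields $\#\mathscr{S}_n(f^{\gamma'}) \geq n / \poly(\log n, |\disc(f)|, \det(\gamma), b)$ with inverse-polynomial probability over the walk.

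Finally, the algorithm samples $\ell$ uniformly from primes in the appropriate window and arithmetic progression using Proposition~\ref{prop:samplingprimegenerallarge}, and for each candidate applies Cornacchia to $f$ at $\ell$ and to $f^{\gamma'}$ at $m$. The hardest part will be the factoring inside the second Cornacchia application, which for arbitrary $m = (n-a\ell)/b$ is not known to be tractable; I would handle this by further imposing that $m$ itself be prime. This is precisely where conditions~(\ref{item:conditionnumberdivisorsn}) and~(\ref{item:conditionprimedivisorsn}) on the prime structure of $n$ enter: they ensure that the refined ``double prime'' count (with both $\ell$ and $m$ prime) still beats the error terms in Theorem~\ref{thm:estimationofsumSi} by a polynomial margin, essentially by iterating the same analytic input against the inner prime condition on $m$. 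The min-entropy bound then follows from the uniform sampling of $\ell$ over a set of cardinality $\Omega(n/\poly\log n)$, and a final bookkeeping step translates the solution for $\gamma'$ back to one for the original $\gamma$ by inverting the sequence of small ideal multiplications used in the random walk.
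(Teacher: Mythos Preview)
Your overall strategy---reduce to a prime value of $f(s,t)$, randomise the class of $f^\gamma$ within its genus, and invoke Corollary~\ref{coro:coromaintheoremthatimpliesklpt}---matches the paper's. But there is a genuine gap in the order of operations, and it is precisely the point where the paper's argument diverges from yours.

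You propose to sample a prime $\ell$ represented by $f$ first, and then to solve $f^{\gamma'}(x,y) = m$ with $m = (n - a\ell)/b$ via Cornacchia. As you correctly note, Cornacchia on $f^{\gamma'}$ requires the factorisation of $m$, and your fix is to demand that $m$ be prime as well. This ``double prime'' requirement---both $\ell$ and $(n-a\ell)/b$ prime---is \emph{not} what Theorem~\ref{thm:estimationofsumSi} delivers. That theorem controls sums with a single von~Mangoldt weight; asking for two independent prime conditions is a Goldbach-type statement that the analytic input in Section~\ref{sec:analysis} does not provide, and there is no ``iteration'' of the same estimate that yields it. Conditions~(\ref{item:conditionnumberdivisorsn}) and~(\ref{item:conditionprimedivisorsn}) in the theorem are not there to rescue a double-prime count; they control the size of the auxiliary integers $B$ and $B_0$ used in the genus randomisation (Lemmata~\ref{lem:BRepresentedByAnyClass} and~\ref{lem:BRepresentedByAnyClassSmallDisc}), so that the hypotheses of Corollary~\ref{coro:coromaintheoremthatimpliesklpt} remain satisfied.

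The paper avoids the factorisation of $m$ entirely by reversing your sampling order. Rather than sampling $\ell$ and then solving for $(x,y)$, it samples a uniformly random \emph{integer} triple $(z,x,y)$ with $az + bB^2h(x,y) = n$ and $z>0$ directly (Proposition~\ref{prop:samplingnbfxya} and Lemma~\ref{lem:wecansampleinX}), where $h$ is a random class in the genus. The pair $(x,y)$ is thus produced by the sampler, not by Cornacchia, so no factorisation of $m$ is ever needed. One then simply tests whether $z$ is prime in the right residue class; Corollary~\ref{coro:coromaintheoremthatimpliesklpt} compared against the total count from Proposition~\ref{prop:samplingnbfxya} shows this happens with probability $\gg 1/\mathrm{poly}(\log n)$. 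The only Cornacchia call is on the \emph{small}-discriminant form $f$ at the prime $z$, which is trivial. A second subtlety you gloss over is that primality of $z$ in the right progression only guarantees representability by the \emph{genus} of $f$, not by $f$ itself; the paper handles this with a separate pre-multiplication trick (Lemma~\ref{lemma:randomizingontheleft}, the factor $B_0$), which your outline does not address.
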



Cornacchia's algorithm allows to solve equations of the form
$$f(s,t) = z,$$
in time polynomial in $\disc(f)$ and $\log z$ when the factorisation of $z$ is known and a solution exists.
We are therefore led to study the solutions $(z,x,y)$ of the equation
\begin{equation}\label{eq:detgammazbfgammaxy}
\det(\gamma)^2 z + b f^\gamma(x,y) = n,
\end{equation}
where the factorisation of $z$ is known and $z>0$.
Factoring is hard, but primality testing is easy, so we will simply look for solutions where $z$ is prime.
Having $z$ prime has another advantage: if $\chi$ is the Kronecker symbol of modulus $\disc(f)$, 
the condition $\chi(z) = 1$ ensures that there is a solution $f'(s,t) = z$ for some $f'$ of same discriminant as $f$. 
Replacing this condition by $z \equiv u \bmod \disc(f)$ for any $u$ represented by $f$ ensures that $z$ is represented by some $f'$ in the same genus as $f$. Ensuring that $z$ is represented by $f$ itself will require additional tricks.

\subsection{Solutions of $a z + b g(x,y) = n$}
First, let us lift the delicate primality condition on $z$.
The following proposition allows to sample random solutions of Equation~\eqref{eq:detgammazbfgammaxy} if $z$ is only required to be a positive integer.

\begin{proposition}\label{prop:samplingnbfxya}
Let $g$ be a primitive, positive definite, integral, binary quadratic form.
Let $a,b,n$ be positive integers, and suppose that $a$ divides $\disc(g)$ and $\GCD(a,2bn) = 1$.
Let $X$ be the set of integral solutions $(z,x,y)$ of the equation
$a z + b g(x,y) = n$, with $z > 0$.
If there exists a solution modulo $a$, then $X$ is a disjoint union of $2^{\omega(a)}$ sets $X_i$, with
$$\#X_i = \frac{ \pi n}{ba\Vol(g)} + O\left(\left(\frac n b\right)^{1/2} + a\Vol(g)\right),$$
and knowing the factorisation of $a$ allows to 
sample uniformly from any $X_i$ in polynomial time.
\end{proposition}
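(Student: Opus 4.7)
Since $\GCD(a,b) = 1$, the map $(z,x,y) \mapsto (x,y)$ identifies $X$ (up to finite boundary corrections from $z > 0$) with the set of $(x,y) \in \Z^2$ satisfying $g(x,y) \leq n/b$ and $g(x,y) \equiv r \pmod a$, where $r \equiv n/b \pmod a$. So the task reduces to describing the set $\mathscr R \subset (\Z/a\Z)^2$ of residues where $g \equiv r$, and then counting and sampling lattice points with $g$-value at most $n/b$ in each component.

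Write $g(x,y) = \alpha x^2 + \beta x y + \gamma y^2$. Because $a \mid \disc(g) = \beta^2 - 4\alpha\gamma$ and $a$ is odd, the identity $4\alpha g(x,y) \equiv (2\alpha x + \beta y)^2 \pmod a$ holds. Primitivity of $g$ ensures that for each prime $p \mid a$ at least one of $\alpha, \gamma$ is a unit modulo $p$: the remaining possibility $p \mid \alpha$, $p \mid \gamma$, $p \nmid \beta$ is ruled out by $p \mid \disc(g)$. After a prime-local variable swap I may assume $\GCD(\alpha, p) = 1$. The congruence $g(x,y) \equiv r \pmod{p^{v_p(a)}}$ then becomes $(2\alpha x + \beta y)^2 \equiv 4\alpha r \pmod{p^{v_p(a)}}$, and since $p$ is odd and $\GCD(r, p) = 1$, Hensel's lemma (combined with the hypothesised solution modulo $a$) produces exactly two roots $\pm c_p$, computable in polynomial time using the given factorisation of $a$. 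Combining via CRT, each sign pattern $\epsilon \in \{\pm\}^{\omega(a)}$ yields a single linear congruence $L_\epsilon(x,y) \equiv c_\epsilon \pmod a$ whose solution set $X_i$ is a coset of a sublattice $\Lambda_\epsilon \subset \Z^2$ of index $a$; these $2^{\omega(a)}$ cosets are pairwise disjoint and together form $\mathscr R$.

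Fix a basis of $\Lambda_\epsilon$. In this basis, $g$ restricts to a binary quadratic form $g'$ of volume $a\Vol(g)$ (the Gram determinant scales by $a^2$). Counting integer points in the coset $X_i$ with $g \leq n/b$ is equivalent to counting integer vectors $v \in \Z^2$ with $g'(v - v_0) \leq n/b$ for an appropriate shift $v_0 \in \Q^2$, and Lemma~\ref{lem:balldim2}, which accommodates exactly such a translation, yields
$$\#X_i = \frac{\pi n}{b\,a\,\Vol(g)} + O\!\left((n/b)^{1/2} + a\Vol(g)\right),$$
as claimed. Uniform sampling from $X_i$ reduces in the same way to uniform sampling from a translated $g'$-ball in $\Z^2$; the CVP-based procedure of Lemma~\ref{lem:samplingbinary} adapts transparently to translates, because step~2 of its proof (closest vector computation) applies equally well in any coset of $\Z^2$, with the same bounded-rejection analysis.

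The main obstacle in the argument is the prime-by-prime analysis of the quadratic congruence modulo $a$ and the clean assembly of the local sign choices into the $2^{\omega(a)}$ global cosets via CRT; once this structural decomposition is in place, both the count and the sampler are direct applications of the lattice-point lemmas already established.
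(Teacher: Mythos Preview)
Your proposal is correct and follows essentially the same route as the paper's proof: reduce the congruence $bg(x,y)\equiv n\pmod a$ to a square equation using $a\mid\disc(g)$, obtain $2^{\omega(a)}$ square roots (hence $2^{\omega(a)}$ cosets of an index-$a$ sublattice) via CRT, then count and sample in each coset with Lemmata~\ref{lem:balldim2} and~\ref{lem:samplingbinary}. Your prime-by-prime treatment of the completion of the square is in fact slightly more explicit than the paper's, which asserts the global splitting $g\equiv\varepsilon(\alpha x+\beta y)^2\bmod a$ directly.
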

\begin{proof}
First consider the equation modulo $a$. Since $a$ divides the discriminant of $g(x,y)$, the latter polynomial splits modulo $a$ and the equation becomes
$$\varepsilon\cdot(\alpha x + \beta y)^2 = n/b \bmod a,$$
where $\varepsilon$, and a least one of $\alpha$ and $\beta$, are invertible. 
Now, the element $n/{b\varepsilon} \bmod a$ is invertible, so it either has no square root (in which case $X$ is empty), or it has $2^{\omega(a)}$ distinct square roots.
Suppose it has square roots. There is a sublattices $\Lambda$ of index $a$ in $\Z^2$ such that the space of admissible pairs $(x,y)$ is the disjoint union $\bigsqcup_{i = 1}^\delta(\Lambda + v_i)$, where the $v_i$-vectors are representative solutions for the $2^{\omega(a)}$ roots modulo $a$.
Accounting for the condition $z>0$, it remains to count for each translated lattice $\Lambda + v_i$ the number of points $(x,y) \in \Lambda +  v_i$ such that
$$g(x,y) < n/b.$$
From Lemma~\ref{lem:balldim2}, it is equal to
$$\frac{\pi n}{b\Vol(\Lambda)\Vol(g)} + O\left(\left(\frac n b\right)^{1/2} + \Vol(\Lambda)\Vol(g)\right) = \frac{\pi n}{ba\Vol(g)} + O\left(\left(\frac n b\right)^{1/2} + a\Vol(g)\right).$$
Let $X_i$ be the solutions stemming from $\Lambda +  v_i$. 
Given the factorisation of $a$, one can compute all the square roots of $n/{b\varepsilon} \bmod a$. Therefore,
to sample uniformly in $X_i$, apply Lemma~\ref{lem:samplingbinary} to sample uniformly a point in the intersection of $\Lambda + v_i$ and the ellipsoid $g(x,y) < n/b$.
\end{proof}

%


\subsection{Randomisation in the genus} Proposition~\ref{prop:samplingnbfxya} tells us that integer solutions of Equation~\eqref{eq:detgammazbfgammaxy} can be sampled uniformly (up to a small error). We would then be done if a large proportion of these have a $z$-value which is a prime represented by $f$. Unfortunately, it is hard to control the primality and representability of these solutions when the form $f^\gamma$ is fixed. Theorem~\ref{thm:estimationofsumSi} only gives information about the family of equations where $f^\gamma$ is replaced by any form in its genus. Luckily, we can randomise within the genus thanks to the following two lemmata. Their proofs use the classical correspondence between binary quadratic forms and ideals in quadratic orders; for an account of this theory, we refer the reader to~\cite[Section~7]{Cox11}.
%

Lemma~\ref{lem:BRepresentedByAnyClass} below is useful to deal with forms $f^\gamma$ of large discriminant.

\begin{lemma}[GRH]\label{lem:BRepresentedByAnyClass}
For any discriminant $d$ and positive integer $m$,
there exists an integer $B$ coprime to $md$ such that any primitive binary quadratic form of discriminant $d$ represents a divisor of $B$, and $\log B = O_\varepsilon\left((\log |d|\cdot \left((\log |d|)^{2+\varepsilon} + \omega (m)^{1+\varepsilon}\right)\right)$. There is an algorithm which samples a form uniformly distributed in the class group, together with a representation by this class of a divisor of $B$, in time polynomial in $\log|d|$ and $\log m$.
\end{lemma}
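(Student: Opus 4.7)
The plan is to translate the lemma into the language of ideal classes via the standard correspondence between primitive binary quadratic forms of discriminant $d$ and proper ideal classes of the order $\mathcal{O}_d \subset \mathbb{Q}(\sqrt{d})$ (see~\cite[Section~7]{Cox11}): the integers coprime to the conductor represented by a form are precisely the norms of invertible ideals in the associated class. It therefore suffices to produce an integer $B$ coprime to $md$ such that every proper ideal class of $\mathcal{O}_d$ contains an invertible ideal of norm dividing $B$, together with a polynomial-time algorithm sampling a near-uniform class of $\mathrm{Cl}(\mathcal{O}_d)$ with such a representative explicit.

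The first step is to exhibit a generating set of $\mathrm{Cl}(\mathcal{O}_d)$ made of small primes coprime to $md$. Let $P$ be the set of rational primes $p \leq X$ coprime to $md$ with $(d/p) = 1$, and for each $p \in P$ fix one prime $\mathfrak{p}$ above it. If the classes $[\mathfrak{p}]$ generated only some proper subgroup $H \leq \mathrm{Cl}(\mathcal{O}_d)$ of index $\geq 2$, then under GRH, effective Chebotarev applied to the ring class field (a routine extension of Bach's $12(\log|d|)^2$ bound to the non-maximal order setting, using that the conductor--discriminant formula contributes only a $\log|d|$ factor to the error term) shows that for $X \geq C_\varepsilon ((\log|d|)^{2+\varepsilon} + \omega(m)^{1+\varepsilon})$, the number of split primes of norm $\leq X$ whose class lies outside $H$ exceeds the $\leq \omega(md)$ bad primes we are forced to avoid, a contradiction. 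Thus $P$ generates, with $|P| \ll X/\log X$.

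Next I upgrade generation to almost-uniform sampling via the Jao--Miller--Venkatesan expansion theorem: under GRH, the Cayley graph of $\mathrm{Cl}(\mathcal{O}_d)$ on the symmetric generating set $\{[\mathfrak{p}],[\overline{\mathfrak p}]\}_{p \in P}$ has spectral gap bounded below by an absolute constant (the proof uses only that $P$ contains a positive proportion of split primes up to $X$). Combined with $h(\mathcal{O}_d) = O(|d|^{1/2+\varepsilon})$, a lazy random walk of length $L = C\log|d|$ reaches total-variation distance $|d|^{-k}$ from uniform for any fixed $k$. Performing such a walk yields an ideal $I = \mathfrak{p}_{i_1}\cdots\mathfrak{p}_{i_L}$ whose norm divides $\prod_{p \in P} p^{L}$; accordingly set
\[
B \;=\; \prod_{p \in P} p^{L}, \qquad \log B \;=\; L\sum_{p \in P}\log p \;\ll\; \log|d|\cdot X \;=\; O_\varepsilon\!\Bigl(\log|d|\cdot\bigl((\log|d|)^{2+\varepsilon}+\omega(m)^{1+\varepsilon}\bigr)\Bigr),
\]
using $\sum_{p \leq X}\log p \ll X$ (Chebyshev). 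The algorithm samples the $L$-step walk, reduces the resulting ideal product to a Gauss-reduced form representing its class, and outputs the pair; every operation is polynomial in $\log|d|$ and $\log m$.

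The main obstacle is the uniformity of the GRH-conditional estimates in $d$ and in the conductor of $\mathcal{O}_d$: the ring class field has discriminant only polynomially bounded in $|d|$, and both the Chebotarev step and the Jao--Miller--Venkatesan spectral estimate need to be checked to keep the error terms within budget once we pass from the maximal order to $\mathcal{O}_d$. A subsidiary issue is strict versus near-uniformity of the output distribution; if strict uniformity is required, one post-processes by rejection sampling using the (efficiently computable) walk distribution, at constant expected overhead.
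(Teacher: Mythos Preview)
Your proposal is correct and follows essentially the same approach as the paper: translate to ideal classes via the standard correspondence, use the Jao--Miller--Venkatesan expander result (with the generating set restricted to split primes $\leq X$ coprime to $md$, which costs an extra $\omega(m)$ in the Chebotarev error) to get both a generating set and rapid mixing, take $B$ as the product of these primes raised to the walk length $L = O(\log|d|)$, and sample via a random walk. The paper's proof is slightly more compressed---it directly modifies~\cite[Equation~(2.4)]{jmv:asiacrypt} rather than separating the generation and expansion steps---but the ingredients and the resulting bound $\log B \ll L\sum_{p\in P}\log p \ll (\log|d|)\cdot X$ are identical.
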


\begin{proof}
We utilise the correspondence between classes of binary quadratic forms and ideal classes in quadratic number fields, and the fact that a form represents $n>0$ if and only if the corresponding ideal class contains an ideal of norm $n$.
The key is the fact that there is a small bound $C$ such that the ideals of prime norm at most $C$ constitute a generating set of the class group so that the Cayley graph is highly connected: any two vertices are connected by a path of length at most $D = O(\log h(d)) = O(\log |d|)$.
From~\cite[Theorem~1.1]{jmv:asiacrypt} and~\cite[Corollary~1.3]{jmv:asiacrypt}, one can choose $C = O_\varepsilon((\log |d|)^{2+\varepsilon})$ for any $\varepsilon>0$. However, to construct our $B$, we wish to consider only prime ideals coprime to $m$. The same proof as~\cite{jmv:asiacrypt} implies that we can choose $C = O_\varepsilon((\log |d|)^{2+\varepsilon} + \omega(m)^{1+\varepsilon})$: replace the estimate (in~\cite[Equation~(2.4)]{jmv:asiacrypt}, with $n=2$)
$$\sum_{\substack{N(\mathfrak p) \leq x}}\left(\chi(\mathfrak p) + \chi(\mathfrak p)^{-1}\right) = 2r\frac{x}{\log x}+ O\left(x^{1/2}\log(xd)\right)$$
with its simple corollary
$$\sum_{\substack{N(\mathfrak p) \leq x\\(\mathfrak p,m)=1}}\left(\chi(\mathfrak p) + \chi(\mathfrak p)^{-1}\right) = 2r\frac{x}{\log x} + O\left(x^{1/2}\log(xd) + \omega(m)\right).$$
We deduce that if $p_1,\dots,p_{k}$ are all the primes at most $C$ not dividing $m$, any ideal is equivalent to an ideal of norm $\prod_{i}p_i^{e_i}$ where $\sum_ie_i \leq D$. The latter is a divisor of $B = \prod_i p_i^D,$ and we have
$$\log B = D \sum_i \log p_i \leq D\pi(C)\log(C) = O_\varepsilon\left((\log |d|\cdot \left((\log |d|)^{2+\varepsilon} + \omega (m)^{1+\varepsilon}\right)\right),$$
where $\pi(C)\log(C) = O(C)$ is the prime number theorem.

To sample an ideal with norm dividing $B$ and uniformly distributed in the class group, compute a random walk of length $D$ in the (expander) Cayley graph, as in~\cite{jmv:asiacrypt}.
\end{proof}

The next lemma is similar, but mostly useful when the discriminant is small.

\begin{lemma}[GRH]\label{lem:BRepresentedByAnyClassSmallDisc}
For any discriminant $d$ and positive integer $m$,
there exists an integer $B$ coprime to $md$ such that any primitive binary quadratic form of discriminant $d$ represents a divisor of $B$, and $\log B = O_\varepsilon\left(|d|^{1/2 +\varepsilon}\log( \omega (m)+2)\right)$. There is an algorithm which samples a form uniformly distributed in the class group in time polynomial in $|d|$ and $\log m$. Given any class, one can compute a divisor of $B$ together with a representation by this class in time polynomial in $|d|$ and $\log m$.
\end{lemma}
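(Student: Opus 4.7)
The plan is to proceed by brute enumeration of the class group, which is affordable because $|d|$ is polynomial in the input size. Let $h = h(d) = O(|d|^{1/2}\log |d|)$ be the class number. Using the classical theory of reduced binary quadratic forms of discriminant $d$ (those with $|b| \le a \le c$ and $b^2 - 4ac = d$), we enumerate exactly one representative $f_{C_1}, \dots, f_{C_h}$ per class in time polynomial in $|d|$. Uniform sampling in the class group is then trivial: pick a uniform index in $\{1,\dots,h\}$.

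To construct $B$, we choose a bound $V$ and set
\[
B = \prod_{\substack{p^k \le V \\ p \,\nmid\, md}} p,
\]
which by Chebyshev's bound satisfies $\log B = O(V)$. By design, every positive integer at most $V$ coprime to $md$ divides $B$. It remains to choose $V$ so that every class $C_i$ represents at least one integer $n_i$ with $n_i \le V$ and $\GCD(n_i, md) = 1$, and to exhibit an efficient algorithm producing such an $(x,y)$ with $f_{C_i}(x,y) = n_i$.

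For the counting step, Lemma~\ref{lem:balldim2} shows that the number of pairs $(x,y) \in \Z^2$ with $f_{C_i}(x,y) \le V$ is $\pi V/\sqrt{|d|} + O(V^{1/2} + \sqrt{|d|})$. A standard inclusion-exclusion sieve over primes $p \mid md$ shows that the count of such pairs with $\GCD(f_{C_i}(x,y), md) = 1$ is at least
\[
\frac{\pi V}{\sqrt{|d|}} \prod_{p \mid md}\left(1 - \frac{c_p}{p}\right) - O\!\left(V^{1/2} + \sqrt{|d|}\right),
\]
where $c_p \in \{0,1,2\}$ is the local density of $p$-divisibility for $f_{C_i}$ (at most $2$ for split primes, $1$ for ramified ones, and essentially $0$ for inert ones, with minor care at $p=2$). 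Since the primes dividing $d$ contribute at most a factor $O((\log\log |d|)^{O(1)})$ and Mertens' theorem gives $\prod_{p\mid m}(1-c_p/p)^{-1} = O((\log(\omega(m)+2))^{O(1)})$, the choice $V \asymp \sqrt{|d|}\,(\log(\omega(m)+2))^{O(1)}$ ensures the count is positive, and in fact $\Omega(V/\sqrt{|d|}\cdot (\log(\omega(m)+2))^{-O(1)})$. Hence $\log B = O(V) = O_\varepsilon(|d|^{1/2+\varepsilon}\log(\omega(m)+2))$, the $|d|^\varepsilon$ slack absorbing the polylog in $\omega(m)$.

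For the algorithmic part, given a class $C_i$, sample $(x,y)$ uniformly from $\{(x,y)\in\Z^2 : f_{C_i}(x,y) \le V\}$ using Lemma~\ref{lem:samplingbinary}, test whether $\GCD(f_{C_i}(x,y), md) = 1$, and repeat until success. The sieve estimate bounds the expected number of trials by a polynomial in $|d|$ and $\log m$. The main obstacle in this plan is the sieve bookkeeping: the local density at small primes (especially $p=2$) must be handled carefully so that the product $\prod_{p\mid md}(1 - c_p/p)$ does not vanish or shrink faster than $(\log(\omega(m)+2))^{-O(1)}$; once this is under control, all other pieces (enumeration of reduced forms, sampling in an ellipsoid, Chebyshev's bound on the primorial-like product) are standard and polynomial-time in $|d|$ and $\log m$.
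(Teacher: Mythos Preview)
Your approach differs substantially from the paper's, and the core sieve step does not work as stated. The paper argues on the ideal side: under GRH, the class group is generated by prime ideals of norm at most $C = O_\varepsilon((\log|d|)^{2+\varepsilon} + \omega(m)^{1+\varepsilon})$ coprime to $md$; one computes the full class group in time polynomial in $|d|$, extracts a minimal generating subset $P$ with $\#P\le\log h$, writes each class as a product of elements of $P$ with exponents at most $h$, and sets $B = \prod_{\mathfrak p \in P} N(\mathfrak p)^h$, yielding $\log B \le h\cdot\#P\cdot\log C = O_\varepsilon(|d|^{1/2+\varepsilon}\log(\omega(m)+2))$ directly, with no sieve.

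Your sieve has a genuine gap, and not the one you flagged. Inclusion--exclusion over squarefree $q\mid\mathrm{rad}(md)$ produces $2^{\omega(md)}$ terms, each counting lattice points in cosets of $q\Z^2$ with individual errors of order $V^{1/2}+q^2\sqrt{|d|}$ from Lemma~\ref{lem:balldim2}; these do not collapse to the single $O(V^{1/2}+\sqrt{|d|})$ you wrote. Worse, your target $V$ is simply too small for the conclusion to hold at all: take $m$ to be the product of all primes up to $T$ for $T$ just above $\sqrt{|d|}(\log|d|)^{O(1)}$. Then $\omega(m)\asymp T/\log T$, so $\log(\omega(m)+2)\asymp\log|d|$ and your $V\asymp\sqrt{|d|}(\log(\omega(m)+2))^{O(1)}\le T$. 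But now every integer $2\le n\le V$ has a prime factor $\le V\le T$, hence dividing $m$, so the only $n\le V$ coprime to $m$ is $n=1$, represented by the principal class alone. For any $d$ with $h(d)>1$ your construction therefore fails for every non-principal class, regardless of how carefully the local densities at $p=2$ are bookkept. A salvageable variant is to invoke GRH via Theorem~\ref{thm:primebinaryform}: each class represents $\gg\rho/(h(d)\log\rho)$ primes up to $\rho$, so choosing $\rho$ with this count exceeding $\omega(md)$ yields a represented prime coprime to $md$, and taking $B$ as the product of one such prime per class gives $\log B\le h(d)\log\rho$ of the right size---but that is a different argument from the one you proposed.
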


\begin{proof}
We proceed as above, but instead of doing random walks, we use that class group computations can be done in time polynomial in the discriminant (see~\cite[Chapter~5]{Cohen13}).
As already seen in the proof of Lemma~\ref{lem:BRepresentedByAnyClass}, the class group is generated by the set $P_0$ of ideal of prime norm at most $C = O_\varepsilon((\log |d|)^{2+\varepsilon} + \omega(m)^{1+\varepsilon})$ not dividing $md$. In time polynomial in $|d|$, one can compute the class group, and find a minimal subset $P\subseteq P_0$ generating the class group. We have $\#P \leq \log(h)$, where $h = O(|d|^{1/2}\log|d|)$ is the class number.
For each class, one can compute a representative that is a product of ideals in $P$, with exponents at most the class number $h$. These representatives divide $B = \prod_{\mathfrak p \in P} N(\mathfrak p)^h$, and
$$\log B = h\sum_{\mathfrak p \in P} \log N(\mathfrak p) \leq h \log(C) \# P =O_\varepsilon(|d|^{1/2+\varepsilon}\log(\omega(m)+2)).$$
The added $2$ avoids the degeneracy at $m = 1$.
In polynomial time in $|d|$, one can sample a uniformly random ideal class. Given any class, one can return the corresponding (previously computed) representative that divides $B$.
\end{proof}

 Let $a,b,u,v$ and $n$ be positive integers, and $f$ and $g$ two primitive, positive definite, integral, binary quadratic forms.
We are looking for a solution $(z,x,y)$ to the equation
\begin{equation}\label{eq:solveinztabn}
az + bg(x, y) = n,
\end{equation}
where $z>0$ is represented by $f$.
A condition of the form $\ell \equiv u \bmod \disc(f)$ can ensure that $\ell$ is represented by the genus of $f$, but this is not enough for $\ell$ to be represented by $f$ itself. The following trick deals with this difficulty.
\begin{lemma}\label{lemma:randomizingontheleft}
There is an integer $B_0$ coprime to $2nb\disc(g)$ such that the following holds. For any $\rho \in M_{2\times 2}(\Z)$ of determinant $B_0$ and content $1$, given an integral solution $(\ell,x,y)$ of
\begin{equation*}
a\det(\rho)^2\ell + bg^{\rho}(x_0, y_0) = n,
\end{equation*}
with $\ell$ a prime represented by the genus of $f$, one can compute an integral solution $(s,t,x,y)$ of
\begin{equation*}
af(s,t) + bg(x, y) = n
\end{equation*}
in expected polynomial time in $\disc(f)$ and the size of the input.
\end{lemma}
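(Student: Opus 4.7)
The plan is to construct the integer $B_0$ first and once, then process each input by a change of variables on the right-hand side, followed by an ideal-theoretic argument producing a representation $f(s,t)=B_0^2\ell$. The principal tool is the classical correspondence between primitive classes of binary quadratic forms of fundamental discriminant $\disc(f)$ and invertible ideal classes of the quadratic order of that discriminant.

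First I apply Lemma~\ref{lem:BRepresentedByAnyClassSmallDisc} with $d=\disc(f)$ and $m=2nb\disc(g)$ to obtain an integer $B_0$, together with its prime factorisation, coprime to $2nb\disc(g)\disc(f)$, with the property that every ideal class of the quadratic order of discriminant $\disc(f)$ contains an invertible ideal whose norm divides $B_0$. This $B_0$ depends only on $f,b,n$ and $\disc(g)$, not on $\ell$ or on $\rho$. Given the input $\rho$ and $(\ell,x_0,y_0)$, I set $(x,y)^t := \rho\cdot(x_0,y_0)^t$, so that $g(x,y)=g^\rho(x_0,y_0)$ by the definition of $g^\rho$, and the first equation becomes $aB_0^2\ell + bg(x,y)=n$. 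It therefore suffices to produce integers $(s,t)$ with $f(s,t)=B_0^2\ell$.

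The main step is to show that $f$ itself, and not merely its genus, represents $B_0^2\ell$. Let $[\mathfrak{a}_f]$ denote the ideal class corresponding to $f$. Since $\ell$ is represented by the genus of $f$, some form $f'$ in that genus represents $\ell$, giving an invertible ideal $\mathfrak{l}$ of norm $\ell$ in the class $[\mathfrak{a}_{f'}]$. Because $f'$ lies in the genus of $f$, we have $[\mathfrak{a}_f]=[\mathfrak{a}_{f'}][\mathfrak{c}]^2$ for some class $[\mathfrak{c}]$. Applying the effective part of Lemma~\ref{lem:BRepresentedByAnyClassSmallDisc} to the class $[\mathfrak{c}]$, I obtain an ideal $\mathfrak{k}\in[\mathfrak{c}]$ whose norm $d$ divides $B_0$. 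The ideal $\mathfrak{k}^2\cdot\mathfrak{l}\cdot(B_0/d)\cO$ then has norm $d^2\cdot\ell\cdot(B_0/d)^2 = B_0^2\ell$ and class $[\mathfrak{c}]^2[\mathfrak{a}_{f'}]=[\mathfrak{a}_f]$, so $f$ represents $B_0^2\ell$.

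Finally, since the factorisation of $B_0$ is known and $\ell$ is prime, the factorisation of $B_0^2\ell$ is known, so Cornacchia's algorithm produces $(s,t)\in\Z^2$ with $f(s,t)=B_0^2\ell$ in expected polynomial time in $\disc(f)$ and $\log(B_0^2\ell)$. The quadruple $(s,t,x,y)$ then satisfies $af(s,t)+bg(x,y)=n$. The main obstacle is the ideal-theoretic identification pinning down $B_0^2\ell$, rather than some other multiple of $\ell$, as represented by the specific class of $f$; this relies on combining the genus condition on $\ell$ with the fact that $B_0$ meets every ideal class via Lemma~\ref{lem:BRepresentedByAnyClassSmallDisc}.
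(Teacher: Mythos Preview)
Your proof is correct and follows essentially the same approach as the paper. Both choose $B_0$ via Lemma~\ref{lem:BRepresentedByAnyClassSmallDisc} with $d=\disc(f)$ and $m=2nb\disc(g)$, pass from $g^\rho$ to $g$ via $(x,y)=\rho(x_0,y_0)$, and then argue that $f$ represents $B_0^2\ell$ by writing $[f]=[f'][\mathfrak c]^2$ with $f'$ representing $\ell$ and picking a representative of $[\mathfrak c]$ of norm $d\mid B_0$. The only cosmetic difference is that the paper phrases this last step in terms of Gauss composition of forms (explicitly producing $(s_2,t_2)$ with $f(s_2,t_2)=d^2\ell$, then scaling by $B_0/d$), whereas you phrase it via the ideal correspondence and then invoke Cornacchia on $B_0^2\ell$; since the paper takes Cornacchia to work for any $f$ given the factorisation of the target, both routes are valid and equivalent.
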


\begin{proof}
Let $B_0$ be the integer from Lemma~\ref{lem:BRepresentedByAnyClassSmallDisc}, for $d = \disc(f)$ and $m = 2 n b \disc(g)$. 
 Let $\rho \in M_{2\times 2}(\Z)$ of determinant $B_0$ and content $1$, and suppose we have a solution $(\ell,x,y)$ of
\begin{equation*}
aB_0^2\ell + bg^{\rho}(x', y') = n,
\end{equation*}
with $\ell$ a prime represented by the genus of $f$.
Then, Cornacchia's algorithm allows to find $h$ in the genus of $f$ and integers $(s_0,t_0)$ such that $\ell = h(s_0,t_0)$. Let $k$ such that $[k]^2 = [h]^{-1}[f]$ and $(s_1,t_1)$ such that $k(s_1,t_1) = d \mid B_0$. Then, we can compute\footnote{Using the Gauss composition law, find $S,T$ such that $(k^2h)(S,T) = k(s_0,t_0)^2h(s_1, t_1)$. 
Then, find $\gamma,\gamma' \in \SL_2(\Z)$ such that $(k^2h)^\gamma$ and $g^{\gamma'}$ are reduced (see~\cite[Algorithm~5.4.2]{Cohen13}). 
Since they are in the same class and reduced, we actually have $(k^2h)^\gamma = g^{\gamma'}$.
Finally, let $(s_2,t_2) = \gamma'\gamma^{-1}(S,T)$.}
 $(s_2,t_2)$ such that
$$B_0^2 \ell = (B_0/d)^2 k(s_1,t_1)^2 h(s_0,t_0) = (B_0/d)^2 f(s_2,t_2) = f(s_2 B_0/d,t_2 B_0/d).$$
With $(s,t) = (s_2 B_0/d,t_2 B_0/d)$ and $(x,y) = \rho(x',y')$, we get $af(s,t) + bg(x, y) = n$.
\end{proof}

Therefore, it is sufficient to study solutions of Equation~\eqref{eq:solveinztabn} where $\ell$ is a prime represented by the genus of $f$, up to replacing $a$ with $aB_0^2$ and $g$ with $g^\rho$.
\\

For the rest of this section, consider all notation and conditions from Theorem~\ref{thm:solvingthecomplicatedequation}, and let $a = \det(\rho\gamma)^2$ and $g = f^{\rho\gamma}$, with $\rho$ as in Lemma~\ref{lemma:randomizingontheleft}.
By working carefully at each prime factor of $B_0$, one can craft $\rho$ in a way that ensures the local solvability of the equation $a\ell + bg(x_0, y_0) = n$.
Observe that in general, the $B_0$ constructed in Lemma~\ref{lemma:randomizingontheleft} satisfies
$$\log(B_0) = O_\varepsilon\left(|\disc(f)|^{1/2 +\varepsilon}\log (2+\log (nb\disc(g)))\right),$$
but if Condition~\eqref{item:conditionprimedivisorsn} holds, we can obtain $$\log(B_0) = O_\varepsilon\left(|\disc(f)|^{1/2 +\varepsilon}\log (2 + \log (b\disc(g)))\right)$$ by choosing $m = 2 b \disc(g)$ in the application of Lemma~\ref{lem:BRepresentedByAnyClassSmallDisc}; the condition that $(B_0,n) = 1$ is then enforced by the fact that the prime divisors of $n$ are larger than those of $B_0$.

Now, let $B$ be the integer from Lemma~\ref{lem:BRepresentedByAnyClass} for $d = \disc(g)$, and $m = n$ if Condition~\eqref{item:conditionnumberdivisorsn} holds, or $m = 1$ if Condition~\eqref{item:conditionprimedivisorsn} holds.
In either case, Lemma~\ref{lem:BRepresentedByAnyClass} ensures that $\log(B) = O((\log n)^{\frac{9+3\varepsilon}{2c}})$, either by bounding $\omega(n)$ with Condition~\eqref{item:conditionnumberdivisorsn}, or with $\omega(1) = 0$ in the other case.
Also, even in the case $m = 1$, we have $(B,n) = 1$, as Condition~\eqref{item:conditionprimedivisorsn} ensures that the prime factors of $n$ are all larger than those of $B$. Consider the sets
\begin{align*}
\mathscr X &= \{(z,x_1,y_1,h,x_0,y_0,k) \mid az + B^2bh(x_1,y_1) = n, z > 0, [k^{2}h] = [g]\text{, and $k(x_0,y_0)$ divides $B$}\},\\
X &= \{(z,x,y,[h]) \mid az + B^2bh(x,y) = n, z > 0,\text{ and $h$ is in the genus of $g$}\},\\
\mathscr S &= \{(\ell,x,y,[h]) \in X \mid \ell \text{ is prime and }\ell \equiv u \bmod \disc(f)\},
\end{align*}
where $[h]$ denotes the class of $h$ in the class group.
There is a natural surjection $\pi : \mathscr X \rightarrow X$.
\begin{lemma}\label{lem:samplinginSissufficient}
Given a tuple $T \in \mathscr X$ such that $\pi(T) \in \mathscr S$, one can compute a solution $(\ell,x,y)$ of Equation~\eqref{eq:solveinztabn} in polynomial time, where $\ell>0$ is a prime number such that $\ell \equiv u \bmod v$.
\end{lemma}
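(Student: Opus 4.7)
The plan is to unpack the data $T = (z, x_1, y_1, h, x_0, y_0, k) \in \mathscr X$ and stitch it into a solution of Equation~\eqref{eq:solveinztabn}. By definition of $\mathscr X$ we have $az + B^2 b h(x_1, y_1) = n$ with $z > 0$, $[k^2 h] = [g]$ in the class group, and $d := k(x_0, y_0)$ divides $B$. Because $\pi(T) \in \mathscr S$, the integer $z$ is already a prime satisfying the required congruence, so we simply set $\ell = z$; the entire task is to exhibit $(x,y) \in \Z^2$ with $g(x,y) = B^2 h(x_1, y_1)$.

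First, I would apply Gauss composition (as in~\cite[Algorithm~5.4.7]{Cohen13}) to produce integers $(S, T_0)$ with
$$(k^2 h)(S, T_0) \;=\; k(x_0, y_0)^2 \cdot h(x_1, y_1) \;=\; d^2 \, h(x_1, y_1).$$
This step runs in polynomial time in the size of the input.

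Second, since $[k^2 h] = [g]$ and every positive definite class contains a unique reduced form, I would reduce $k^2 h$ and $g$ using the standard algorithm~\cite[Algorithm~5.4.2]{Cohen13}: the composition of the two reducing transformations yields, in polynomial time, a matrix $\sigma \in \SL_2(\Z)$ with $(k^2 h)^\sigma = g$. Setting $(S', T'_0) = \sigma^{-1}(S, T_0)$ then gives $g(S', T'_0) = (k^2 h)(S, T_0) = d^2 h(x_1, y_1)$.

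Finally, using that $d \mid B$, define
$$(x, y) \;=\; \bigl( (B/d)\,S',\; (B/d)\,T'_0 \bigr) \in \Z^2.$$
Then $g(x,y) = (B/d)^2 \, g(S', T'_0) = (B/d)^2 \, d^2 \, h(x_1, y_1) = B^2 h(x_1, y_1)$, so
$$a\ell + b\,g(x,y) \;=\; az + B^2 b\, h(x_1, y_1) \;=\; n,$$
and the triple $(\ell, x, y)$ is the desired solution. Each subroutine (Gauss composition, reduction of binary forms, applying $\sigma^{-1}$) is a standard polynomial-time algorithm, so no real obstacle arises; the proof is essentially a bookkeeping exercise in the classical dictionary between binary quadratic forms and ideals in quadratic orders.
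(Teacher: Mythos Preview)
Your proof is correct and follows essentially the same route as the paper: use Gauss composition to represent $k(x_0,y_0)^2h(x_1,y_1)$ by $k^2h$, pass to $g$ via an $\SL_2(\Z)$-equivalence obtained by reducing both forms, and then scale by $B/d$. The paper packages the first two steps in a footnote (in the proof of Lemma~\ref{lemma:randomizingontheleft}) but the content is identical.
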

\begin{proof}
Let $T = (\ell,x_1,y_1,h,x_0,y_0,k) \in \mathscr X$ such that $\pi(T) \in \mathscr S$.
We then have
$$a\ell + B^2bh(x_1,y_1) = n.$$
Since $[k^2h] = [g]$, one can compute $x_2$ and $y_2$ such that
$k(x_0,y_0)^2h(x_1, y_1) = g(x_2, y_2)$ (see the footnote from the proof of Lemma~\ref{lemma:randomizingontheleft}). 
We obtain
$$B^2h(x_1,y_1) = (B/d)^2k(x_0,y_0)^2h(x_1, y_1) = (B/d)^2g(x_2, y_2) = g(x_2B/d, y_2B/d),$$
where $d = k(x_0,y_0)$. Then, $(\ell, x_2B/d, y_2B/d)$ is a solution of Equation~\eqref{eq:solveinztabn}.
\end{proof}

\begin{lemma}\label{lem:wecansampleinX}
Suppose $a$ is an odd prime power dividing $\disc(g)$ and coprime to $b$.
There is an algorithm that samples elements $T$ in $\mathscr X$ such that $\pi(T)$ is close to uniformly distributed in $X$, and runs in expected polynomial time. More precisely, the probability of any $x \in X$ is between $1/(2\#X)$ and $3/(2\#X)$.
\end{lemma}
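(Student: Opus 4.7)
The plan is to sample $T\in\mathscr X$ by first choosing the class $[h]$ through its relation to $[g]$ via a random square, and then sampling $(z,x_1,y_1)$ for the reduced representative. Concretely: (i) apply Lemma~\ref{lem:BRepresentedByAnyClass} to draw a class $[k]$ uniformly at random in $\Cl(\disc(g))$, together with integers $(x_0,y_0)$ such that its reduced representative $k$ satisfies $k(x_0,y_0)\mid B$; (ii) compute the reduced form $h$ in the class $[g][k]^{-2}$, which guarantees $[k^2h]=[g]$ and that $h$ lies in the genus of $g$; (iii) apply Proposition~\ref{prop:samplingnbfxya} to the equation $az+B^2bh(x_1,y_1)=n$: sample $i\in\{1,\dots,2^{\omega(a)}\}$ uniformly at random and then a uniform element $(z,x_1,y_1)\in X_i^{(h)}$. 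Return $T=(z,x_1,y_1,h,x_0,y_0,k)$.

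To analyse $\pi(T)$, I would use that the map $[k]\mapsto[g][k]^{-2}$ is a $|\Cl(\disc(g))[2]|$-to-one surjection from $\Cl(\disc(g))$ onto the coset $[g]\cdot\Cl(\disc(g))^2$, which is precisely the genus of $g$. Hence $[h]$ is uniformly distributed in this genus, and $\Pr[[h]=[h_0]] = 1/H_g$ for every class $[h_0]$ in it, where $H_g$ denotes the number of classes in the genus. Solvability of $az+B^2bh(x,y)=n$ modulo $a\mid\disc(g)$ is a genus invariant and is guaranteed by the hypothesis of Theorem~\ref{thm:solvingthecomplicatedequation}, so Proposition~\ref{prop:samplingnbfxya} applies for every representative $h_0$ in the genus and partitions the set of solutions into $2^{\omega(a)}$ pieces $X_i^{(h_0)}$, each of size $M+O(E)$, where $M=\pi n/(B^2ba\Vol(g))$ is independent of $[h_0]$ and $E=O((n/b)^{1/2}+a\Vol(g))$. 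Consequently a specified $(z,x,y,[h_0])\in X$ is produced with probability $1/\bigl(H_g\cdot 2^{\omega(a)}\cdot \#X_{i(z,x,y)}^{(h_0)}\bigr)$, while $\#X$ lies in $\bigl[H_g\cdot 2^{\omega(a)}(M-E),\,H_g\cdot 2^{\omega(a)}(M+E)\bigr]$.

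It remains to check that $E\le M/5$, for then $\#X_i^{(h_0)}\in[4M/5,6M/5]$ uniformly in $(i,[h_0])$, and a direct calculation gives $\Pr[\pi(T)=x]\cdot\#X\in[2/3,\,3/2]$, which is contained in $[1/2,\,3/2]$, as required. The bound $E\le M/5$ unfolds to $n\gg B^4a^2b\Vol(g)^2$. Now $a=\det(\rho\gamma)^2$ and $\Vol(g)\ll\det(\rho\gamma)|\disc(f)|^{1/2}$ are polynomial in $\det(\gamma)$ and $\disc(f)$; the integer $B$ satisfies $\log B=(\log|\disc(g)|)^{O(1)}$ by Lemma~\ref{lem:BRepresentedByAnyClass}; and $b\le n^{1/c}$ by hypothesis. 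Choosing the constant $c$ in Theorem~\ref{thm:solvingthecomplicatedequation} sufficiently large makes $E/M$ vanish polynomially in $n$. The main obstacle is simply this bookkeeping chain of polynomial bounds; each step of the algorithm runs in expected polynomial time by the cited results, so the overall procedure does as well.
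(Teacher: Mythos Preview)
Your proposal is correct and follows essentially the same approach as the paper: draw $[k]$ uniformly (with a represented divisor of $B$) via Lemma~\ref{lem:BRepresentedByAnyClass}, set $h\in[k^{-2}g]$, and then sample a solution of $az+B^2bh(x_1,y_1)=n$ via Proposition~\ref{prop:samplingnbfxya}. The paper's proof is terser, simply noting that local solvability at $a$ is genus-invariant and that the only deviation from uniformity comes from the error term in Proposition~\ref{prop:samplingnbfxya}; your version explicitly carries out the bookkeeping (the $|\Cl[2]|$-to-one squaring map, the partition into $2^{\omega(a)}$ pieces of size $M+O(E)$, and the inequality $E\le M/5$), which the paper leaves implicit.
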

\begin{proof}
Generate a uniformly random class $[k]$ in $\Cl(\disc(g))$ together with a divisor $d$ of $B$ which it represents as $k(z_0,t_0) = d$. Let $h \in [k^{-2}g]$. Following Proposition~\ref{prop:samplingnbfxya}, sample a uniformly random integer solution $(z,x_1,y_1)$ of $az + bB^2h(x_1, y_1) = n$, with $z>0$. Note that the local solvability at $a$ in Proposition~\ref{prop:samplingnbfxya} is satisfied for any $k$ (because $h$ is always in the same genus), so the deviation from uniformity only comes from the error term. 
\end{proof}

\subsection{Proof of Theorem~\ref{thm:solvingthecomplicatedequation}}
Recall that $a = \det(\rho\gamma)^2$ and $g = f^{\rho\gamma}$.
From the assumption on the solutions modulo $\disc(f)$, there exists an invertible $u \bmod \disc(f)$ represented by the genus of $f$ and such that $\chi\left(\frac{n - ua}{b}\right) \neq -1$ (an exhaustive search finds it in time polynomial in $\disc(f)$).
From Lemma~\ref{lem:wecansampleinX}, we can efficiently sample $T \in \mathscr X$ such that $\pi(T)$ is uniform in $X$. From Lemma~\ref{lem:samplinginSissufficient}, we are done as soon as $\pi(T) \in \mathscr S$. Indeed, such a $T$ gives a solution of
$$n = a z + b g(x_0,x_0) = \det(\gamma)^2\det(\rho)^2\ell + b(f^{\gamma})^{\rho}(x_0, y_0),$$
giving rise via Lemma~\ref{lemma:randomizingontheleft} to a solution of $\det(\gamma)f(s,t) + bf^\gamma(x, y) = n$.
Then, it only remains to prove that $\#X/\#\mathscr S$ is small.
%
On one hand, from Proposition~\ref{prop:samplingnbfxya},
we have $$\#X \ll \frac{n2^{\omega(a)}h}{B^2 b a |\disc(g)|^{1/2}} \ll \frac{n\log|a\disc(f)|}{B^2 b a},$$ where $h \ll 2^{-\omega(a)} |\disc(g)|^{1/2}\log|\disc(g)|$ is the number of classes in the genus of $g$. On the other hand, the local solvability of the equation modulo $\disc(g)$ together with Corollary~\ref{coro:coromaintheoremthatimpliesklpt} implies that there is a constant $c'$ such that
\begin{align*}
\#\mathscr S & \gg \sum_{\substack{u' \bmod \disc(f) \det(\rho\gamma)\\ u' \equiv u \bmod \disc(f)\\ (u',\det(\rho\gamma)) = 1}}\frac{n\left(1 + \chi\left(\frac{n - ua}{b}\right)\right)}{B^2 b a \phi(\disc(f) \det(\rho\gamma)) (\log n)^{c'}}
 =\frac{n\left(1 + \chi\left(\frac{n - ua}{b}\right)\right)}{B^2 b a \phi(\disc(f)) (\log n)^{c'}}.
 \end{align*}
Note that since $a = \det(\rho\gamma)^2$ is 
coprime to $n$, the condition
$\GCD(n-au',\det(\rho\gamma)) = 1$ (required for Corollary~\ref{coro:coromaintheoremthatimpliesklpt}) is satisfied for any $u'$.
The theorem follows. 
\qed


\subsection{Representing integers in special orders}
Theorem~\ref{thm:solvingthecomplicatedequation} has an immediate, but important corollary.
Recall that for any prime $p$, we denote by $\cO_0$ the special order in $B_{p,\infty}$ defined in Lemma~\ref{lem:specialorders}.

\begin{corollary}[GRH]\label{coro:repintspecialorder}
There is a constant $c$ and an algorithm $\mathscr A$ such that the following holds. For any prime $p$ and integer $n$ with $\log n > (\log p)^c$, if 
either
$\log n \geq \omega(n)^{c}$, or  
the prime divisors of $n$ are larger than $(\log p)^c$, then
the algorithm $\mathscr A$
finds an element $\alpha \in \cO_0$ of reduced norm $n$, and runs in expected polynomial time in $\log p$ and $\log n$. The output $\alpha$ is random with min-entropy at least $\Omega( \log n)$.
\end{corollary}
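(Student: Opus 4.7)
The plan is to apply Theorem~\ref{thm:solvingthecomplicatedequation} with $\gamma$ the identity matrix, $b = p$, and $f$ being the principal binary quadratic form supplied by Lemma~\ref{lem:specialorders}. That lemma asserts that every element $\alpha = s + t\omega + xj + y\omega j$ of the suborder $R + Rj \subseteq \cO_0$ has reduced norm $\Nrd(\alpha) = f(s,t) + pf(x,y)$, so any integer solution $(s,t,x,y)$ of the diophantine equation $f(s,t) + pf(x,y) = n$ immediately produces the desired $\alpha \in \cO_0$.

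With $\gamma = I$, Theorem~\ref{thm:solvingthecomplicatedequation} is tailored to solve precisely this equation, and its hypotheses translate essentially directly into those of the corollary. Indeed $\det(\gamma) = 1$ is trivially coprime to both $\disc(f)$ and $b = p$; by Lemma~\ref{lem:specialorders}, the discriminant $\disc(f) = O((\log p)^2)$ is coprime to $p$ in each of the three cases; the size hypothesis $\log n > (\log p)^c$ of the corollary subsumes, for a sufficiently large $c$, the three constraints $\log n \geq c \log b$, $\log n \geq \log(\det \gamma)^c$, and $\log n \geq \disc(f)^c$ demanded by the theorem; and the two alternatives in the corollary on the prime structure of $n$ correspond exactly to conditions~(\ref{item:conditionnumberdivisorsn}) and~(\ref{item:conditionprimedivisorsn}) of the theorem, once one specialises to $b = p$ and $\det(\gamma) = 1$ (so $(\log \log b)^c$ and $\log(\det\gamma)^c$ collapse to $O((\log \log p)^c)$ and $0$ respectively).

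Two small issues remain to be handled. First, Theorem~\ref{thm:solvingthecomplicatedequation} requires $\GCD(p, n) = 1$; if instead $p^k \| n$, we set $n' = n/p^k$, invoke the corollary recursively to produce $\alpha' \in \cO_0$ of reduced norm $n'$ (the hypotheses transfer since $n'$ differs from $n$ by at most the prime-power factor $p^k$), and return $\alpha = j^k \alpha'$, of reduced norm $p^k n' = n$ since $\Nrd(j) = p$. Second, the theorem's local-solvability hypothesis requires the equation $f(s,t) + pf(x,y) \equiv n \bmod \disc(f)$ to admit a solution with $f(s,t)$ invertible modulo $\disc(f)$; since $\disc(f) = O((\log p)^2)$, an exhaustive enumeration of candidate residues takes polynomial time, and in the residually obstructed cases (depending only on $n \bmod \disc(f)$ and on the case of Lemma~\ref{lem:specialorders}) we bootstrap by replacing $n$ with a nearby locally-solvable target $n - p f(x_0, y_0)$ for a small $(x_0,y_0)$, solving the modified equation, and re-inserting the small contribution at the end. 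The polynomial running time and the $\Omega(\log n)$ min-entropy bound on the output are inherited directly from Theorem~\ref{thm:solvingthecomplicatedequation}.

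The main obstacle, and the only part that is not a purely mechanical unwrapping, is the local-solvability bookkeeping: it requires a short case analysis on $p \bmod 8$ together with the finitely many small adjustments described above, and care that these adjustments do not degrade the min-entropy guarantee. All other steps are direct applications of the tools already developed.
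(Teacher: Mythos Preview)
Your approach is essentially identical to the paper's: apply Theorem~\ref{thm:solvingthecomplicatedequation} with $\gamma = I_2$, $b = p$, and $f$ the principal form from Lemma~\ref{lem:specialorders}, then handle $p \mid n$ multiplicatively via powers of~$j$. The paper's proof does exactly this and nothing more.

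You are in fact more careful than the paper about the local-solvability hypothesis of Theorem~\ref{thm:solvingthecomplicatedequation}, which the paper's own proof silently omits. However, your proposed bootstrap is flawed: replacing $n$ by $n' = n - pf(x_0,y_0)$, solving $f(s,t) + pf(x,y) = n'$, and then ``re-inserting the small contribution'' does not work, because the reduced norm is not additive---there is no algebraic operation on $s + t\omega + xj + y\omega j$ that adds $pf(x_0,y_0)$ to its norm. The correct fix is multiplicative rather than additive: in the obstructed residue classes (namely $n \equiv 2 \bmod 4$ when $\disc(f) = -4$, or $n \equiv 4 \bmod 8$ when $\disc(f) = -8$; compare the analysis at Step~\ref{step:solvingthecomplicatedequation} in the proof of Theorem~\ref{thm:rigorousklpt}), factor $n = \Nrd(\mu)\cdot n''$ for an explicit small $\mu \in \cO_0$ (e.g.\ $\mu = 1+i$ of norm $2$, or $\mu = 2$ of norm $4$) so that $n''$ lands in an unobstructed class, solve for $\alpha''$ of norm $n''$, and return $\mu\alpha''$.
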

\begin{proof}
With notations as in Lemma~\ref{lem:specialorders},
the order $\cO_0$ contains the elements $1,\omega,j$ and $\omega j$, and
$$\Nrd(s + t\omega + xj + y\omega j) = f(s,t) + p f(x,y).$$
When $\GCD(n,p) = 1$, the result follows from Theorem~\ref{thm:solvingthecomplicatedequation} with $b = p$ and $\gamma$ the identity matrix. 
In the case where prime divisors of $n$ are at least $(\log p)^c$, we use that $\disc(f) = O((\log p)^2)$, which allows to satisfy Condition~\eqref{item:conditionprimedivisorsn}.
Since $\Nrd(j) = p$, the general result follows by multiplicativity of the reduced norm.
\end{proof}


\section{Solving the quaternion path problem}\label{sec:newKLPT}
\noindent
For the rest of the article we consider the quaternion algebra $B_{p,\infty}$, with basis $1,i,j,ij$, as defined in Section~\ref{sec:introquat}, and $\cO_0$ is the special maximal order defined in Lemma~\ref{lem:specialorders}.
In this section, we consider the $\QuatPath$ problem.
Since computing connecting ideals between two maximal orders is easy (see~\cite[Algorithm~3.5]{KV10}), it is sufficient to consider the following problem: given a maximal order $\cO$ in $B_{p,\infty}$, a left $\cO$-ideal $I$, and an integer $N$, find an ideal $J$ equivalent to $I$ such that $\Nrd(J) = N$. As noted in~\cite[Section~4.6]{KLPT14}, the general case reduces to the case $\cO = \cO_0$, so we focus on this special maximal order.


\subsection{Random walks between ideal classes}
As a first step towards a rigorous algorithm, we start by randomising the input ideal, thereby avoiding pathological cases.

\begin{definition}[Brandt graph]
Let $p$ be a prime number, and $\cO$ a maximal order in $B_{p,\infty}$. Let $I,J$ be two left $\cO$-ideals. We say $J$ is an \emph{$\ell$-neighbor} of $I$ if $J \subseteq I$ and $\Nrd(J) = \ell \Nrd(I)$. The \emph{$\ell$-Brandt graph} is the graph with vertices $\mathrm{Cls}(\cO)$ and an edge from $[I_i]$ to $[J]$ for each $\ell$-neighbor $J \subseteq I_i$, where $(I_i)_{i=1}^{\#\mathrm{Cls}(\cO)}$ is a list of ideal class representatives.
\end{definition}

Through the Deuring correspondence, the $\ell$-Brandt graph is isomorphic to the $\ell$-isogeny graph (up to the action of $\Gal(\F_{p^2}/\F_p)$). It is common in isogeny-based cyptography to compute random walks on these graphs: at each step on the walk, the current vertex is an elliptic curve $E$, one chooses uniformly at random one of the $\ell+1$ outgoing isogenies, and the next vertex is its target. Equivalently, given a left $\cO$-ideal $I$, one can choose uniformly at random one of the $\ell+1$ left submodules $M \subset I/\ell I$, and the next vertex is $M + \ell I$.

\begin{theorem}[\protect{\cite[Theorem~1]{GPS20}}]\label{thm:randomwalkBrandt}
Let $p$ be a prime number, and $\cO$ a maximal order in $B_{p,\infty}$. Let $N_p$ be the size of the ideal class set of $\cO$. Let $I$ be the ideal obtain from a random walk of norm $n = \prod_{i}\ell_i^{e_i}$. Then, for any ideal class $C$, we have
$$\left| \Pr[I \in C] - \frac 1 {N_p}\right| \leq \prod_i\left(\frac{2\sqrt{\ell_i}}{\ell_i + 1}\right)^{e_i}.$$
\end{theorem}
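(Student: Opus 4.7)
The plan is to prove the theorem as a standard application of the Ramanujan property of Brandt matrices. For each prime $\ell \neq p$, let $B_\ell$ be the matrix indexed by $\mathrm{Cls}(\cO)$ whose $([I],[J])$-entry counts the $\ell$-neighbors of a fixed representative of $[I]$ contained in $[J]$. The single-step transition matrix of the random walk is $P_\ell = B_\ell/(\ell+1)$, and a walk of norm $n = \prod_i \ell_i^{e_i}$ corresponds to applying $P_n = \prod_i P_{\ell_i}^{e_i}$ to the initial distribution. Brandt matrices at distinct primes commute, because coprime isogeny quotients can be taken in either order; they therefore admit a simultaneous eigendecomposition.

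The key spectral input is that $B_\ell$ has the Perron eigenvalue $\ell+1$ on the constant vector $\mathbf 1$, while every other eigenvalue $\mu$ satisfies the Ramanujan bound $|\mu|\leq 2\sqrt{\ell}$. This is Eichler's theorem, resting ultimately on Deligne's proof of the Ramanujan--Petersson conjecture for weight-$2$ cusp forms on $\Gamma_0(p)$, transferred to the quaternion side by Jacquet--Langlands. Combined with commutativity, this means that $P_n$ fixes $\mathbf 1$ and acts on $\mathbf 1^\perp$ with spectral radius at most
$$\Lambda \;:=\; \prod_i \left(\frac{2\sqrt{\ell_i}}{\ell_i+1}\right)^{e_i}.$$

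The conclusion is then a two-line computation. Letting $\delta_{[I_0]}$ and $\delta_C$ denote the indicator vectors of the starting class and the target class $C$, decompose $\delta_{[I_0]} = \tfrac{1}{N_p}\mathbf 1 + u$ with $u\perp\mathbf 1$. Since $P_n\mathbf 1 = \mathbf 1$,
$$\Pr[I\in C] - \frac{1}{N_p} \;=\; \langle P_n\delta_{[I_0]},\delta_C\rangle - \frac{1}{N_p} \;=\; \langle P_n u,\delta_C\rangle,$$
and Cauchy--Schwarz together with $\|u\|_2\leq \|\delta_{[I_0]}\|_2 = 1$ and $\|\delta_C\|_2 = 1$ gives the claimed $|\Pr[I\in C]-1/N_p|\leq \Lambda$.

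The main obstacle I anticipate is the slight non-regularity of the Brandt graph caused by the unit groups of the right orders: $B_\ell$ is self-adjoint only with respect to the weighted inner product $\langle \delta_{[I]},\delta_{[J]}\rangle_w = w_{[I]}^{-1}\delta_{[I],[J]}$, where $w_{[I]} = \#(\cO_R(I)^\times/\{\pm 1\})$, and the truly invariant distribution is proportional to $w_{[I]}$ rather than uniform. Only $O(1)$ classes have $w_{[I]}>1$, so the cleanest route is to carry out the argument above in the weighted inner product and then compare $w_{[I]}/\sum_{[J]}w_{[J]}$ to $1/N_p$, absorbing the discrepancy into the stated bound. This bookkeeping is where care is needed, and it follows the treatment of~\cite{GPS20}.
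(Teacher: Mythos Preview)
Your proposal is correct and matches the paper's approach: the paper simply cites \cite[Theorem~1]{GPS20} and notes that the result is a consequence of the Ramanujan property of the $\ell_i$-Brandt (equivalently, $\ell_i$-isogeny) graphs, which is exactly the spectral argument you have written out. The weight/automorphism subtlety you flag is indeed the only bookkeeping required, and deferring to \cite{GPS20} for its treatment is precisely what the paper does as well.
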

\begin{proof}
This is precisely~\cite[Theorem~1]{GPS20}, translated from isogenies to quaternions through the Deuring correspondence. It is a consequence of the fact that each $\ell_i$-Brandt graph (or $\ell_i$-isogeny graph) has the Ramanujan property.
\end{proof}

\subsection{Solving the quaternion analog of the isogeny-path problem}
The main result of this section is the following theorem.
\begin{theorem}[GRH]\label{thm:rigorousklpt}
There exists an integer $c$ such that Algorithm~\ref{alg:EquivIdeal} is correct and runs in expected polynomial time in $\log p$, $\log n_i$, $\log \Nrd(I)$ and $\ell$ for all inputs satisfying $\log n_i \geq (\log p)^c$, and $n_2\ell^e \not\equiv 2,4 \bmod 8$ for $e \in \{0,1\}$, and either $\log n_2 \geq \omega(n_2)^c$, or all prime divisors of $n_2\ell$ are larger than $(\log p)^c$.
\end{theorem}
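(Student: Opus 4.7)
The strategy is to transcribe the skeleton of the heuristic KLPT algorithm~\cite{KLPT14}, replacing each hand-waved subroutine with a rigorous polynomial-time counterpart from the preceding sections.

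First, I would apply Proposition~\ref{prop:algEquivPrimeIdealLargeNonQuad} to replace $I$ with an equivalent left $\cO$-ideal of prime norm $N_0$ of polynomial size, such that $\ell$ is a non-residue modulo $N_0$. A random walk of norm $n_1$ in the $\ell$-Brandt graph is then executed: by Theorem~\ref{thm:randomwalkBrandt}, once $\log n_1 \geq (\log p)^c$ for $c$ large enough, the resulting ideal's class is statistically close to uniform in $\mathrm{Cls}(\cO)$, which dissolves the \emph{a priori} position of $I$ relative to $\cO_0$. Composing with a connecting ideal between $\cO$ and $\cO_0$ (computable by~\cite[Algorithm~3.5]{KV10}) yields a left $\cO_0$-ideal $J$ whose reduced norm is polynomially bounded.

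The heart of the proof is to find $\alpha \in J$ of reduced norm $\Nrd(J)\, n_2 \ell^e$ for a suitable exponent $e \in \{0,1\}$: then $J' = J\overline\alpha/\Nrd(J)$ is a left $\cO_0$-ideal equivalent to $J$ of norm $n_2\ell^e$, which translates back through the connecting ideal to produce the claimed output. Inside $\cO_0$, the suborder $R+Rj$ from Lemma~\ref{lem:specialorders} has reduced norm form $\Nrd(s + t\omega + xj + y\omega j) = f(s,t) + p\,f(x,y)$, with $f$ a primitive positive-definite binary form of discriminant $O((\log p)^2)$. Writing the norm of a generic element of $J \cap (R+Rj)$ in a basis adapted to this suborder produces an integer matrix $\gamma$ of rank~$2$ and content~$1$, and the problem reduces to solving
\[
\det(\gamma)^2 f(s,t) + p\, f^\gamma(x,y) \;=\; n_2 \ell^e,
\]
which is exactly the equation handled by Theorem~\ref{thm:solvingthecomplicatedequation} (with $b=p$, coprime to $\disc(f)$). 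The exponent $e$ is chosen so that $n_2\ell^e$ lies in an admissible residue class modulo $\disc(f^\gamma)$; the mod-$8$ hypothesis on $n_2\ell^e$ guarantees local solvability at the prime~$2$ for the $\Z[\omega]$-form $f$, while the dichotomy on $n_2$ matches exactly Conditions~(\ref{item:conditionnumberdivisorsn}) and~(\ref{item:conditionprimedivisorsn}) of that theorem.

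The main obstacle is enforcing the local-solvability hypothesis at every prime dividing $p\det(\gamma)^2\disc(f)$, including the awkward prime~$2$. The randomisation of the first step ensures that the residue of $n_2\ell^e$ modulo each such prime can be adjusted by the free exponent $e$ together with, if needed, a further short random walk or small modification of the connecting ideal, succeeding with constant probability per trial. The exclusion of $n_2\ell^e \in \{2,4\} \bmod 8$ removes precisely the residues at $2$ where no choice of $e \in \{0,1\}$ restores solvability. Once solvability is secured, Theorem~\ref{thm:solvingthecomplicatedequation} delivers $\alpha$ in expected polynomial time, yielding the overall complexity polynomial in $\log p$, $\log n_i$, $\log \Nrd(I)$, and $\ell$.
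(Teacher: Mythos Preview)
Your outline captures the high-level shape of the argument---randomise, pass to prime norm, invoke Theorem~\ref{thm:solvingthecomplicatedequation}---but it skips the central mechanism of the KLPT method, and the sentence ``writing the norm of a generic element of $J\cap(R+Rj)$ in a basis adapted to this suborder produces an integer matrix $\gamma$ of rank~$2$ and content~$1$'' is where the gap lies. The lattice $J\cap(R+Rj)$ has rank~$4$, and its norm form does \emph{not} in general decompose as $\det(\gamma)^2 f(s,t)+p\,f^\gamma(x,y)$: a sublattice of $R\oplus Rj$ need not split as $L_1\oplus L_2 j$. What actually produces that shape is the KLPT \emph{ideal normal form}: one first finds $\gamma\in\cO_0$ with $\Nrd(\gamma)=N$ (Corollary~\ref{coro:repintspecialorder}), then $\beta\in R$ such that $I''=\cO_0 N+\cO_0\gamma\beta j$, and only then does the search restrict to elements $\gamma\alpha$ with $\alpha=(s+t\omega)N+(x'+y'\omega)j$ and $x'+y'\omega\in\Z\beta+RN$. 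This sublattice of $I''$ is not $I''\cap(R+Rj)$; it is a carefully engineered rank-$4$ family whose norm form is exactly $N\cdot\bigl(N^2 f(s,t)+p\,f^\Gamma(x,y)\bigr)$, with $\det(\Gamma)=N$ prime---which is also what makes the ``factorisation of $\det(\gamma)$ known'' hypothesis of Theorem~\ref{thm:solvingthecomplicatedequation} trivially satisfied, a point your write-up does not address.

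A second, more subtle omission is the justification that $\beta$ exists. The KLPT recipe for $\beta$ (\cite[Section~4.3]{KLPT14}) fails precisely when $I''/N\cO_0$ or $\cO_0\gamma/N\cO_0$ is one of the (at most two) fixed points of the $(R/NR)^\times$-action. The paper handles this rigorously: the random walk of norm~$n_1$ (Theorem~\ref{thm:randomwalkBrandt}) makes the class of $I''$ close to uniform, so it is almost surely not induced by an $R$-ideal and hence not a fixed point; and Corollary~\ref{coro:repintspecialorder} guarantees that $\gamma$ has min-entropy $\Omega(\log N)$, so that $\cO_0\gamma/N\cO_0$ avoids the two bad cosets with high probability (using that the map $\gamma\mapsto\cO_0\gamma/N\cO_0$ is $O(1)$-to-$1$). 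Your ``further short random walk or small modification of the connecting ideal'' does not substitute for either of these arguments. Finally, a minor point: the input $I$ is already a left $\cO_0$-ideal by hypothesis, so no connecting ideal is needed.
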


\begin{algorithm}[h]
 \caption{$\EquivIdeal_c(I,n_1,n_2)$}\label{alg:EquivIdeal}
 \begin{algorithmic}[1]
 \REQUIRE {A left ideal $I$ in the special maximal order $\cO_0$, positive integers $n_1,n_2$, and a prime~$\ell$.}
 \ENSURE {An equivalent ideal $J$ of norm $n_1n_2$ or $n_1n_2\ell$.}
 
\STATE Define $R$, $\omega$ and $f$ as in Lemma~\ref{lem:specialorders}.
\WHILE {$\beta$ has not been found}
\STATE $I' \gets$ the endpoint $I' \subset I$ of a random walk of norm $n_1$ in the Brandt graph; \COMMENT{Theorem~\ref{thm:randomwalkBrandt}}
\label{state:randomizeidealclassset}
\STATE $(I'',\rho) \gets $ an ideal $I''$ equivalent to $I'$, of prime norm $N \in [p^c,p^{2c}]$ such that $\ell$ is a non-quadratic residue modulo $N$, and the element $\rho \in I'$ such that $I'' = I'\overline{\rho}/\Nrd(I')$; 
\COMMENT{Proposition~\ref{prop:algEquivPrimeIdealLargeNonQuad}}
\STATE \label{step:randomelemofnormNn2} $\gamma \gets $ an element $\gamma \in \cO_0$ such that $\Nrd(\gamma) = N$;
\COMMENT{Corollary~\ref{coro:repintspecialorder}}
\STATE $\beta \gets $ an element $\beta \in R$ such that $I'' = \cO_0 N + \cO_0 \gamma \beta j$ if it exists;\label{step:findingbeta}
\ENDWHILE
\STATE $\Gamma \gets $ a matrix in $M_{2\times 2}(\Z)$ such that $\Z\beta + RN = \left\{x+y\omega \mid (x,y)\in \Gamma \Z^2\right\}$;
\STATE \label{step:solvingthecomplicatedequation}$(s,t,x,y) \gets $ an integral solution of $N^2 f(s,t) + p f^\Gamma(x,y) = n_2\ell^e$ for some $e \in \{0,1\}$;
\COMMENT{Theorem~\ref{thm:solvingthecomplicatedequation}}
\STATE $(x',y') \gets \Gamma(x,y)$;
\STATE $\alpha \gets (s + t \omega)N + (x' + y' \omega)j$;
\STATE $\delta \gets \rho\gamma\alpha/N \in I' \subset I$;
\RETURN $J = I\overline{\delta}/\Nrd(I)$.
  \end{algorithmic}
 \end{algorithm}

\begin{proof}
The efficiency and correctness of most steps are already justified by the various results referred to in the comments of Algorithm~\ref{alg:EquivIdeal}. The constraints on $n_2$ come from Theorem~\ref{thm:solvingthecomplicatedequation}.
\\

\noindent \emph{Step~\ref{state:randomizeidealclassset}.} From Theorem~\ref{thm:randomwalkBrandt}, there is a constant $c$ such that if $\log n_1 \geq (\log p)^c$, then $I'$ is in any given class set with probability between $1/2N_p$ and $3/2N_p$, with $N_p$ the number of classes.\\

\noindent \emph{Step~\ref{step:randomelemofnormNn2}.} Corollary~\ref{coro:repintspecialorder} requires $N = \Nrd(I'')$ to be large enough; therefore, in constructing $I''$, we resort to Proposition~\ref{prop:algEquivPrimeIdealLargeNonQuad} rather that Theorem~\ref{thm:algEquivPrimeIdeal}. This issue is dealt with differently in~\cite{KLPT14}: they solve an equation of the form $\Nrd(\gamma) = Nn_3$ for some large enough $n_3$. Our approach has a theoretical advantage: Corollary~\ref{coro:repintspecialorder} ensures that $\gamma$ has large entropy, which allows to avoid corner cases in Step~\ref{step:findingbeta}.
More precisely, let us prove that $\cO_0\gamma / \cO_0N$ has large entropy. It is sufficient to prove that the map $\gamma \mapsto \cO_0\gamma / \cO_0 N$, for $\gamma \in \cO_0$ of norm $N$, has small fibre. Suppose that $\cO_0\gamma / \cO_0 N = \cO_0\gamma' / \cO_0 N$. Then, there exists $x,y \in \cO_0$ such that $\gamma' = x\gamma + yN$. Then,
\begin{align*}
\gamma' = x\gamma + yN = (x + y\overline{\gamma})\gamma.
\end{align*}
Comparing norms, we deduce $\Nrd(x+y\overline{\gamma}) = 1$, hence $\gamma' \in \cO_0^\times\gamma$. Since $\#\cO_0^\times \leq 6$, the map $\gamma \mapsto \cO_0\gamma / \cO_0 N$ is $O(1)$-to-1, which proves that $\cO_0\gamma / \cO_0 N$ has large entropy.
\\

\noindent \emph{Step~\ref{step:findingbeta}.} 
This step is solved with elementary linear algebra, as described in~\cite[Section~4.3]{KLPT14}. The method of~\cite[Section~4.3]{KLPT14} succeeds under the assumption that $I''/N\cO_0$ and $\cO_0\gamma/N\cO_0$ are distinct from the (at most two) fixed points for the action of $(R/NR)^\times$. This is heuristically assumed in~\cite{KLPT14}, but with our new methods, we can prove it. The large entropy of $\cO_0\gamma/N\cO_0$ ensures that with good probability, it is not one of the two fixed points.
From Step~\ref{state:randomizeidealclassset}, $I'$ is close to uniformly distributed in the class set, so with overwhelming probability it is not equivalent to an ideal induced by an $R$-ideal (i.e., to an ideal of the form $\cO_0\mathfrak a$ for some $R$-ideal $\mathfrak a$). It is then also the case of $I''$, so with good probability, it is not a fixed point either (the ideals of norm $N$ that are fixed points are induced by the $R$-ideals above $N$).\\

\noindent \emph{Step~\ref{step:solvingthecomplicatedequation}.}
Most conditions for Theorem~\ref{thm:solvingthecomplicatedequation} are already met.
The value $e \in \{0,1\}$ is determined by the constraint that the equation must have a solution modulo $N$.
It remains to justify that the equation does have a solution in $G = \Z/\disc(f)\Z$ for which $f(s,t)$ is invertible.
Suppose $p\equiv 1 \bmod 8$, so from Lemma~\ref{lem:quaternionpq}, $\disc(f)$ is a negative odd prime.
From~\cite[Theorem 3.15]{Cox11}, there is only one genus of forms of discriminant $\disc(f)$, so $f$ represents all quadratic residues in $G$. Since $(N, \disc(f))=1$, the form $f^\Gamma$ also represents all the quadratic residues in $G$. Lemma~\ref{lem:quaternionpq} implies that $p$ is not a quadratic residue. Any element is $G$ is the sum of a quadratic residue and a quadratic non-residue, so $n_2\ell^e$ also is, and we are done.
Similarly, if $\disc(f) = 4$ there is a solution when $n_2\ell^e \not\equiv 2 \bmod 4$, and if $\disc(f) = 8$ there is a solution when $n_2\ell^e \not\equiv 4 \bmod 8$.
\end{proof}

\begin{remark}
Given a prime $\ell$,
one can choose $n_1$ and $n_2$ to be large enough powers of $\ell$, so
Algorithm~\ref{alg:EquivIdeal} straightforwardly specialises to the power-of-$\ell$ variant $\ell$-$\QuatPath$.
We deal with the powersmooth variant $B$-$\PSQuatPath$ in the next section.
\end{remark}

\subsection{Finding power-smooth paths}
In Theorem~\ref{thm:rigorousklpt}, the integers $n_i$ either have very few prime factors, or the prime factors are not too small. This seems to come at odds with a major application of~\cite{KLPT14}: constructing ideals of powersmooth norm. We now prove that it is not an issue, and we can indeed solve the $B$-$\PSQuatPath$ variant.

\begin{theorem}[GRH]\label{thm:powersmoothclassrep}
There exists an integer $c$ and an algorithm $\mathscr A$ such that the following holds. On input a left $\cO_0$-ideal $I$, 
the algorithm outputs an equivalent ideal $J$ whose norm is $(\log p)^c$-powersmooth, and runs in expected polynomial time in $\log p$ and $\log \Nrd(I)$.
\end{theorem}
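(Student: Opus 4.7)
The strategy is to apply Theorem~\ref{thm:rigorousklpt} (Algorithm~\ref{alg:EquivIdeal}) exactly once, with the input parameters $n_1$, $n_2$, $\ell$ chosen so that they are built entirely from primes sitting in the interval $((\log p)^{c_0}, (\log p)^{c}]$, where $c_0$ is the constant supplied by Theorem~\ref{thm:rigorousklpt} and $c > c_0$ is the smoothness exponent we aim to achieve. The point is that any prime-power factor of the resulting norm $n_1 n_2$ or $n_1 n_2 \ell$ is then automatically at most $(\log p)^{c}$, so the output ideal has $(\log p)^{c}$-powersmooth norm by construction.

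First, using the prime number theorem (or any effective version of it), one observes that for $p$ large enough the interval $((\log p)^{c_0}, (\log p)^{c}]$ contains $\Theta((\log p)^{c}/\log\log p)$ primes whose logarithms sum to $\Theta((\log p)^{c})$. A greedy selection then yields three disjoint sub-families of primes: two squarefree products $n_1$ and $n_2$ with $\log n_i \geq (\log p)^{c_0}$, and a single prime $\ell$ coprime to $n_1 n_2$. Small values of $p$ can be absorbed into the constants by brute force.

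Next, I invoke Theorem~\ref{thm:rigorousklpt} on $(I, n_1, n_2, \ell)$. Every hypothesis is met by construction: $\log n_i \geq (\log p)^{c_0}$; every prime divisor of $n_2 \ell$ exceeds $(\log p)^{c_0}$, which selects the ``large prime divisors'' branch of the hypothesis in place of $\log n_2 \geq \omega(n_2)^{c_0}$; and $n_2 \ell^{e}$ is odd (being a product of odd primes), so cannot be congruent to $2$ or $4$ modulo $8$. Algorithm~\ref{alg:EquivIdeal} then returns a left $\cO_0$-ideal $J$ equivalent to $I$ with $\Nrd(J) \in \{n_1 n_2, n_1 n_2 \ell\}$. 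Each prime factor of this norm lies in $((\log p)^{c_0}, (\log p)^{c}]$ and appears to the first power, so $\Nrd(J)$ is $(\log p)^{c}$-powersmooth. The expected running time is polynomial, since $\log n_i = O((\log p)^{c})$ and $\ell = O((\log p)^{c})$ are polynomial in $\log p$, and Theorem~\ref{thm:rigorousklpt} is polynomial in these parameters and in $\log \Nrd(I)$.

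The main obstacle --- and the reason the theorem cannot be proved with $c = c_0$ --- is that a fully $(\log p)^{c_0}$-powersmooth $n_2$ of polynomial-in-$p$ size would have $\omega(n_2)$ of order $(\log p)^{c_0}/\log\log p$, which simultaneously violates $\log n_2 \geq \omega(n_2)^{c_0}$ and has small prime divisors, hence fails both branches of the hypothesis in Theorem~\ref{thm:rigorousklpt}. Enlarging the smoothness bound from $(\log p)^{c_0}$ to $(\log p)^{c}$ with $c > c_0$ creates just enough room: the resulting interval of admissible primes is non-empty and plentiful, so one can satisfy the ``large prime divisors'' branch while retaining the desired powersmoothness of the output.
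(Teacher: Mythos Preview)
Your proof is correct and follows essentially the same approach as the paper: both apply Theorem~\ref{thm:rigorousklpt} once with $n_1, n_2, \ell$ built from primes in an interval $((\log p)^{c_0}, (\log p)^{c}]$, invoking the prime number theorem to guarantee enough such primes, and selecting the ``large prime divisors'' branch of the hypothesis. The only cosmetic differences are that the paper takes $n_1 = n_2$ with each prime raised to a suitable power (so that $c = 2(c_0+\delta)$ accounts for the squaring in $n_1 n_2$), whereas you take $n_1, n_2, \ell$ squarefree with disjoint prime supports; your variant is arguably cleaner since the output norm is then squarefree and powersmoothness reduces to a bound on the primes themselves.
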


\begin{proof}
It is sufficient to prove that one can find suitable powersmooth integers $n_i$ to apply Theorem~\ref{thm:rigorousklpt}. 
Let $c_0$ be the constant from Theorem~\ref{thm:rigorousklpt}, and let $\delta>0$ be some parameter to be adjusted. Let $c = 2(c_0 + \delta)$.
We need to construct two $(\log p)^{c_0+\delta}$-powersmooth integers $n_i$  such that $\log n_i > (\log p)^{c_0}$ and prime divisors of $n_2$ are larger than $(\log p)^{c_0}$.
We choose
$$n_1 = n_2 = \prod_{(\log p)^{c_0} < \ell < (\log p)^{c_0+\delta}} \ell ^{\frac{(c_0+\delta)\log p}{\log \ell}}.$$
Then, from the prime number theorem (with Riemann's hypothesis),
\begin{align*}
\log n_2 \geq \sum_{(\log p)^c_0 < \ell < (\log p)^{c_0+\delta}} \log \ell
= (\log p)^{c_0}\left((\log p)^\delta - 1\right) + O\left((\log p)^{\frac{c_0+\delta}{2}}(\log \log p)^2\right).
\end{align*}
Choosing $\delta$ large enough ensures that $\log n_2 > (\log p)^{c_0}$, which concludes the proof.
\end{proof}

\section{Maximal Order and Isogeny Path are equivalent}\label{sec:MOequivPath}
\noindent
In this section and the next, we prove that $\EllIsoPath$, $\EndRing$ and $\MaxOrder$ are all equivalent, under probabilistic polynomial-time reductions. We start in this section by showing that $\EllIsoPath$ is equivalent to $\MaxOrder$.

\subsection{Maximal Order reduces to Isogeny Path}
From Lemma~\ref{lem:idealtoisogeny}, we know how to translate powersmooth $\cO_0$-ideals into isogenies.
The following lemma deals with the converse direction.
\begin{lemma}\label{lem:isogenytoideal}
Let $\cO_0$ and $E_0$ as in Lemmata~\ref{lem:specialorders} and~\ref{lem:constructingE0}.
There exists an algorithm which, given an isogeny $\varphi: E_0 \rightarrow E$ of degree $\prod_i \ell_i^{e_i}$, returns the corresponding left $\cO_0$-ideal $I_\varphi$. The complexity of this algorithm is polynomial in $\log p$ and $\max_i (\ell_i^{e_i})$ (if $p \equiv 1 \bmod 8$, we assume GRH).
\end{lemma}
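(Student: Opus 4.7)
The plan is to compute $I_\varphi$ locally at each prime dividing $\deg(\varphi)$ and then combine the local pieces. Recall that by the Deuring correspondence,
$$I_\varphi = \Hom(E, E_0)\varphi = \{\alpha \in \End(E_0) \mid \ker(\varphi) \subseteq \ker(\alpha)\},$$
a left $\cO_0$-ideal of reduced norm $\deg(\varphi)$. Writing $N = \prod_i \ell_i^{e_i}$ with the $\ell_i$ pairwise distinct, the kernel decomposes as $\ker(\varphi) = \bigoplus_i K_i$ with $K_i \subseteq E_0[\ell_i^{e_i}]$. Let $I_i = \{\alpha \in \cO_0 \mid \alpha(K_i) = 0\}$; each $I_i$ is a left $\cO_0$-ideal of reduced norm $\ell_i^{e_i}$, and since these norms are pairwise coprime, we have $I_\varphi = \bigcap_i I_i$. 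I would recover a $\Z$-basis of $I_\varphi$ from $\Z$-bases of the $I_i$ by a standard Hermite-normal-form intersection, which is polynomial in the total input size.

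To compute each $I_i$, I first extract $K_i$ from the input. When $\varphi$ is given as a chain of $\ell$-isogenies (the output format of $\EllIsoPath$), one selects the sub-chain of $\ell_i$-isogenies, which composes to a sub-isogeny of $\varphi$ of degree $\ell_i^{e_i}$ whose kernel is exactly $K_i$; generators of $K_i$ as points on $E_0$ can be obtained by pulling back kernel generators step by step through the chain, using V\'elu's formulas for each single $\ell_i$-step. Next I determine the smallest integer $k_i$ such that $E_0[\ell_i^{e_i}] \subseteq E_0(\F_{p^{2k_i}})$. Because $E_0$ is supersingular with Frobenius $\pi$ satisfying $\pi^2 = -p$, the action of $\pi$ on $E_0[\ell_i^{e_i}]$ has order dividing $\ell_i^{e_i-1}(\ell_i^2 - 1)$, so $k_i = O(\ell_i^{e_i})$ and arithmetic in $\F_{p^{2k_i}}$ costs time polynomial in $\log p$ and $\ell_i^{e_i}$. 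Using division polynomials I then compute a $\Z/\ell_i^{e_i}$-basis of $E_0[\ell_i^{e_i}]$ and express generators of $K_i$ in that basis.

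By Lemma~\ref{lem:constructingE0} we have an explicit isomorphism $\cO_0 \cong \End(E_0)$, which lets me evaluate the action of any fixed $\Z$-basis of $\cO_0$ on the generators of $K_i$. This yields an explicit $\Z/\ell_i^{e_i}$-linear map $\cO_0/\ell_i^{e_i}\cO_0 \to E_0[\ell_i^{e_i}]^{\dim K_i}$ whose kernel is $(I_i + \ell_i^{e_i}\cO_0)/\ell_i^{e_i}\cO_0$; solving this system (and using that $I_i \supseteq \ell_i^{e_i}\cO_0$ by Lagrange) recovers a $\Z$-basis of $I_i$ inside $\cO_0$. Assembling the $I_i$ by the intersection above produces $I_\varphi$.

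The main obstacle will be controlling $k_i$ and performing efficient arithmetic, endomorphism evaluation, and discrete-log-free linear algebra in the potentially large field $\F_{p^{2k_i}}$; the supersingular bound $k_i = O(\ell_i^{e_i})$ is what makes this step polynomial in $\log p$ and $\max_i(\ell_i^{e_i})$, matching the claimed complexity. The GRH assumption for $p \equiv 1 \bmod 8$ is inherited solely from the construction of $(E_0,\iota)$ in Lemma~\ref{lem:constructingE0}.
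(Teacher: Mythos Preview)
Your argument is correct, but it takes a different route from the paper's. Both proofs share the outer structure: decompose the kernel into its $\ell_i$-primary parts $K_i$, recover each local ideal $I_i$ of norm $\ell_i^{e_i}$, and intersect. The difference is in how each $I_i$ is obtained. The paper does not evaluate endomorphisms on $K_i$ at all; instead it simply \emph{enumerates} all left $\cO_0$-ideals of norm $\ell_i^{e_i}$ (there are $O(\ell_i^{e_i})$ of them), converts each to an isogeny via Lemma~\ref{lem:idealtoisogeny}, and keeps the one whose kernel equals $K_i$. This ``guessing'' approach is deliberately crude---the paper even notes it is less efficient than the method of \cite[Lemma~6]{GPS20}---but it has the advantage of being an immediate black-box consequence of Lemma~\ref{lem:idealtoisogeny}, with no need to control the field of definition of the $\ell_i^{e_i}$-torsion or to set up the linear-algebra problem you describe. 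Your approach, by contrast, is essentially the direct method of \cite{GPS20}: it is more efficient and more conceptual, but requires you to justify the bound $k_i = O(\ell_i^{e_i})$ and the cost of the torsion-point arithmetic, which you do correctly. One small remark: the phrase ``discrete-log-free'' is slightly misleading, since expressing $\alpha(P)$ in a basis of $E_0[\ell_i^{e_i}]$ \emph{is} a discrete logarithm; the point is only that it is easy in a group of smooth order $\ell_i^{e_i}$.
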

\begin{proof}
A proof of this lemma was first given in~\cite{KrigsmanThesis}, building upon the heuristic result~\cite[Lemma~6]{GPS20}. It can also be seen as a consequence of Lemma~\ref{lem:idealtoisogeny}: for each $i$,
\begin{enumerate}
\item Enumerate the set $S_i$ of all left $\cO_0$-ideals of norm $\ell_i^{e_i}$ (see~\cite{KV10});
\item For each $J \in S_i$, compute the corresponding isogeny $\varphi_J$ with Lemma~\ref{lem:idealtoisogeny}, and if $\ker (\varphi_J) = \ker(\varphi) \cap E_0[\ell_i^{e_i}]$, let $I_i = J$.
\end{enumerate}
Finally, return $I_\varphi = \bigcap_i I_i$. Of course, this guessing approach is not as efficient as the method proposed in~\cite[Lemma~6]{GPS20}, but it is still polynomial in $\max_i (\ell_i^{e_i})$.
\end{proof}

To prove that $\MaxOrder$ reduces to $\EllIsoPath$, we show that an isogeny between $E$ and the special curve $E_0$ (of known endomorphism ring) allows to recover the endomorphism ring of $E$. A heuristic version of this approach was described in~\cite{DFMPS19}.

\begin{algorithm}[h]
 \caption{Reducing $\MaxOrder$ to $\EllIsoPath$}\label{alg:EndToPath}
 \begin{algorithmic}[1]
 \REQUIRE {A supersingular elliptic curves $E/\F_{p^2}$, with $p \neq \ell$. We suppose there is an algorithm $\mathscr A_{\EllIsoPath}$ that solves the $\ell$-$\IsoPath$ problem.}
 \ENSURE {A basis of an order in $B_{p,\infty}$ isomorphic to $\End(E)$.}
  \STATE $c \gets$ the constant from Theorem~\ref{thm:powersmoothclassrep};
\STATE $(\cO_0,E_0) \gets$ the special order and curve from Lemmata~\ref{lem:specialorders} and~\ref{lem:constructingE0};
\STATE $\varphi \gets \mathscr A_{\EllIsoPath}(E_0,E)$, with $\varphi = \varphi_e\circ\dots\circ\varphi_1$, and $\deg(\varphi_i) = \ell$;
\STATE $\psi_0 \gets$ the identity isogeny $E_0 \rightarrow E_0$;
\FOR{$i = 1,\dots,e$}
\STATE $I_i \gets$ the ideal corresponding to $\varphi_i \circ \psi_{i-1}$; \COMMENT{Lemma~\ref{lem:isogenytoideal}}
\STATE $J_i \gets $ an ideal equivalent to $I_i$, with $(\log p)^c$-powersmooth norm; \COMMENT{Theorem~\ref{thm:powersmoothclassrep}}
\STATE $\psi_i \gets $ the isogeny corresponding to $J_i$; \COMMENT{Lemma~\ref{lem:idealtoisogeny}}
\ENDFOR
\STATE $\cO \gets \cO_R(J_e)$; 
\COMMENT{\cite[Theorem~3.2]{Ron92}}
\RETURN A basis of $\cO$.
  \end{algorithmic}
 \end{algorithm}

\begin{theorem}[GRH]\label{thm:EndToPath}
The reduction in Algorithm~\ref{alg:EndToPath} is correct and runs in expected polynomial time in $\log p$ and the output size of $\mathscr A_{\EllIsoPath}$, plus one call to $\mathscr A_{\EllIsoPath}$.
\end{theorem}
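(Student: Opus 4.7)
The plan is to verify correctness by induction on the loop iteration $i$, then check that each step runs in polynomial time, with the entire analysis reducing to controlling the powersmoothness of the intermediate ideals.

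For correctness, I would maintain the invariant that after the $i$-th iteration, $\psi_i : E_0 \to E_i$ is an isogeny whose target agrees with the target of $\varphi_i \circ \cdots \circ \varphi_1$ (up to an explicit isomorphism propagated along the way). The base case $i = 0$ is the identity on $E_0$. For the inductive step, the composition $\varphi_i \circ \psi_{i-1}$ is an isogeny $E_0 \to E_i$; Lemma~\ref{lem:isogenytoideal} translates it into a left $\cO_0$-ideal $I_i$ with $\cO_R(I_i) \cong \End(E_i)$. Since $J_i$ is equivalent to $I_i$, the Deuring correspondence guarantees that $\psi_i := \varphi_{J_i}$ obtained via Lemma~\ref{lem:idealtoisogeny} lands on a curve isomorphic to $E_i$, and composing with the explicit isomorphism normalises targets. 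At termination, $J_e$ is a left $\cO_0$-ideal with $\cO_R(J_e) \cong \End(E_e) \cong \End(E)$, so the output solves $\MaxOrder$.

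For the complexity, let $c$ be the constant of Theorem~\ref{thm:powersmoothclassrep}, so every $J_i$ produced in the loop has $(\log p)^c$-powersmooth norm bounded by a fixed polynomial in $p$. In iteration $i$, the isogeny $\varphi_i \circ \psi_{i-1}$ has degree $\ell \cdot \Nrd(J_{i-1})$, whose largest prime-power factor is at most $\ell \cdot (\log p)^c$ (the worst case being that $\ell$ already divides $\Nrd(J_{i-1})$ to a power $\ell^k \le (\log p)^c$, giving $\ell^{k+1}$ after multiplication); hence Lemma~\ref{lem:isogenytoideal} produces $I_i$ in polynomial time. Theorem~\ref{thm:powersmoothclassrep} then yields $J_i$ in expected polynomial time, as $\log \Nrd(I_i)$ is polynomially bounded. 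Lemma~\ref{lem:idealtoisogeny} converts $J_i$ back to $\psi_i$ in polynomial time by powersmoothness of $\Nrd(J_i)$. The number of iterations equals the length $e$ of the $\ell$-isogeny path returned by $\mathscr A_{\EllIsoPath}$, which is at most its output size, and the final computation of $\cO_R(J_e)$ is standard linear algebra on quaternion ideals~\cite[Theorem~3.2]{Ron92}. Overall the running time is polynomial in $\log p$ and the output size of $\mathscr A_{\EllIsoPath}$, plus a single call to that subroutine.

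The main conceptual obstacle — and the reason the algorithm proceeds step-by-step rather than in one shot — is that one cannot afford to invoke Lemma~\ref{lem:isogenytoideal} directly on the full $\varphi$: its degree is $\ell^e$ with largest prime-power factor $\ell^e$, exponential in $e$. The incremental strategy, combined with aggressive powersmoothing of the intermediate ideal after every single step, is what keeps the degrees at which Lemma~\ref{lem:isogenytoideal} is invoked polynomial in the input. This is precisely where the heuristic KLPT-based reasoning of~\cite{DFMPS19} is replaced by the rigorous powersmoothing routine of Theorem~\ref{thm:powersmoothclassrep}, developed in Section~\ref{sec:newKLPT}.
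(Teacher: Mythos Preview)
Your proposal is correct and follows the same approach as the paper's own proof, which is considerably terser: the paper simply states the invariant that $J_i$ is equivalent to the ideal corresponding to $\varphi_i\circ\dots\circ\varphi_1$ (hence $\cO_R(J_i)\cong\End(E_i)$) and defers the running time to the cited results at each step. Your write-up is a faithful expansion of that sketch, with the added value of making explicit the target-matching via isomorphisms, the bound on the largest prime-power factor of $\deg(\varphi_i\circ\psi_{i-1})$, and the reason the algorithm cannot apply Lemma~\ref{lem:isogenytoideal} to $\varphi$ in one shot.
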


\begin{proof}
Write $\varphi_i : E_{i-1} \rightarrow E_i$, with $E_e = E$. 
At each step of the loop, we have that $J_i$ is equivalent to the ideal corresponding to $\varphi_i\circ\dots\circ\varphi_1$, 
hence $\cO_R(J_i) \cong \End(E_i)$. This proves the correctness. The running time follows from the results cited at each step of Algorithm~\ref{alg:EndToPath}.
\end{proof}

\subsection{Isogeny Path reduces to Maximal Order}
To reduce $\EllIsoPath$ to $\MaxOrder$, we prove that one can translate left $\cO_0$-ideals of norm a power of $\ell$ to the corresponding isogeny.

\begin{algorithm}[h]
 \caption{Translating a left $\cO_0$-ideal  of prime-power norm to an isogeny}\label{alg:idealtoisogeny}
 \begin{algorithmic}[1]
 \REQUIRE {A left $\cO_0$-ideal $I$ of norm $\ell^e$, with $\ell \neq p$ prime, and $\ell \nmid I$.}
 \ENSURE {The corresponding isogeny $\varphi_I$.}
  \STATE $c \gets$ the constant from Theorem~\ref{thm:powersmoothclassrep};
 \FOR{$i = 1,\dots,e$}
 \STATE $I_{i} \gets I + \cO_0 \ell^{i}$;
 \STATE $J_{i} \gets $ an ideal equivalent to $I_{i}$, with $(\log p)^c$-powersmooth norm; \COMMENT{Theorem~\ref{thm:powersmoothclassrep}}
 \STATE $\psi_{i} \gets $ the isogeny corresponding to $J_{i}$; \COMMENT{Lemma~\ref{lem:idealtoisogeny}} 
 \STATE $E_{i} \gets \mathrm{target}(\psi_{i})$;
 \STATE $\varphi_{i} \gets$ the $\ell$-isogeny from $E_{i-1}$ to $E_{i}$; \COMMENT{see~\cite{velu}}
\ENDFOR
\RETURN $\varphi_e \circ \dots \circ \varphi_1$.
  \end{algorithmic}
 \end{algorithm}

\begin{lemma}\label{lem:idealtoisogenypower}
Algorithm~\ref{alg:idealtoisogeny} is correct and runs in  expected polynomial  time  in $\log p$, $\ell$ and $e$ (if $p \equiv 1 \bmod 8$, we assume GRH).
\end{lemma}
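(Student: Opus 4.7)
The plan is to verify correctness and efficiency of each step of Algorithm~\ref{alg:idealtoisogeny}, piecing together the tools developed earlier. First, I would establish that the sequence $I_i = I + \cO_0 \ell^i$ gives a descending chain $\cO_0 \supsetneq I_1 \supsetneq I_2 \supsetneq \cdots \supsetneq I_e = I$ of left $\cO_0$-ideals, with $\Nrd(I_i) = \ell^i$. The identity $I_e = I$ comes from the standard inclusion $\Nrd(I) \cO_0 \subseteq I$; the fact that the chain is strictly decreasing with $\ell$-neighbour steps uses the hypothesis $\ell \nmid I$, i.e.\ $I$ is not divisible by the two-sided ideal $\ell \cO_0$, which prevents the filtration of $I/\cO_0 \ell^i$ from collapsing. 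In particular each pair $(I_{i-1}, I_i)$ is an $\ell$-neighbour edge of the Brandt graph.

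Second, for each $i$, Theorem~\ref{thm:powersmoothclassrep} produces in expected polynomial time an equivalent ideal $J_i$ of $(\log p)^c$-powersmooth norm, so by Lemma~\ref{lem:idealtoisogeny} the corresponding isogeny $\psi_i : E_0 \to E_i$ is computable in time polynomial in $\log p$ (since powersmoothness bounds $\max_k \ell_k^{e_k}$ by a polynomial in $\log p$). Because $[J_i] = [I_i]$ and equivalent left ideals have conjugate right orders, $E_i$ is isomorphic over $\overline{\F_p}$ to the target of the ideal-theoretic isogeny $\varphi_{I_i}$ attached to $I_i$.

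Third, to compute $\varphi_i : E_{i-1} \to E_i$, I would enumerate the $\ell + 1$ cyclic subgroups of order $\ell$ in $E_{i-1}[\ell]$ and apply V\'elu's formula~\cite{velu} to each, keeping the one whose codomain has $j$-invariant $j(E_i)$. Since $(I_{i-1}, I_i)$ is an $\ell$-neighbour edge in the Brandt graph, the Deuring correspondence ensures that at least one such candidate matches; this step costs time polynomial in $\ell$ and $\log p$. Looping over $i = 1, \dots, e$ and composing yields an isogeny $\varphi_e \circ \cdots \circ \varphi_1 : E_0 \to E$ of degree $\ell^e$.

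The main obstacle I anticipate is arguing that this composition is genuinely $\varphi_I$ rather than some other $\ell^e$-isogeny between $E_0$ and $E$. The key observation is that the kernel of $\varphi_i \circ \cdots \circ \varphi_1$ is a cyclic subgroup of $E_0[\ell^i]$ whose quotient is isomorphic to $E_i$, and the filtration $I_1 \supsetneq \cdots \supsetneq I_e$ determines the filtration of kernels of $\varphi_I$; hence the two filtrations must coincide up to the standard ambiguity of post-composition by an isomorphism of $E$ (equivalent to the unavoidable Galois action on the target). Collecting the bounds, the total expected running time is $e$ times a polynomial in $\log p$ and $\ell$, matching the claim.
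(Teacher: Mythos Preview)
Your proposal is correct and takes essentially the same approach as the paper's proof, which simply defers to the literature (specifically \cite[Algorithm~7]{EHLMP18}) and observes that substituting Theorem~\ref{thm:powersmoothclassrep} for the heuristic algorithm of~\cite{KLPT14} removes the heuristics. You supply considerably more detail than the paper does, including the filtration argument and the subtlety about whether the composed path equals $\varphi_I$ on the nose; the paper does not address this last point, and in any case for the sole application (Algorithm~\ref{alg:PathToMax}) any $\ell^e$-isogeny landing on the correct target suffices.
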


\begin{proof}
Heuristic versions of this strategy have already appeared in the literature (for instance as a part of~\cite[Algorithm~7]{EHLMP18}). Using Theorem~\ref{thm:powersmoothclassrep} instead of~\cite{KLPT14} makes it rigorous.
\end{proof}

\begin{algorithm}[h]
 \caption{Reducing $\EllIsoPath$ to $\MaxOrder$}\label{alg:PathToMax}
 \begin{algorithmic}[1]
 \REQUIRE {Two supersingular elliptic curves $E_1$ and $E_2$ over $\F_{p^2}$. We suppose we are given the two $\MaxOrder$-solutions
 $\cO_1$ and $\cO_2$, maximal orders in $B_{p,\infty}$ isomorphic to $\End(E_1)$ and $\End(E_2)$ respectively.
 }
 \ENSURE {An $\ell$-isogeny path from $E_1$ to $E_2$.}
 \STATE $c \gets$ the constant from Theorems~\ref{thm:rigorousklpt};
\STATE $e \gets \left\lceil \frac{(\log p)^{c}}{\log \ell}\right\rceil$;
\STATE $(\cO_0,E_0) \gets$ the special order and curve from Lemmata~\ref{lem:specialorders} and~\ref{lem:constructingE0};
 \FOR{$i = 1,2$}
\STATE $I_i \gets I(\cO_0,\cO_i)$ the ideal connecting $\cO_0$ and $\cO_i$; \COMMENT{\cite[Algorithm~3.5]{KV10}}
\STATE $J_i \gets \EquivIdeal_c(I_i,\ell^e,\ell^e,\ell)$; \COMMENT{Theorem~\ref{thm:rigorousklpt}}
\STATE $\varphi_i \gets$ the isogeny corresponding to $J_i$; \COMMENT{Lemma~\ref{lem:idealtoisogenypower}}
\ENDFOR
\RETURN $\varphi_2 \circ \hat\varphi_1$.
  \end{algorithmic}
 \end{algorithm}

 \begin{theorem}[GRH]\label{thm:PathToMax}
Algorithm~\ref{alg:PathToMax} is correct and runs in expected polynomial time in $\log p$, $\ell$ and in the length of the two provided $\MaxOrder$-solutions $\cO_1$ and $\cO_2$.
 \end{theorem}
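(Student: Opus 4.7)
The plan is to follow Algorithm~\ref{alg:PathToMax} step by step, confirming that each subroutine's hypotheses are satisfied and that the composed output is a valid $\ell$-isogeny path from $E_1$ to $E_2$ (possibly after matching endpoint isomorphisms). My strategy is to use the Deuring correspondence to transfer the task from elliptic curves to left $\cO_0$-ideals, solve the path problem rigorously on the quaternion side via Theorem~\ref{thm:rigorousklpt}, and then translate back to isogenies via Lemma~\ref{lem:idealtoisogenypower}.

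For \emph{correctness}, I would argue as follows. The connecting ideal $I_i = I(\cO_0, \cO_i)$ has $\cO_L(I_i) = \cO_0$ and $\cO_R(I_i) \cong \cO_i \cong \End(E_i)$, so its class corresponds to some isogeny $E_0 \to E_i$ (up to isomorphism and the $\Gal(\F_{p^2}/\F_p)$-ambiguity of the Deuring correspondence). The equivalent ideal $J_i$ returned by $\EquivIdeal_c$ has norm $\ell^{2e}$ or $\ell^{2e+1}$, so Lemma~\ref{lem:idealtoisogenypower} (Algorithm~\ref{alg:idealtoisogeny}) yields a chain of $\ell$-isogenies $\varphi_i : E_0 \to E_i'$ with $j(E_i') = j(E_i)$. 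The composition $\varphi_2 \circ \hat\varphi_1$ is then an $\ell$-isogeny path between curves $\overline{\F_{p^2}}$-isomorphic to $E_1$ and $E_2$, which I would convert into a path with the specified endpoints by prepending/appending explicit isomorphisms (and, if required, the Frobenius to resolve the Galois ambiguity); these adjustments are elementary and polynomial-time in $\log p$.

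For the \emph{hypotheses of Theorem~\ref{thm:rigorousklpt}}, I would check that with $n_1 = n_2 = \ell^e$ and $e = \lceil (\log p)^c/\log\ell\rceil$: the size bound $\log n_j \geq (\log p)^c$ holds by construction; the entropy bound is trivial because $\omega(n_2)=1$, giving $\log n_2 \geq 1 = \omega(n_2)^c$; and the parity constraint $n_2\ell^{e'} \not\equiv 2,4 \bmod 8$ for $e'\in\{0,1\}$ holds when $\ell$ is odd (the product is odd) and when $\ell=2$ with $e\geq 3$ (the product is divisible by $8$), which I can enforce by taking $c$ large enough. For the \emph{running time}, the connecting ideal $I(\cO_0,\cO_i)$ is computed in polynomial time by \cite[Algorithm~3.5]{KV10} with $\log \Nrd(I_i)$ polynomially bounded in the lengths of $\cO_1,\cO_2$; Theorem~\ref{thm:rigorousklpt} runs in expected polynomial time in $\log p$, $\log n_i = O((\log p)^c)$, $\log\Nrd(I_i)$ and $\ell$; and Lemma~\ref{lem:idealtoisogenypower} converts each $J_i$ in expected polynomial time in $\log p$, $\ell$, and $e=O((\log p)^c/\log\ell)$. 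The output is a chain of $O((\log p)^c)$ individual $\ell$-isogenies, each of polynomial representation size.

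The main obstacle is not any individual step but rather choosing $(n_1,n_2)$ to simultaneously satisfy all the constraints of Theorem~\ref{thm:rigorousklpt}. The clean choice of pure powers of $\ell$ circumvents the ``few prime factors or large prime factors'' dichotomy by exploiting $\omega(\ell^e)=1$, at the cost of slightly longer output paths than the heuristic version of \cite{KLPT14} would produce. The remaining work is a careful accounting of endpoint isomorphisms and subroutine running times.
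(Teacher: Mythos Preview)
Your proposal is correct and follows the same approach as the paper's proof, which simply notes that the reduction mirrors \cite[Algorithm~7]{EHLMP18} with the heuristic \cite{KLPT14} replaced by Theorems~\ref{thm:rigorousklpt} and~\ref{thm:powersmoothclassrep}. Your version is in fact more thorough: you explicitly verify the hypotheses of Theorem~\ref{thm:rigorousklpt} for the choice $n_1=n_2=\ell^e$ (in particular the $\omega(n_2)=1$ trick and the $\bmod\ 8$ constraint), and you acknowledge the $\Gal(\F_{p^2}/\F_p)$-ambiguity in the Deuring correspondence, both of which the paper leaves implicit.
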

 
 \begin{proof}
The reduction is almost the same as~\cite[Algorithm~7]{EHLMP18}, but using Theorems~\ref{thm:rigorousklpt} and~\ref{thm:powersmoothclassrep} instead of~\cite{KLPT14}. Note that we reduce to $\MaxOrder$ whereas~\cite[Algorithm~7]{EHLMP18} reduces to $\EndRing$. However, in~\cite[Algorithm~7]{EHLMP18}, the algorithm solving $\EndRing$ is only used to recover the maximal orders $\cO_1$ and $\cO_2$ via the reduction from $\MaxOrder$ to $\EndRing$. We simply short-circuit the chain of reductions.
 \end{proof}

%


\section{Endomorphism Ring is equivalent to Maximal Order}\label{sec:MOequivEnd}
\noindent We finally prove the equivalence between $\EndRing$ and $\MaxOrder$.

\subsection{Endomorphism Ring reduces to Maximal Order} We start with the simplest direction, which can readily be adapted from previous heuristic reductions with our new tools.

\begin{algorithm}[h]
 \caption{Reducing $\EndRing$ to $\MaxOrder$, with a parameter $\delta > 0$}\label{alg:EndToMO}
 \begin{algorithmic}[1]
 \REQUIRE {A supersingular elliptic curve $E/\F_{p^2}$, with $p \neq \ell$. We suppose we are given 
 the $\MaxOrder$-solution $\cO$, a maximal order in $B_{p,\infty}$ isomorphic to $\End(E)$. 
 }
 \ENSURE {Four endomorphisms of $E$ that generate $\End(E)$.}

\STATE $I \gets I(\cO_0,\cO)$, the ideal connecting $\cO_0$ to $\cO$; \COMMENT{\cite[Algorithm~3.5]{KV10}}
\STATE $c \gets$ the constant from Theorem~\ref{thm:powersmoothclassrep};
\STATE $J \gets $ an ideal equivalent to $I$, with $(\log p)^c$-powersmooth norm; \COMMENT{Theorem~\ref{thm:powersmoothclassrep}}
\STATE $\cO' \gets \cO_R(J)$ the right-order of $J$; 
\COMMENT{\cite[Theorem~3.2]{Ron92}}
\STATE $(\beta_i)_{i=1}^4 \gets$ a basis of $\cO'$;
\STATE $(\alpha_i)_{i=1}^4,(\phi_i)_{i=1}^4 \gets $ the special basis $(\alpha_i)_{i=1}^4$ of $\cO_0$ from Lemma~\ref{lem:specialorders}, with the corresponding endomorphisms $\phi_i \in \End(E_0)$ from Lemma~\ref{lem:constructingE0};
\STATE $(c_{ij})_{i,j=1}^4 \gets$ integers such that $\Nrd(J)\beta_i = \sum_{j=1}^4c_{ij}\alpha_j$ for $i = 1,\dots,4$;
 \STATE $\varphi \gets $ the isogeny corresponding to $J$; \COMMENT{Lemma~\ref{lem:idealtoisogeny}} 
\RETURN $(N,\varphi,(c_{ij})_{i,j})$, which represents the endomorphisms $\frac 1 N \sum_{j=1}^4c_{ij} \varphi \phi_j \hat \varphi$.
  \end{algorithmic}
 \end{algorithm}

\begin{theorem}[GRH]\label{thm:EndToMO}
Algorithm~\ref{alg:EndToMO} is correct and runs in expected polynomial time in $\log p$ and in the length of the provided $\MaxOrder$-solution $\cO$.
\end{theorem}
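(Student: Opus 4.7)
The plan is to verify correctness via the Deuring correspondence and bound the running time using the subroutines cited in the algorithm. Correctness first: since $J$ is a left $\cO_0$-ideal equivalent to the connecting ideal $I(\cO_0,\cO)$, its right order $\cO' = \cO_R(J)$ satisfies $\cO' \cong \cO \cong \End(E)$. The isogeny $\varphi = \varphi_J : E_0 \to E_1$ produced by Lemma~\ref{lem:idealtoisogeny} has target $E_1$ with $\End(E_1) \cong \cO'$, so (up to the Galois conjugate $E^{(p)}$, detected and corrected by checking $j$-invariants and composing with Frobenius if needed) $E_1$ is isomorphic to $E$ and a basis of $\End(E_1)$ yields a basis of $\End(E)$.

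The Deuring correspondence provides an explicit isomorphism $\cO_R(J) \xrightarrow{\sim} \End(E_1)$ sending $\beta \in \cO_R(J) \subset B_{p,\infty}$ to the endomorphism $\varphi\beta\hat\varphi/N$, where $N = \Nrd(J)$ and $\beta$ is interpreted through the identification $B_{p,\infty} = \End(E_0)\otimes\Q$. Concretely, if $N\beta = \sum_j c_j \alpha_j \in \cO_0$ with $\alpha_j \leftrightarrow \phi_j \in \End(E_0)$, this endomorphism equals $\frac{1}{N}\sum_j c_j\,\varphi\phi_j\hat\varphi$. The expression a priori lies in $\End(E_1)\otimes\Q$, but it is an actual (integral) endomorphism precisely because $\beta \in \cO_R(J)$, as recalled for instance in~\cite[Chapter~42]{VoightBook}. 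Applying this to the basis $(\beta_i)_{i=1}^4$ of $\cO'$ produces a $\Z$-basis of $\End(E)$, whose data is exactly the triple $(N,\varphi,(c_{ij}))$ returned by the algorithm.

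The main obstacle is to verify that this output is an \emph{efficient representation} in the sense defined in the introduction: one must evaluate each output endomorphism at any $P \in E(\F_{p^k})$ in time polynomial in the representation size and $k\log p$. The isogeny $\varphi$ is returned by Lemma~\ref{lem:idealtoisogeny} as a chain of $\ell_s$-isogenies of powersmooth total degree $N$, with each $\ell_s \leq (\log p)^c$, so both $\varphi$ and $\hat\varphi$ evaluate pointwise in polynomial time; the $\phi_j$ do likewise by Lemma~\ref{lem:constructingE0}. The subtle step is the final division by $N$: since $N$ is $(\log p)^c$-powersmooth and the resulting point is a priori integral, one can divide by each prime power $\ell_s^{e_s}\mid N$ separately by computing preimages under $[\ell_s]$ via the $\ell_s$-division polynomial (of degree $O(\ell_s^2)$), possibly working over a small extension of $\F_{p^k}$. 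This runs in polynomial time in $\log p$, $k$, and the representation length. With this in place, all other steps of Algorithm~\ref{alg:EndToMO} are polynomial-time by the cited results—the connecting ideal via~\cite[Algorithm~3.5]{KV10}, the powersmooth equivalent via Theorem~\ref{thm:powersmoothclassrep}, the right order via~\cite[Theorem~3.2]{Ron92}, the change-of-basis $(c_{ij})$ by linear algebra, and the isogeny $\varphi$ via Lemma~\ref{lem:idealtoisogeny}—yielding an expected polynomial running time in $\log p$ and in the length of $\cO$.
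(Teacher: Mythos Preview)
Your argument is correct and follows the same route as the paper, which simply cites \cite[Lemma~3]{EHLMP18} for the fact that $(N,\varphi,(c_{ij}))$ is an efficient representation; you spell out those details and also flag the Galois-conjugate ambiguity $E_1 \in \{E,E^{(p)}\}$, which the paper leaves implicit.

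One small technical point in your division-by-$N$ step: as written you compute $Q = \sum_j c_{ij}\,\varphi\phi_j\hat\varphi(P)$ and then divide $Q$ by $N$, but $[N]^{-1}(Q)$ has $N^2$ elements and you give no rule for selecting the correct one. The standard fix (and what \cite[Lemma~3]{EHLMP18} does) is to reverse the order: first find any $P'$ with $[N]P' = P$, using powersmoothness and division polynomials exactly as you describe, and then return $\sum_j c_{ij}\,\varphi\phi_j\hat\varphi(P')$. This is well-defined independently of the choice of $P'$, since $\sum_j c_{ij}\,\varphi\phi_j\hat\varphi = [N]\circ\theta_i$ annihilates $E_1[N]$.
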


\begin{proof}
The algorithm is the same as~\cite[Algorithm~4]{EHLMP18}, but using Theorem~\ref{thm:rigorousklpt} instead of~\cite{KLPT14}. In particular, it is proven in~\cite[Lemma~3]{EHLMP18} that $(N,\varphi,(c_{ij})_{i,j})$ is an efficient representation of the basis.
\end{proof}

\subsection{Maximal Order reduces to Endomorphism Ring}
Finally, we prove that $\MaxOrder$ reduces to $\EndRing$.
The most delicate issue is that the corresponding heuristic reduction \cite[Algorithm~6]{EHLMP18} requires the factorisation of large integers, a task that in the worst case cannot be solved in polynomial time (to the best or our knowledge). We modify the reduction to provably avoid all hard factorisations. To do so, we force the corresponding integers to be prime, by leveraging Proposition~\ref{prop:samplingprimegeneral} and an explicit parameterisation of solutions of quadratic forms.
We start with a lemma, and introducing some handy notation.
\begin{lemma}\label{lem:evalscalarproduct}
Given two endomorphisms $\alpha$ and $\beta$ in an efficient representation, one can compute $\langle \alpha, \beta\rangle$ 
in time polynomial in the length of the representation of $\alpha$ and $\beta$, and in $\log p$.
\end{lemma}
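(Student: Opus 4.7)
The plan is to apply the polarisation identity
$$\langle \alpha, \beta\rangle = \tfrac{1}{2}\left(\Nrd(\alpha+\beta) - \Nrd(\alpha) - \Nrd(\beta)\right),$$
together with the fact that for an endomorphism $\gamma$ the reduced norm coincides with the degree, $\Nrd(\gamma) = \deg(\gamma)$. The sum $\alpha + \beta$ inherits an efficient representation from those of $\alpha$ and $\beta$ by pointwise evaluation $(\alpha+\beta)(P) = \alpha(P) + \beta(P)$, so the task reduces to computing $\deg(\gamma)$ for an arbitrary endomorphism $\gamma$ given in efficient representation.

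Writing $L$ for the total bit-length of the inputs, the standing assumption $\length(\gamma) = \Omega(\log\deg(\gamma))$ bounds $\deg(\alpha)$ and $\deg(\beta)$ by $2^{O(L)}$, and Cauchy--Schwarz for the form $\Nrd$ yields $\deg(\alpha + \beta) \leq (\sqrt{\deg\alpha} + \sqrt{\deg\beta})^2 = 2^{O(L)}$ as well. I would therefore compute $\deg(\gamma)$ in the spirit of Schoof's algorithm: recover $\deg(\gamma) \bmod \ell$ for every prime $\ell \neq p$ up to some bound $B$, and reconstruct via the Chinese remainder theorem. By the prime number theorem, choosing $B = O(L)$ makes the product of the moduli exceed $2^{O(L)}$, hence exceed $\max \deg(\gamma)$.

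For each such $\ell$, I would construct a basis $(P,Q)$ of $E[\ell]$: since $E[\ell]$ is defined over an extension $\F_{p^{2k}}$ with $k$ polynomial in $\ell$ (the order of the Frobenius action on $E[\ell]$), one samples random points in $E(\F_{p^{2k}})$ and multiplies by the cofactor of $\#E(\F_{p^{2k}})$, a cardinality determined by the trace of Frobenius on the supersingular curve $E$ (known a priori). Next, evaluate $\gamma(P)$ and $\gamma(Q)$ via the efficient representation of $\gamma$, at cost polynomial in $k\log p$ and $L$. The images lie in $E[\ell]$, so one may write
$$\gamma(P) = aP + bQ, \qquad \gamma(Q) = cP + dQ,$$
with $a,b,c,d \in \Z/\ell\Z$. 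These coefficients are recovered by discrete logarithms in a cyclic group of order $\ell$ (either directly in $E[\ell]$ by baby-step giant-step, or via Weil pairings and discrete logs in $\mu_\ell \subset \F_{p^{2k}}^\times$), in time $\tilde O(\sqrt\ell)$. Then $\deg(\gamma) \equiv ad - bc \bmod \ell$.

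Substituting the three computed degrees into the polarisation identity yields $\langle \alpha, \beta\rangle$. The main routine point to verify is that every subroutine---arithmetic in extensions of degree $O(\ell^2)$, the $\tilde O(\sqrt\ell)$ discrete logarithms, and the $O(L/\log L)$ torsion-basis constructions and evaluations of $\gamma$---stays polynomial in $L$ and $\log p$. No deeper obstacle arises, since the only primitive needed from $\gamma$, namely evaluation at points defined over small-degree extensions of $\F_{p^2}$, is exactly what an efficient representation provides.
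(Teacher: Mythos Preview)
Your proposal is correct and follows essentially the same approach as the reference the paper cites, namely \cite[Lemma~4]{EHLMP18}: reduce to computing $\deg(\gamma)$ for $\gamma\in\{\alpha,\beta,\alpha+\beta\}$ via the polarisation identity, then recover each degree by a Schoof-style CRT over small primes $\ell$, reading off $\deg(\gamma)\bmod\ell$ from the action of $\gamma$ on a basis of $E[\ell]$. The paper itself gives no independent argument and simply defers to that reference, so your write-up is in fact more explicit than the paper's proof.
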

\begin{proof}
This is proven in \cite[Lemma~4]{EHLMP18}. Recall that an efficient representation means that there is an algorithm that evaluates $\alpha(P)$ for any $P \in E(\F_{p^k})$ in time polynomial in the length of the representation of $\alpha$ and in $k\log p$. Also, the length of an efficient representation of $\alpha$ is $\Omega(\log (\deg(\alpha)))$ (which rules out exotic representations where the number of bits of $\langle \alpha, \beta\rangle$ would be exponential in the length of the input).
\end{proof}

\begin{notation}
Given two quadratic forms $f$ and $g$, we write $f\orthsum g$ their orthogonal sum, defined as
$$(f\orthsum g)(x,y) = f(x) + g(y).$$
We extend this notation naturally to $U \orthsum V$ or $G\orthsum H$ for quadratic spaces $U$ and $V$ or Gram matrices $G$ and $H$.
\end{notation}

\begin{notation}
We write $\langle a_1,\dots,a_r\rangle$ the quadratic form whose Gram matrix is $\diag(a_1,\dots,a_r)$.
\end{notation}

\begin{algorithm}[h]
 \caption{Reducing $\MaxOrder$ to $\EndRing$}\label{alg:MOToEnd}
 \begin{algorithmic}[1]
 \REQUIRE {A supersingular elliptic curve $E/\F_{p^2}$. 
 We suppose we are given the $\EndRing$-solution $(\beta_i)_{i=1}^4$, a list of four endomorphisms that generate $\End(E)$.
 }
 \ENSURE {A basis of an order in $B_{p,\infty}$ isomorphic to $\End(E)$.}
\STATE \label{step:MOToEnd:2} $G_0 \gets (\langle \beta_i,\beta_j \rangle)_{i,j=1}^4$ the Gram matrix of $(\beta_i)_{i=1}^4$;
\STATE \label{step:MOToEnd:3} \label{step:MOToEnd:3} Find a change of basis such that $A^tG_0A = \langle 1\rangle \orthsum G$, where $G$ is integral and $\disc(G)$ is only divisible by $p$ and $2$;
\STATE \label{step:MOToEnd:4} Solve $x^t G x=q(a\ell)^2$, where $x \in \Z^3$ is primitive, $\ell$ is prime (or $\ell = 1$) and $a$ may only be divisible by the primes $2$ and $p$;
\STATE \label{step:MOToEnd:5} Find a change of basis $B = \langle 1 \rangle \orthsum B'$ such that $B^t (\langle 1\rangle \orthsum G) B = \langle 1,q \rangle \orthsum H$, where $H$ is integral, and $\disc(H)$ is only divisible by $2, p, q$ and $\ell$.
\STATE \label{step:MOToEnd:6} Solve $y^tHy = p$ with $y \in \Q^2$;
\STATE \label{step:MOToEnd:7} $\iota \gets \left(A\left(0, \frac{x}{a\ell}\right)\right)^t (\beta_i)_{i=1}^4$;
\STATE \label{step:MOToEnd:8} $\pi \gets \left(AB (0, 0, y)\right)^t(\beta_i)_{i=1}^4$;
\STATE \label{step:MOToEnd:9} $\kappa \gets \iota\circ \pi$;
\STATE \label{step:MOToEnd:10} $\Phi : \End(E) \otimes \Q \rightarrow B_{p,\infty}$, the isomorphism sending $1,\iota,\pi,\kappa$ to $1,i,j,ij$;
\RETURN \label{step:MOToEnd:11} $(\Phi(\beta_i))_{i = 1}^4$.
  \end{algorithmic}
 \end{algorithm}

 \begin{theorem}[GRH]\label{thm:MOToEnd}
Algorithm~\ref{alg:MOToEnd} is correct and runs in expected polynomial time in $\log p$, and in the length of the provided $\EndRing$-solution $(\beta_i)_{i=1}^4$.
 \end{theorem}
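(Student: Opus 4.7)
The plan is to verify correctness and expected polynomial running time of each step of Algorithm~\ref{alg:MOToEnd} in order, the overarching challenge being to ensure that every large integer produced factors over an explicitly known small set of primes---the main obstacle that would otherwise require a general factoring oracle. Steps~\ref{step:MOToEnd:2} and~\ref{step:MOToEnd:3} are straightforward linear algebra: the Gram matrix is computed via Lemma~\ref{lem:evalscalarproduct}; then, after recovering $1 \in \End(E)$ as a $\Z$-combination of $(\beta_i)$ (its coordinates are obtained by inverting $G_0$ against the trace vector read off from $G_0$) and extending to a $\Z$-basis, integral orthogonalisation against the norm-$1$ vector $1$ produces $A$ with $A^t G_0 A = \langle 1 \rangle \orthsum G$. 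Since $\disc(\End(E)) = p^2$, the resulting $\disc(G)$ is supported on $\{2,p\}$.

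The crucial step is Step~\ref{step:MOToEnd:4}: find a vector $x$ such that $x^tGx = q(a\ell)^2$ with $\ell$ prime (or $\ell=1$), $q$ prime, and $a$ supported on $\{2,p\}$. The plan is to first remove from $G$ its content (a divisor of a power of $2p$), then apply the rank-$3$ case of Proposition~\ref{prop:samplingprimegeneral} to the resulting primitive ternary form to obtain, in expected polynomial time under GRH, a primitive vector realising a prime value, and to scale back. The whole reduction depends critically on this prime-sampling technology of Section~\ref{sec:qfprimesampling}: without the primality of $\ell$ and $q$, we could not control the prime factorisation of the data passed to the subsequent steps. In Step~\ref{step:MOToEnd:5}, setting $\iota = A(0,x/(a\ell))^t (\beta_i)$ yields a trace-zero element of reduced norm $q$, and rational orthogonalisation of the two remaining coordinates against $\iota$ (followed by clearing the denominator $q$) produces an integral $B$ with $B^t(\langle 1 \rangle \orthsum G)B = \langle 1,q\rangle \orthsum H$; tracking the primes introduced by this rescaling confirms that $\disc(H)$ is supported on $\{2,p,q,\ell\}$.

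With the factorisation of $\disc(H)$ in hand, Step~\ref{step:MOToEnd:6} reduces to Cornacchia's algorithm after diagonalising $H$, solvable in polynomial time; the equation $y^tHy = p$ is solvable over $\Q$ because $\End(E)\otimes \Q \cong B_{p,\infty}$ contains trace-zero elements of reduced norm $p$, and a rational multiple of any such element lies in the two-dimensional subspace orthogonal to $\iota$ (as $B_{p,\infty}$ is a division algebra). Finally, the triple $(\iota,\pi,\kappa)$ of Steps~\ref{step:MOToEnd:7}--\ref{step:MOToEnd:9} satisfies $\iota^2 = -q$, $\pi^2 = -p$, and $\iota\pi = -\pi\iota$---the anticommutation follows from $\Trd(\iota) = \Trd(\pi) = 0$ together with the orthogonality $\langle \iota,\pi\rangle = 0$ built into Step~\ref{step:MOToEnd:8}---so $\Phi$ of Step~\ref{step:MOToEnd:10} is a well-defined $\Q$-algebra isomorphism onto $B_{p,\infty} = \left(\frac{-q,-p}{\Q}\right)$, and the $\Phi(\beta_i)$ generate an order isomorphic to $\End(E)$.
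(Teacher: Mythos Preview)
Your handling of Steps~\ref{step:MOToEnd:2}, \ref{step:MOToEnd:3}, \ref{step:MOToEnd:5}--\ref{step:MOToEnd:11} is essentially in line with the paper. The gap is in Step~\ref{step:MOToEnd:4}.

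You propose to divide $G$ by its content and apply Proposition~\ref{prop:samplingprimegeneral} directly, obtaining a primitive $x$ with $G(x) = c\ell$ where $c$ is the content (supported on $\{2,p\}$) and $\ell$ is prime. But the equation to be solved is $x^tGx = q\,(a\ell)^2$: the value must be $q$ times a \emph{perfect square} of known factorisation, because in Step~\ref{step:MOToEnd:7} one divides by $a\ell$ to produce an element $\iota$ of reduced norm exactly $q$. A value of the shape $c\ell$ with $\ell$ prime is almost never of this form (in particular $q$ need not divide the content of $G$, and even when $q=1$ there is no reason for $c\ell$ to be a square). So your plan does not produce a valid input for the subsequent steps.

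The paper's approach to Step~\ref{step:MOToEnd:4} is more indirect. Since $G \simeq_{\Q} \langle q,p,qp\rangle$, the form $Q = G \orthsum \langle -q\rangle$ is isotropic, and because $\disc(Q)$ factors over $\{2,p,q\}$ one can find an isotropic vector $(x_0,\ell_0)$ via~\cite{Simon06}, giving an initial solution $x_0^tGx_0 = q\ell_0^2$ with $\ell_0$ of unknown factorisation. One then parametrises \emph{all} rational solutions of $X^tQX=0$ (as in \cite[Proposition~6.3.2]{Cohen08}); the last coordinate of the general solution is itself a ternary quadratic form $g$ in the parameters, shown to be primitive away from $\{2,p\}$. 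It is to this auxiliary form $g$, not to $G$, that Proposition~\ref{prop:samplingprimegeneral} is applied, yielding a parameter value with $g = a\ell$ prime up to a $\{2,p\}$-unit, hence a solution $x^tGx = q(a\ell)^2$ of the required shape. The prime-sampling technology is indeed the key ingredient, but it must be deployed on the parametrising form, not on $G$ itself.
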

\begin{proof}
Let us go through the reduction step by step.\\

\noindent\emph{Step~\ref{step:MOToEnd:2}.} The Gram matrix $G_0$ of $(\beta_1,\beta_2,\beta_3,\beta_4)$ can be computed via Lemma~\ref{lem:evalscalarproduct}. \\

\noindent\emph{Step~\ref{step:MOToEnd:3}.}  First recall that $\disc(G_0) = \disc(\End(E)) = p^2$. This step follows from the fact that the endomorphisms $(2\beta_i - \tr(\beta_i))_{i=1}^4$ generate the (rank $3$) orthogonal complement of $1$ in $\Z + 2\End(E)$ (an order of discriminant only divisible by $p$ and $2$).
\\ 

\noindent\emph{Step~\ref{step:MOToEnd:4}.} This step calls for more extensive explanations. 
First note that the norm form on $B_{p,\infty}$ is $\Q$-equivalent to $\langle 1,q,p,qp\rangle$, so by the cancellation theorem, $G \simeq_{\Q} \langle q,p,qp\rangle$.
Let $Q = G \orthsum \langle -q \rangle$. The factorisation of $\disc(G)$ (hence $\disc(Q)$) being known, we can find a solution $X_0^t Q X_0=0$  with $X_0 = (x_0,\ell_0)$,  where $x_0 \in \Z^3$ is primitive and $\ell_0 \in \Z_{>0}$ using~\cite{Simon06}.
Yet, $\ell_0$ is not necessarily prime. From~\cite[Proposition~6.3.2]{Cohen08}, the general solution $X$ is given by
$$X  = d((R^tQR)X_0 - 2(R^tQX_0)R),$$
for arbitrary $R \in \Q^4$ and $d \in \Q^*$. Fix $d = 1$. Write $R = (r_x,r_\ell)$ with $r_x \in \Z^3$ and $r_\ell \in \Z$. The last coordinate of $X$ is given by the integral quadratic form
$$r_x^t G r_x \ell_0 - 2 r_x^tGx_0r_\ell + q\ell_0 r_\ell^2 = \frac{(r_x\ell_0 - x_0r_\ell)^tG(r_x\ell_0 - x_0r_\ell)}{\ell_0}.$$
It is of rank $3$, so let $M \in M_{3\times 3}(\Z)$ be a matrix whose columns generate $\Lambda = \ell_0\Z^3 + x_0 \Z$, and
$$g(z) = \frac{z^t(M^tGM)z}{\ell_0}.$$
It is positive definite, since $G$ is and $\ell_0 > 0$. 
Let us show that $g$ is (almost) primitive. If $s$ is a prime that does not divide $\ell_0$, both $M$ and $\ell_0$ are invertible modulo $s$, so $g$ is primitive at $s$ because $G$ is. Now suppose $s \mid \ell_0$. Then, writing $Mz = r_x\ell_0 - x_0r_\ell$, we have
\begin{align*}
g(z) \equiv  - 2 r_x^tGx_0r_\ell \bmod s.
\end{align*}
Therefore, if $s \neq 2$ and $Gx_0 \not\equiv 0 \mod s$, then $g$ is primitive at $s$.
If $Gx_0 \equiv 0 \mod s$, since $x_0$ is primitive, $s$ must divide $\disc(G)$, so $s$ is $2$ or $p$.
This proves that the only primes where $g$ might not be primitive are $2$ and $p$.
We can then write $g = g'/a$ where $g'$ is primitive and $a$ may only be divisible by the primes $2$ and $p$. Applying Proposition~\ref{prop:samplingprimegeneral}, we can find in polynomial time a $z$ such that $\ell = g'(z)$ is prime, hence a solution  of the form $x^t G x=q(a\ell)^2$. In this solution, we can assume that $x$ is primitive: if $c$ divides the content of $x$, then $c^2$ divides $q(a\ell)^2$, so $c$ divides $a\ell$.\\

\noindent\emph{Step~\ref{step:MOToEnd:5}.} We are looking for $B'$ such that $(B')^tGB' = \langle q \rangle \orthsum H$. Let $ (x \mid \Gamma)$ be a unimodular integral matrix with first column equal to $x$ (it can be found because $x$ is primitive). Let
$$P = I_3 - \left(\begin{matrix}
\frac{ e_1^t G x }{x^t G x} x \mid \frac{ e_2^t G x }{x^t G x} x \mid \frac{ e_3^t G x }{x^t G x} x\\
\end{matrix}\right)$$
be the $3\times 3$ matrix projecting orthogonally along $x$. With $B' = (x/(a\ell) \mid (x^t G x)P\Gamma)$, we obtain $(B')^tGB'$ of the desired form.\\

\noindent\emph{Step~\ref{step:MOToEnd:6}.} It can be solved efficiently with~\cite{Simon05} since the factorisation of $\disc(H)$ is known.\\

\noindent\emph{Steps~\ref{step:MOToEnd:7} to~\ref{step:MOToEnd:10}.} In $\End(E) \otimes \Q$, we have $\Nrd(\iota) = q$ and $\tr(\iota) = 0$ so $\iota^2 = -q$. Similarly, $\pi^2 = - p$. Therefore $\Phi$ is indeed an isomorphism. \\

\noindent\emph{Step~\ref{step:MOToEnd:11}.} 
All we need to do is express each $\beta_i$ in the basis $1,\iota,\pi,\kappa$ (allowing to evaluate $\Phi(\beta_i)$). We already know how to express  $1,\iota,\pi$ in terms of $(\beta_i)_{i = 1}^4$; if we can also express $\kappa$, then we obtain a change of basis between $1,\iota,\pi,\kappa$ and $(\beta_i)_{i = 1}^4$ and we are done.
Without loss of generality, $\beta_4$ is not in $\mathrm{span}(1,\iota,\pi)$. Let
$$\gamma = \beta_4 - \langle\beta_4, 1\rangle - \langle\beta_4, \iota\rangle \iota - \langle\beta_4, \pi\rangle \pi.$$
Then, $\gamma$ is orthogonal to $\mathrm{span}(1,\iota,\pi)$, so it belongs to $\mathrm{span}(\kappa)$. Renormalising, we obtain $\kappa$ as a combination of $(\beta_i)_{i = 1}^4$.
\end{proof}

\section{
Acknowledgements}
\noindent
The author wishes to thank Corentin Perret-Gentil and L\'eo Ducas for their help and feedback on several aspects of this work. This work was supported by the Agence Nationale de la Recherche under grants ANR MELODIA (ANR-20-CE40-0013) and ANR CIAO (ANR-19-CE48-0008).

\bibliographystyle{alpha}
\bibliography{bib}

\end{document}